\newtheorem{lemma}{Lemma}[section]
\newtheorem{corollary}[lemma]{Corollary}
\newtheorem{theorem}[lemma]{Theorem}
\newtheorem{proposition}[lemma]{Proposition}
\newtheorem{remark}[lemma]{Remark}
\newtheorem{definition}[lemma]{Definition}
\def\Sn{\operatorname{Sn}}
\def\Cs{\operatorname{Cs}}
\newcommand{\R}{\mathbb{R}}
\newcommand{\C}{\mathbb{C}}
\newcommand{\Q}{\mathbb{Q}}
\newcommand{\N}{\mathbb{N}}
\author[J. D. Garc\'{\i}a-Salda\~{n}a]{J. D.
Garc\'{\i}a-Salda\~{n}a$^{(1)}$}
\author[A. Gasull]{A. Gasull$^{(2)}$}
\author[H. Giacomini]{H. Giacomini$^{(3)}$}
\date{}
\dedicatory{} \commby{}
\begin{document}

\title[Bifurcation values ]
{Bifurcation values for a family\\ of planar vector fields of degree five}

\maketitle


\vspace{0.5cm}

\subsection*{Abstract} We study the number of limit cycles and the
bifurcation diagram in the Poincar\'{e} sphere of a one-parameter family of planar
differential equations of degree five $\dot {\bf x}=X_b({\bf x})$ which has been
already considered in previous papers. We prove that there is a value $b^*>0$ such
that the limit cycle exists only when $b\in(0,b^*)$ and that it is unique and
hyperbolic  by using a rational Dulac function. Moreover we provide an interval of
length $27/1000$ where~$b^*$ lies. As far as we know the tools used to determine
this interval are new and are based on the construction of algebraic curves
without contact for the flow of the differential equation. These curves are
obtained using analytic information about the separatrices of the infinite
critical points of the vector field. To prove that the Bendixson-Dulac Theorem
works we develop a method for studying whether one-parameter families of
polynomials in two variables  do not vanish based on the computation of the so
called double discriminant.

\vspace{0.5cm}

\subsection*{Keywords} Polynomial planar system,
 Uniqueness of limit cycles, Bifurcation, Dulac function, Double
discriminant.

\vspace{0.5cm}

\subsection*{2010 Mathematics Subject Classification}{Primary: 34C07; Secondary: 34C23, 34C25, 37C27, 13P15}

\vspace{2cm}

\begin{center}
\small{$^{(1)}$ Departament  de Matem\`{a}tiques,  Universitat Aut\`{o}noma de Barcelona \\
Edifici C. 08193 Bellaterra, Barcelona. Spain.\, email:
\texttt{johanna@mat.uab.cat}\\} \vspace{0.2cm}
\small{$^{(2)}$ {\bf Corresponding author:}\\ Departament de Matem\`{a}tiques,  Universitat Aut\`{o}noma de Barcelona\\
Edifici C. 08193 Bellaterra, Barcelona. Spain.\\ Phone: (34)\, 935812909\, Fax
number: (34)\, 935812790,\, email: \texttt{gasull@mat.uab.cat}\\} \vspace{0.2cm}
\small{$^{(3)}$ Laboratoire de Math\'{e}matiques et Physique Th\'{e}orique.\\ Facult\'{e} des
Sciences et Techniques. Universit\'{e} de Tours. CNRS-UMR 7350.\\ 37200 Tours.
France.\, email: \texttt{Hector.Giacomini@lmpt.univ-tours.fr}\\}
\end{center}

\newpage

\section{Introduction and main results}

Consider the one-parameter family of quintic differential systems
\begin{equation}\label{sisa}
\left\{\begin{array}{lll}
\dot{x}&=&y,\\
\dot{y}&=&-x+(a-x^2)(y+y^3),\quad a\in\mathbb{R}.
\end{array}\right.
\end{equation}
Notice that without the term $y^3$, \eqref{sisa} coincides with the
famous van der Pol system. This family was studied in \cite{Xian}
and the authors concluded that it has only two bifurcation values,
$0$ and $a^*$, and exactly four different global phase portraits on
the Poincar\'e disc. Moreover, they concluded that there exists
$a^*\in(0, \sqrt[3]{9\pi^2/16})\approx(0, 1.77)$, such that  the
system has limit cycles only when $0<a< a^*$ and then if the limit
cycle exists, is unique and hyperbolic. Later, it was pointed out in
\cite{Han-Qian} that the proof of the uniqueness of the limit cycle
had a gap  and a new proof was presented.

System \eqref{sisa} has no periodic orbits when $a\le0$  because in
this case  the function $x^2+y^2$ is a global Lyapunov function.
Thus, from now on, we restrict our attention to the case $a>0$ and
for convenience we write $a=b^{2}$, with $b>0$. That is, we consider
the system
\begin{equation}\label{sisb}
\left\{\begin{array}{lll}
\dot{x}=y,\\
\dot{y}=-x+(b^2-x^2)(y+y^3), \quad b\in\mathbb{R}^+\cup\{0\}.
\end{array}\right.
\end{equation}

Therefore the above family has limit cycles if and only if
$b\in(0,b^*)$ with $b^*=\sqrt{a^*}$ and
$b^*\in(0,\sqrt[6]{9\pi^2/16})\approx(0, 1.33).$ Following
\cite{Xian} we also know that the value $b=0$ corresponds to a Hopf
bifurcation and the value $b^*$ to the disappearance of the limit
cycle in an unbounded polycycle. By using numerical methods it is
not difficult to approach  the value $b^*$. Nevertheless, as far as
we know there are no analytical tools to obtain the value $b^*.$
This is the main goal of this paper.

We have succeed in finding an interval of length $0.027$ containing $b^*$ and
during our study we have also realized that there was a bifurcation value missed
in the previous studies. Our main result is:

\begin{theorem}\label{mteo}
Consider system \eqref{sisb}. Then there exist two positive numbers $\hat b$ and
$b^*$ such that:
\begin{itemize}
\item[$($a$)$]  It has a limit cycle if
and only if $0<b<b^*$. Moreover, when it exists, it is unique, hyperbolic and
stable.
\item[$($b$)$]  The only  bifurcation values of the system are 0,${\hat{b}}$ and
${b^*}$. In consequence there are exactly six different  global phase portraits on
the Poincar\'e disc, which are the ones showed in Figure~\ref{Sphere}.
\item[$($c$)$]  It holds that ${0.79<\hat{b}<b^*<0.817}$.
\begin{figure}[h]

\begin{tabular}{ccc}
\epsfig{file=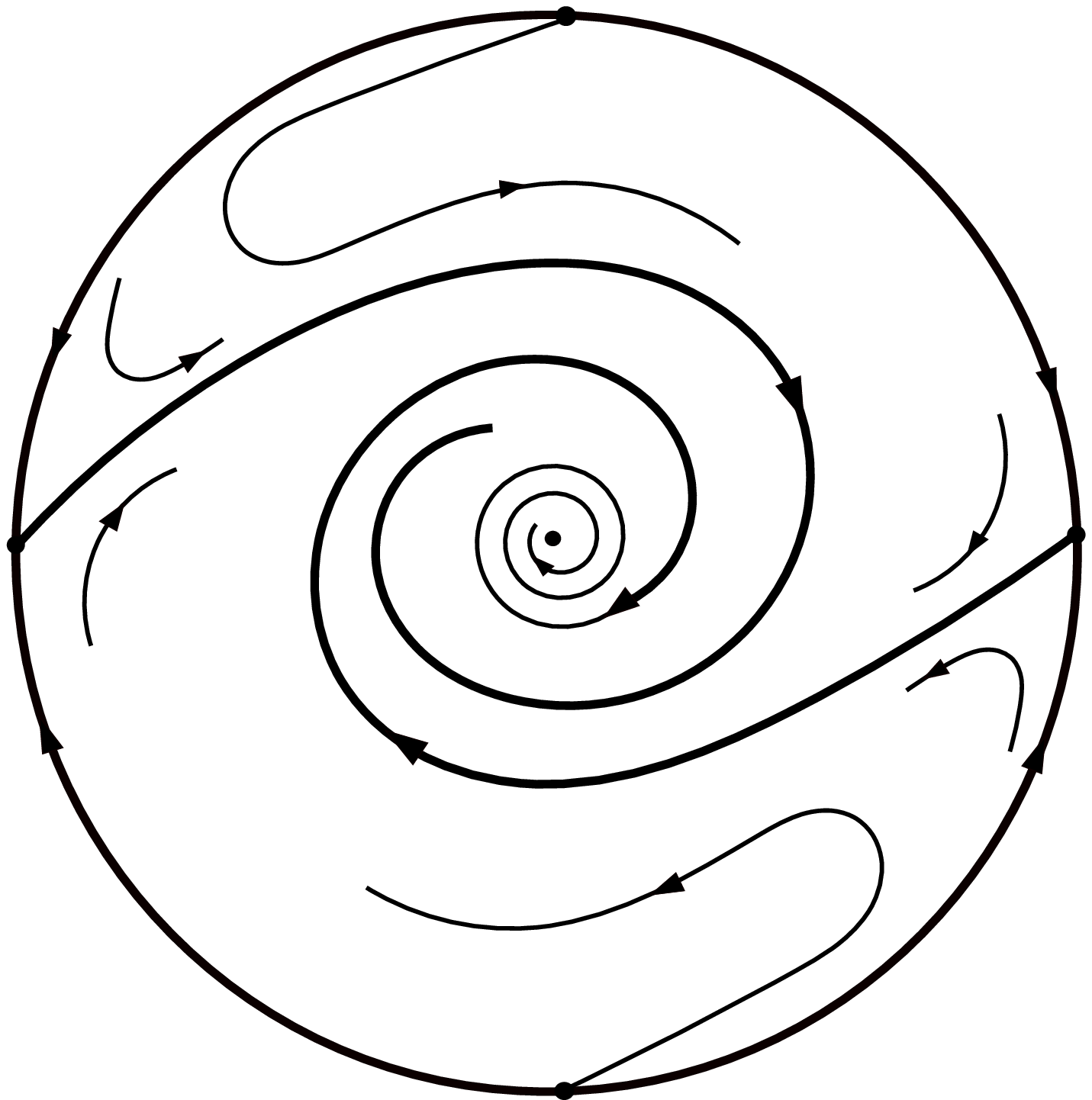,width=4cm,height=4cm} &
\epsfig{file=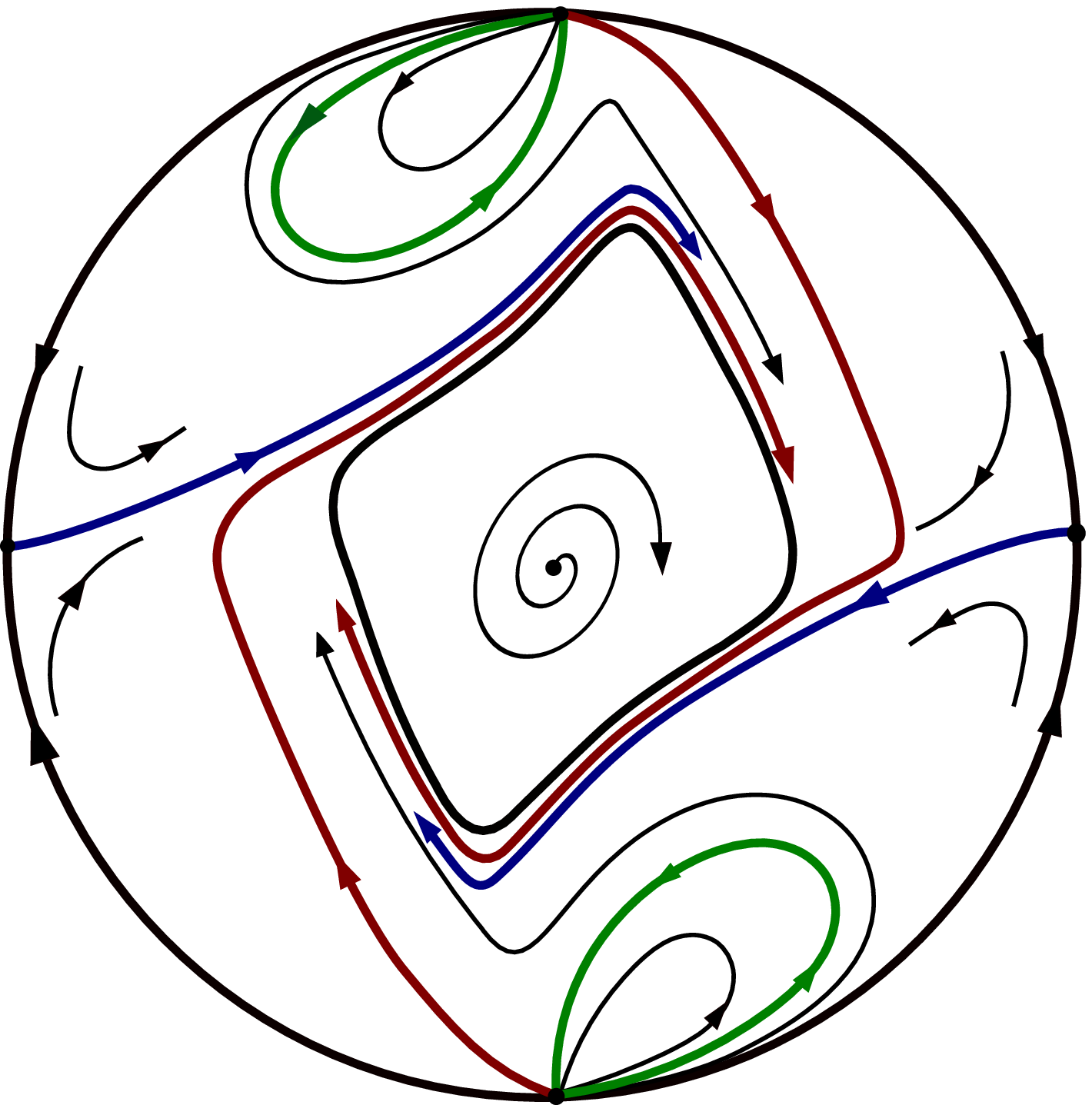,width=4cm,height=4cm}&
\epsfig{file=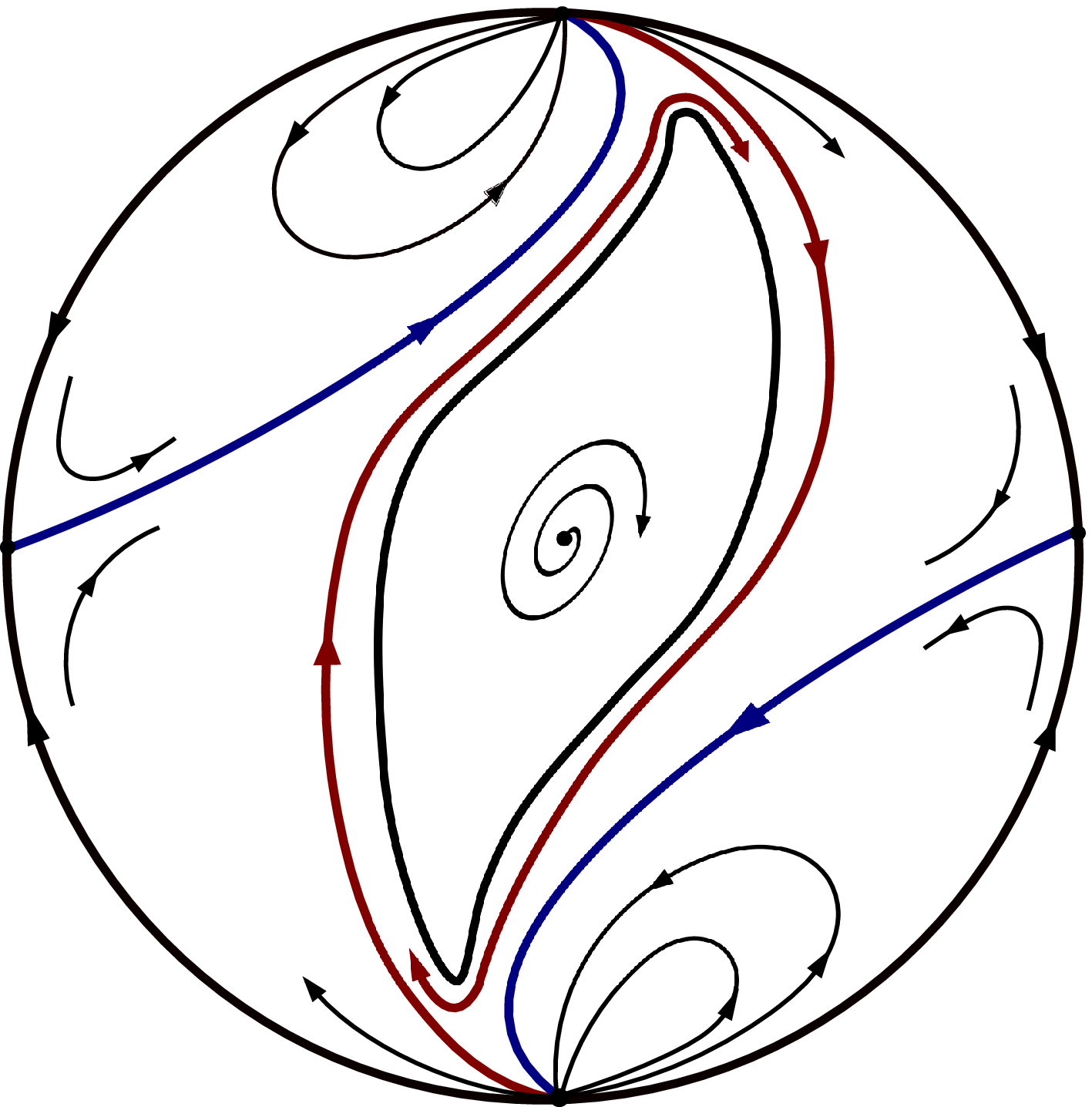,width=4cm,height=4cm}\\
(o) $a\leq 0$& (i) $0<b<\hat{b}$& (ii) $b=\hat{b}$\\&&\\
\epsfig{file=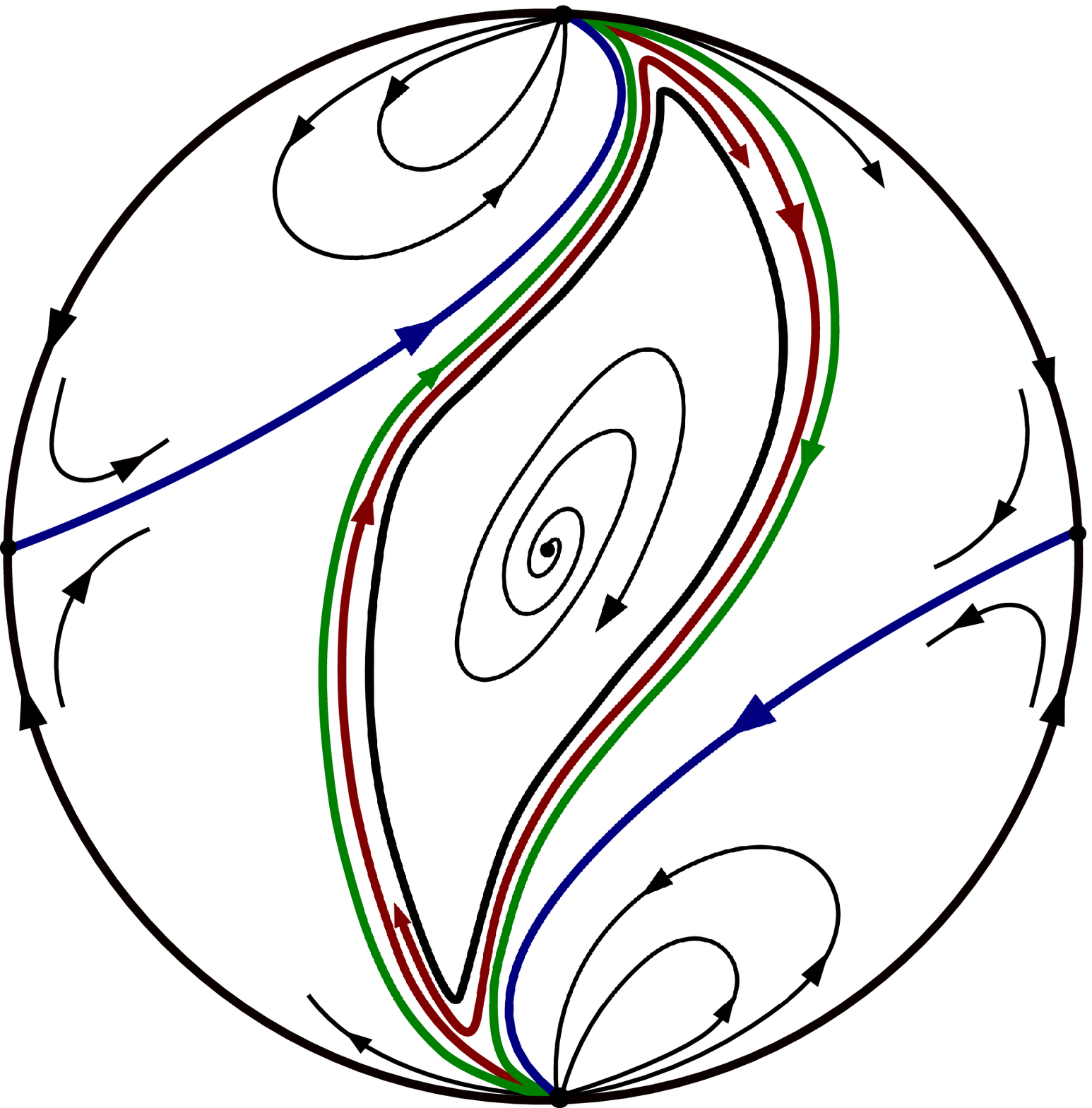,width=4cm,height=4cm}&
\epsfig{file=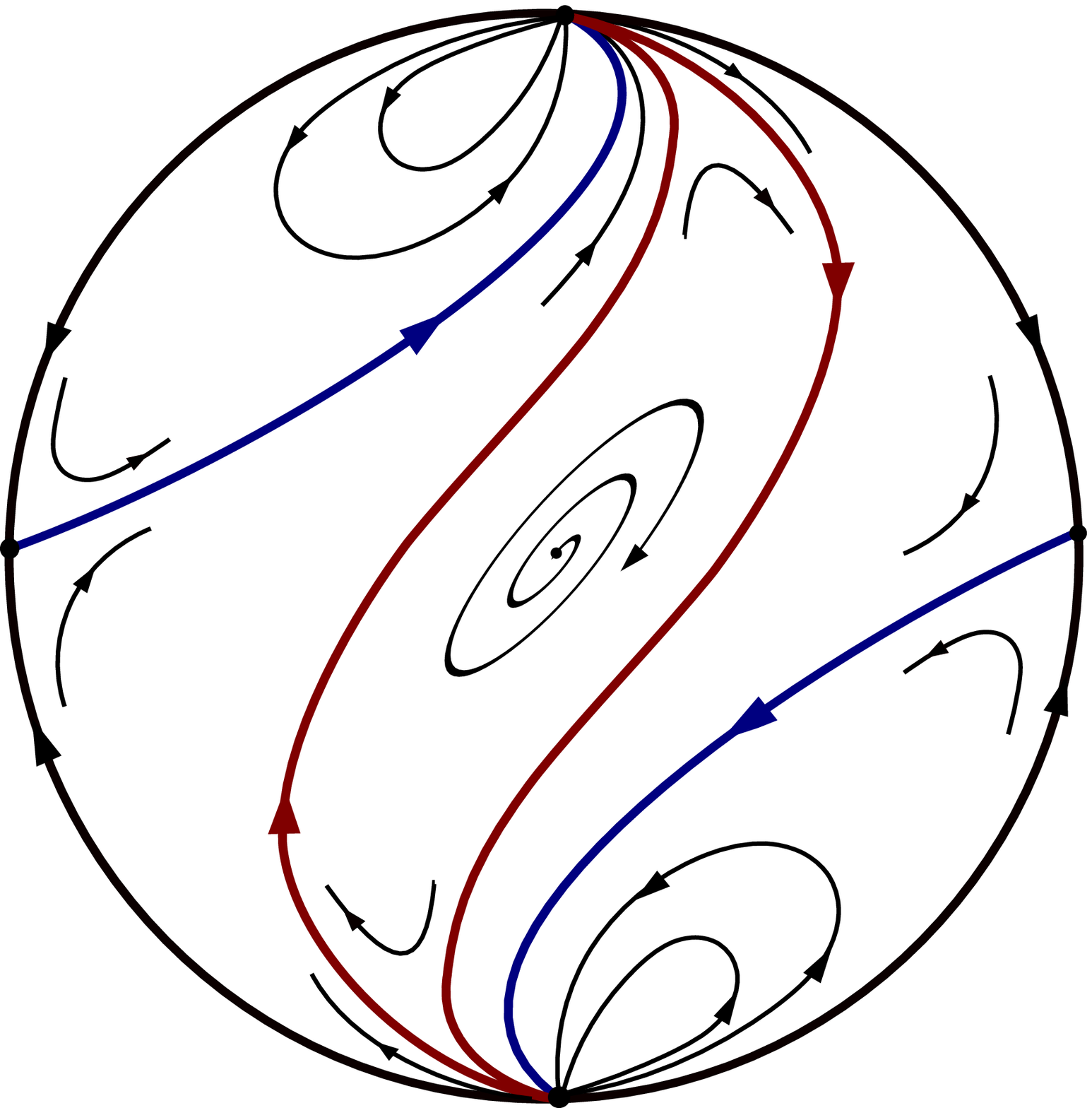,width=4cm,height=4cm}&
\epsfig{file=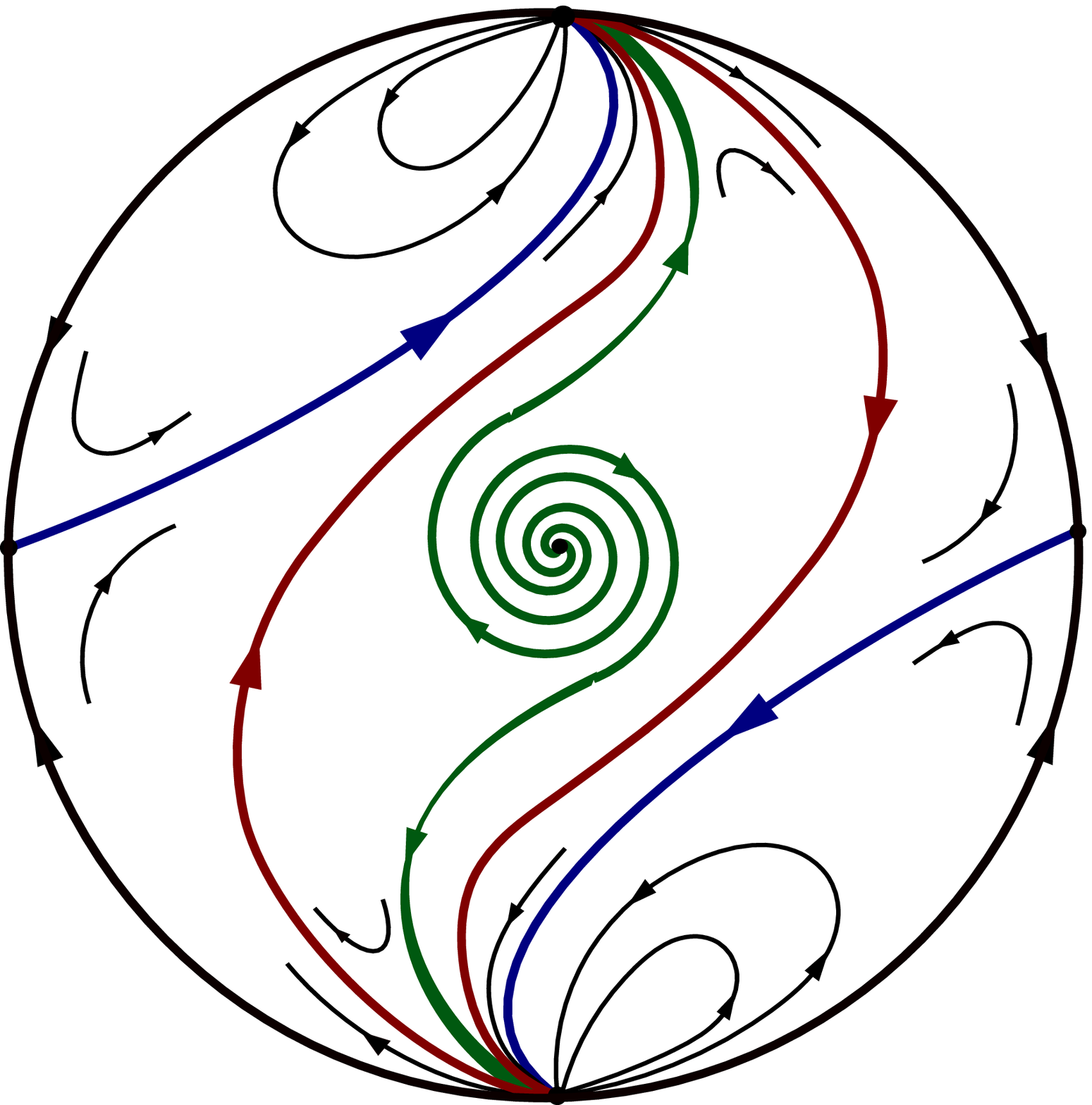,width=4cm,height=4cm}\\
(iii) $\hat{b}<b<b^*$ & (iv) $b=b^*$& (v) $b>b^*$
\end{tabular}
\caption{Phase portraits of systems \eqref{sisa} and  \eqref{sisb}.
When $a\geq0$, then $b=\sqrt{a}.$} \label{Sphere}
\end{figure}
\end{itemize}
\end{theorem}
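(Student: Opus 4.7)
\medskip
\noindent\emph{Proof plan.}
The plan is to split the statement into three parts and attack each one with the tools announced in the abstract, the central object being a rational Dulac function combined with a double discriminant analysis. For part~(a) I would look for a rational function $V(x,y)$ such that
\[
M(x,y,b):=\Div\bigl(V(x,y)\,X_b(x,y)\bigr)
\]
has constant sign on a simply connected open set $U\subset\R^2$ that contains every possible limit cycle of~\eqref{sisb} (since any such cycle must surround the unique finite singularity at the origin). The Bendixson--Dulac theorem then gives uniqueness and hyperbolicity of the limit cycle, and the sign of $M$ determines its stability. Existence for small $b>0$ comes from a standard Hopf analysis at $b=0$ (recall that for $b=0$ the function $x^2+y^2$ is a Lyapunov function, so the weak focus disappears supercritically as $b$ crosses~$0$).

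The decisive technical point is to prove that $M(\cdot,\cdot,b)$ really does have constant sign on $U$ for every $b$ in the relevant range. Here I would apply the double discriminant method: form
\[
D_1(x,b):=\mathrm{Disc}_y\bigl(M(x,y,b)\bigr),\qquad D_2(b):=\mathrm{Disc}_x\bigl(D_1(x,b)\bigr).
\]
Outside the finite zero set of $D_2$, the topological type of the real algebraic curve $\{M=0\}$ does not change with $b$, so a single numerical/graphical check on one representative $b$ of each of the resulting open subintervals rigorously settles the sign question on the whole interval. This reduces an a priori uncountable verification to a finite list of inspections and is the main novelty of the approach.

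Part~(b) requires a detailed Poincar\'e compactification of~\eqref{sisb}. I would locate the infinite singular points, determine their local phase portraits, and then, using analytic power-series expansions of their unstable/stable manifolds, track how the separatrices connect across the Poincar\'e disc as $b$ varies. The new bifurcation value $\hat b$ should correspond to a heteroclinic connection between two infinite saddle separatrices, and its uniqueness as a bifurcation of this type would be established by a rotated-vector-field or monotonicity argument applied to an auxiliary one-parameter family of transverse curves. Combined with the disappearance of the limit cycle in an unbounded polycycle at $b=b^*$ (already identified in~\cite{Xian}) and the Hopf bifurcation at $b=0$, a case-by-case inspection between consecutive bifurcation values produces exactly the six phase portraits listed in Figure~\ref{Sphere}.

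Part~(c) is obtained as a constructive byproduct of (a) and (b). The upper bound $b^*<0.817$ follows from carrying out the Dulac argument of part~(a) explicitly on the interval $b\in(0,0.817)$, so that a limit cycle is shown to persist throughout and therefore cannot vanish before $0.817$. The separation $\hat b<b^*$ and the lower bound $0.79<\hat b$ are obtained by constructing explicit algebraic curves without contact for the flow, built from the separatrix expansions used in part~(b), which locate the heteroclinic connection inside $(0.79,0.817)$. I expect the main obstacle to be the search for a rational Dulac function $V$ for which the polynomial $M(x,y,b)$ is low enough in degree that the roots of $D_2(b)$ can be isolated rigorously all the way up to $b=0.817$; this constraint is precisely what determines the width~$0.027$ of the final interval.
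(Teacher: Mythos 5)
Your toolkit (rational Dulac function, double discriminant, Poincar\'e compactification, separatrix expansions and without-contact curves) is indeed the paper's toolkit, but two load-bearing steps of your plan are wrong or missing. First, your derivation of the upper bound $b^*<0.817$ cannot work: a Bendixson--Dulac argument bounds the number of limit cycles from \emph{above}; it can never show that ``a limit cycle persists throughout'' a parameter interval. And even if persistence on $(0,0.817)$ were established, that would prove $b^*\ge 0.817$, a lower bound, not the claimed upper bound---the logic is inverted. What the paper actually does is the opposite: it proves that at $b=817/1000$ the infinite separatrices are in configuration (v) of Figure~\ref{posRela} (via without-contact curves built from a sixteenth-order expansion of $\mathcal{S}_2,\mathcal{S}_3$ together with Pad\'e approximants), so that, the origin being a source, the Poincar\'e--Bendixson theorem forces an \emph{even} number of limit cycles counted with multiplicity; combined with ``at most one'' from the Dulac function this gives \emph{zero} limit cycles at $b=0.817$, hence $b^*<0.817$. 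Symmetrically, the bound $0.79<\hat b$ comes from proving that configuration (i) holds for every $b\le 79/100$, which forces an odd number (hence at least one) of limit cycles. This parity argument, which is the bridge between the separatrix configurations and the existence or nonexistence of the limit cycle, is entirely absent from your plan.

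Second, nothing in your part (a) yields the ``if and only if $0<b<b^*$'' structure. A Hopf analysis gives a cycle for small $b>0$ and the Dulac function gives at most one for $b\le 0.817$, but a priori the set of parameters admitting a limit cycle could be a union of several intervals: you have no mechanism forcing the cycle, once it disappears, never to reappear. The paper closes this with the observation that \eqref{sisb} is a semi-complete family of rotated vector fields with respect to the parameter $b^2$ (since $\partial \arctan(Q_b/P_b)/\partial b^2= y^2(1+y^2)/(P_b^2+Q_b^2)\ge 0$), and then invokes the non-intersection property and Perko's planar termination principle in Proposition~\ref{fr} to show that the cycles sweep out a single annular region terminating in the unbounded polycycle at $b=b^*$. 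The same rotated-vector-field monotonicity is what proves that the heteroclinic configurations (ii) and (iv) of Figure~\ref{Sphere} each occur for exactly one parameter value, and that the intersection points $x_2(b)$, $x_3'(b)$ of the separatrices with the $x$-axis move monotonically---facts you also need for part (b). You mention rotated vector fields only in passing, for the uniqueness of $\hat b$; in the actual proof they are the backbone of parts (a) and (b).
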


The phase portraits missed in \cite{Xian} are (ii) and (iii) of Figure \ref{Sphere}.

\smallskip

The key steps in our proof of Theorem~\ref{mteo} are the following:
\begin{itemize}
  \item Give analytic asymptotic expansions of the separatrices of the critical
  points at infinity, see Section~\ref{infin}.
\item  Use these expansions to construct explicit piecewise
rational curves, and prove that they are without contact for the
flow given by ~\eqref{sisb}. These curves allow to control the
global relative positions of the separatrices of the infinite
critical points, see Section~\ref{pp}.
\item Provide an alternative proof of the uniqueness and hyperbolicity
of the limit cycle, which is based in the construction of an explicit  rational
Dulac function, see Section~\ref{uni}.
\end{itemize}

By solving numerically the differential equations we can approach
 the bifurcation values given in the theorem, see Remark~\ref{nume}.
 We have obtained that  $\hat{b}\approx 0.8058459066,$ $b^*\approx 0.8062901027$
  and then  $b^*-\hat b\approx 0.000444$. As we have said the main goal
  of this paper is to get an analytic approach to the more relevant value $b^*,$
  because it corresponds to the disappearance of the limit cycle.

Although all our efforts have been focused on system~\eqref{sisb}, the tools that
we introduce in this work can be applied to other families of polynomial vector
fields and they can provide an analytic control of the bifurcation values for
these families. 

As we will see, our approach is not totally algorithmic and following it we do not
know how to improve the interval presented in Theorem~\ref{mteo} for the values
$\hat b$ and  $b^*$ .

One of the main computational difficulties that we have found has been to prove
that certain polynomials in $x,y$ and $b$, with  high degree, do not vanish on
some given regions. To treat this question, in Appendix II we propose a general
method that uses the so called double discriminant and that we believe that can be
useful in other settings, see for instance \cite{Al-S-Se,Pe-Ro}. In our context
this discriminant turns out to be a huge polynomial in $b^2$ with rational
coefficients. In particular we need to control, on a given interval with rational
extremes, how many reals roots has a polynomial of degree 965,  with enormous
rational coefficients. Although Sturm algorithm theoretically works, in practical
our computers can not deal with this problem using it. Fortunately we can utilize
a kind of bisection procedure based on the Descartes rule (\cite{KM}) to overcome
this difficulty, see Appendix~I.

\smallskip

\section{Structure at infinity}\label{infin}
As usual, for studying the behavior of the solutions at infinity of
system \eqref{sisb} we use the Poincar\'e compactification. That is,
we will use the transformations $(x,y)=\left(1/z,u/z\right)$ and
$(x,y)=\left(v/z,1/z\right)$, with a suitable change of time to
transform system \eqref{sisb} into two new polynomial systems, one
in the $(u,z)$-plane and another one in the $(v,z)$-plane
respectively  (see \cite{An-Le-Go} for details). Then, for
understanding the behavior of the solutions of \eqref{sisb} near
infinity we will study the structure of the critical points of the
transformed systems which are localized on the line $z=0$. Recall
that these points are the {\it critical points at infinity} of
system \eqref{sisb} and their separatrices play a key role for
knowing the bifurcation diagram of the system. In fact, it follows
from the works of Markus~\cite{Ma} and Newmann~\cite{Ne} that it
suffices to know the behavior of these separatrices, the type of
finite critical points and the number and type of periodic orbits to
know the phase portraits of the system.
 We obtain the following result:

\begin{figure}[h]\label{fig2}
\centering\epsfig{file=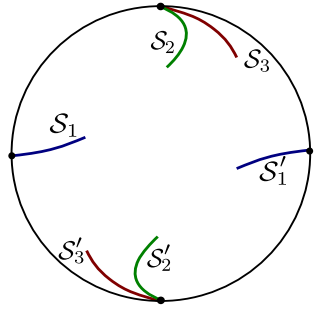,width=4cm,height=4cm}
\caption{Separatrices at infinity for system \eqref{sisb}.}
\end{figure}

\begin{theorem}\label{infi}
System~\eqref{sisb} has six separatrices at infinity, which we
denote by $\mathcal{S}_1,\mathcal{S}_2,$
$\mathcal{S}_3,\mathcal{S}'_1,$ $\mathcal{S}'_2$ and
$\mathcal{S}'_3$, see Figure~\ref{fig2}. Moreover:

\begin{itemize}

\item[$(i)$]Each $\mathcal{S}'_k$ is the image of $\mathcal{S}_k$
under the transformation $(x,y)\rightarrow(-x,-y)$.

\item[$(ii)$]The separatrices $\mathcal{S}_2$ and
$\mathcal{S}_3$ near infinity are contained in the curve
$\{y-\phi(x)=0\}$ where $\phi(x)=\tilde{\phi}(x-b)/(x-b)^2$,
$\tilde{\phi}(u)$ is an analytic function at the origin that
satisfies
\begin{equation}\label{sepS2S3}
\tilde{\phi}(u)=\frac{1}{b}-\frac{1}{3b^2}u+\frac{1}{9b^{3}}u^2-
\frac{359}{27b^4}u^3+O(u^4).
\end{equation}
In particular, $\mathcal{S}_2$ corresponds to $x \lesssim b$ and $\mathcal{S}_3$
to $x \gtrsim b$.

\item[$(iii)$] The separatrix  $\mathcal{S}_1$ near infinity
is contained in the curve $\{y-\varphi(x)=0\}$ where
$\varphi(x)=\tilde{\varphi}(1/x)$ and $\tilde{\varphi}$ is an analytic function at
the origin that satisfies
\begin{equation} \label{sepS1}
\tilde{\varphi}(u)=-u-(b^2-1)u^3- (b^4-3b^2+2)u^5+O\left(u^7\right).
\end{equation}
\end{itemize}
\end{theorem}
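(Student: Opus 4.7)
The plan is to handle the three assertions separately. Claim $(i)$ is immediate by inspection: substituting $(x,y)\mapsto(-x,-y)$ into \eqref{sisb} leaves the vector field invariant, so this involution maps orbits---and hence infinite separatrices---to orbits, and the images of $\mathcal{S}_1,\mathcal{S}_2,\mathcal{S}_3$ are exactly the primed separatrices. For $(ii)$ and $(iii)$ I would pass to the Poincar\'e charts $U_1:(x,y)=(1/z,u/z)$ and $U_2:(x,y)=(v/z,1/z)$, multiply the pulled-back vector fields by $z^{4}$ to clear the singularities introduced by the degree-five nonlinearity, and note that in each chart the invariant line $z=0$ carries a unique (and highly degenerate) critical point at the origin. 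The one in $U_1$ attracts $\mathcal{S}_1$, while the one in $U_2$ attracts both $\mathcal{S}_2$ and $\mathcal{S}_3$; the antipodal charts $V_1,V_2$ are covered by the symmetry of $(i)$.

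To prove $(iii)$ I would search for a flow-invariant analytic curve $y=\varphi(x)$ near $x=+\infty$ by imposing the tangency condition $\varphi(x)\varphi'(x) = -x + (b^2-x^2)\bigl(\varphi(x)+\varphi(x)^3\bigr)$ and inserting the ansatz $\varphi(x)=\tilde{\varphi}(1/x)$ with $\tilde{\varphi}$ analytic and $\tilde{\varphi}(0)=0$. This converts the tangency into an ODE for $\tilde{\varphi}$ whose formal power series solution is uniquely determined order by order; since the symmetry of $(i)$ forces $\tilde{\varphi}$ to be odd, only odd powers survive, and matching the first three of them yields \eqref{sepS1}. For $(ii)$ the same strategy is carried out with the ansatz $y=\tilde{\phi}(x-b)/(x-b)^{2}$, whose form is dictated by the fact that both $\mathcal{S}_2$ and $\mathcal{S}_3$ escape to infinity along the vertical line $x=b$, precisely where the factor $(b^2-x^2)$ vanishes. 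Setting $u=x-b$ and clearing the common factor $u^{-5}$ in the tangency condition produces the implicit ODE
\begin{equation*}
u\,\tilde{\phi}\,\tilde{\phi}' \;=\; 2\tilde{\phi}^{\,2} \;-\; (2b+u)\tilde{\phi}^{\,3} \;-\; (2b+u)u^{4}\tilde{\phi} \;-\; (b+u)u^{5}.
\end{equation*}
Its leading balance at $u=0$ reads $\tilde{\phi}(0)^{2}\bigl(1-b\,\tilde{\phi}(0)\bigr)=0$, whose nontrivial root is $\tilde{\phi}(0)=1/b$; afterwards the higher coefficients are determined by a triangular linear recursion, producing \eqref{sepS2S3}. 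The two branches $u<0$ and $u>0$ of the resulting curve are then identified with $\mathcal{S}_2$ and $\mathcal{S}_3$ respectively.

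The main obstacle I anticipate is upgrading these formal constructions to genuine \emph{analytic} invariant curves, since both singularities at infinity are degenerate and a priori the formal series need not converge. For $\tilde{\phi}$ this is essentially automatic: dividing the displayed ODE by $\tilde{\phi}$ puts it into the Briot--Bouquet form $u\,\tilde{\phi}' = H(u,\tilde{\phi})$ with $H(0,1/b)=0$ and $\partial_{\tilde{\phi}} H(0,1/b)=-2\notin \mathbb{Z}_{\geq 0}$, so the classical Briot--Bouquet theorem gives a unique analytic solution through $(0,1/b)$. For $\tilde{\varphi}$ the critical point in $U_1$ is more degenerate, but a weighted blow-up $u=z^{2}(w-1)$ desingularises it into a semi-hyperbolic saddle whose stable manifold is analytic by standard results; blowing down recovers the analyticity of $\tilde{\varphi}$ and thereby completes $(iii)$.
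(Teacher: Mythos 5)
Your claim $(i)$ is exactly the paper's argument. Your treatment of $(ii)$ is correct and genuinely different from the paper's: I checked your implicit ODE $u\,\tilde{\phi}\,\tilde{\phi}' = 2\tilde{\phi}^{2} - (2b+u)\tilde{\phi}^{3} - (2b+u)u^{4}\tilde{\phi} - (b+u)u^{5}$ and the Briot--Bouquet data at $(0,1/b)$ (eigenvalue $\partial_{\tilde\phi}H=-2$, not a positive integer), so the unique analytic solution exists. The paper gets the same conclusion by a different trick: in the blow-up chart $v=v$, $s=z/v$ the two separatrices become the single orbit of the polynomial system \eqref{campoVS} through the \emph{regular} point $(0,1/b)$, so analyticity is automatic and the Taylor coefficients come from the same kind of recursion; your route stays in the $(x,y)$-plane and avoids the compactification for this step, which is a nice economy.

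There are, however, two genuine gaps. First, the opening assertion of the theorem (six separatrices, arranged as in Figure 2) and the identification of your invariant curves with $\mathcal{S}_1,\mathcal{S}_2,\mathcal{S}_3$ are assumed, not proved: you simply assert that the critical point in $U_1$ ``attracts $\mathcal{S}_1$'' and the one in $U_2$ ``attracts both $\mathcal{S}_2$ and $\mathcal{S}_3$''. Both points have identically vanishing linear part, so knowing how many orbits reach them, and that exactly two of them bound hyperbolic sectors in each relevant chart, requires the desingularization that occupies most of the paper's proof (the directional blow-ups of \eqref{CartaUZ} and \eqref{CartaVZ} in Lemma~\ref{lem1} and its companion, including the appeal to Theorem 65 of Andronov et al.). Without those local phase portraits, the unique Briot--Bouquet curve could a priori be an invariant object unrelated to the separatrices, and the count ``six'' has no justification.

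Second, your analyticity argument for $(iii)$ identifies the wrong invariant manifold. After your blow-up $u=z^{2}(w-1)$ and division by $z^{2}$, the origin is indeed semi-hyperbolic with eigenvalues $-1$ (the $w$-direction) and $0$ (the $z$-direction); but the curve carrying $\mathcal{S}_1$ becomes $w=-(b^{2}-1)z^{2}+\cdots$, tangent to the \emph{center} direction, i.e.\ it is the center separatrix. The stable manifold is the exceptional line $\{z=0\}$, which blows down to a single point and carries no information about $\tilde{\varphi}$. Analyticity of center manifolds is not a standard result, even at topological saddles: for $\dot z=z^{3}$, $\dot w=-w+z$ the unique center manifold has Taylor series $\sum \pm (2k-1)!!\,z^{2k+1}$, which diverges. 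Structurally this is the same situation as yours (the small factor $u^{4}$ multiplying $\tilde{\varphi}'$ makes $u=0$ an irregular singular point, and there is no nonzero eigenvalue to invoke as in $(ii)$), so ``standard results'' at best give the expansion \eqref{sepS1} in the asymptotic, finitely smooth sense --- which, to be fair, is all that the later sections of the paper actually use, but it is not what you (or the statement) claim.
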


\begin{remark} In the statements $(ii)$ and $(iii)$ of Theorem~\ref{infi} the Taylor expansions
of the functions $\tilde{\phi}$ and $\tilde{\varphi}$ can be
obtained up to any given order. In fact, in Section~\ref{pp} we will
use the approximation of $\tilde\phi$ until order 16.
\end{remark}

As a consequence of the above theorem we have the following result:

\begin{figure}[h]
\centering
\begin{tabular}{c}
\begin{tabular}{ccc}
\epsfig{file=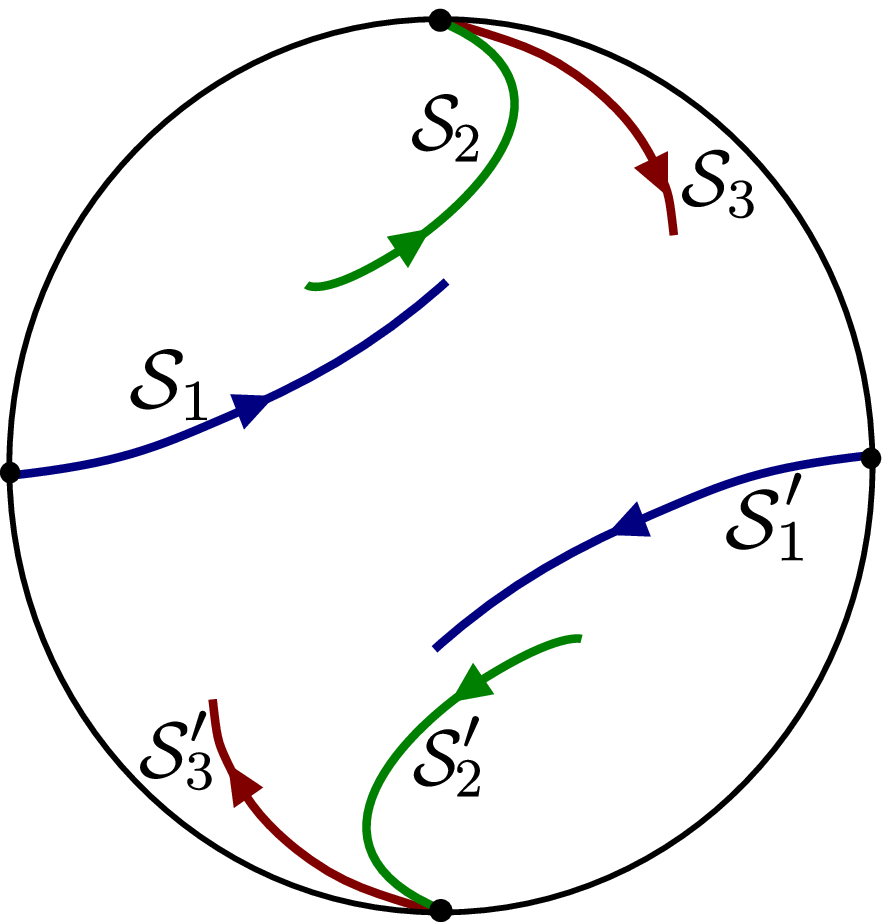,width=3cm,height=3.1cm}&
\epsfig{file=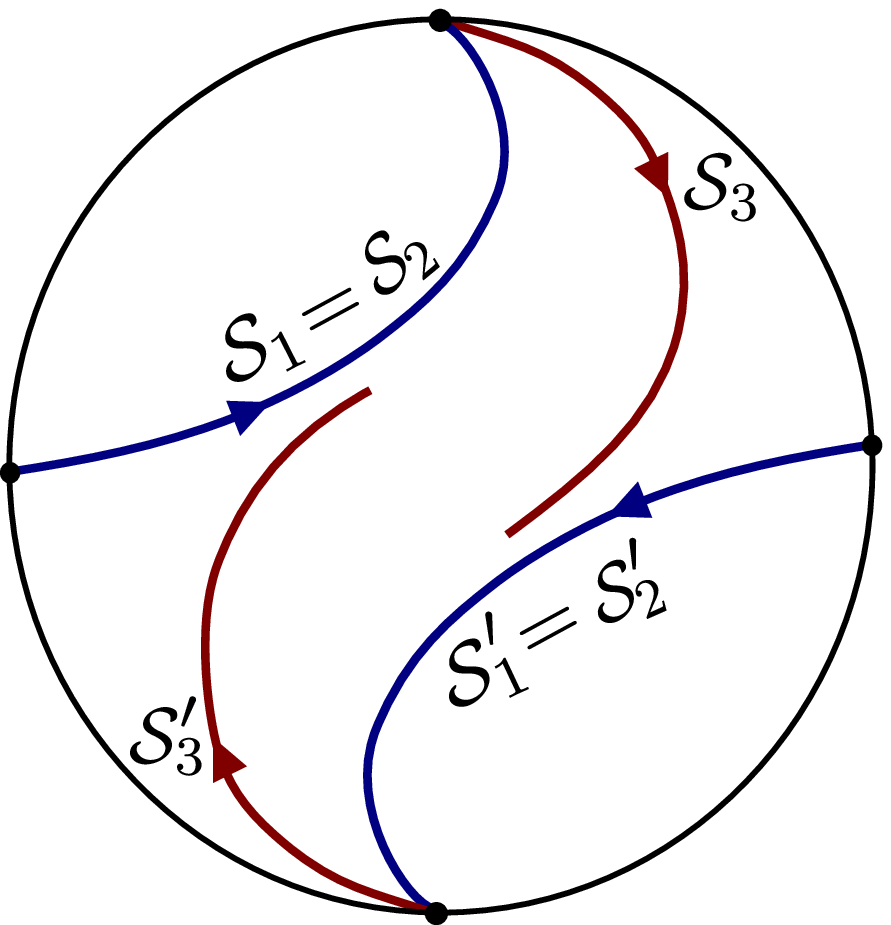,width=3cm,height=3.1cm}&
\epsfig{file=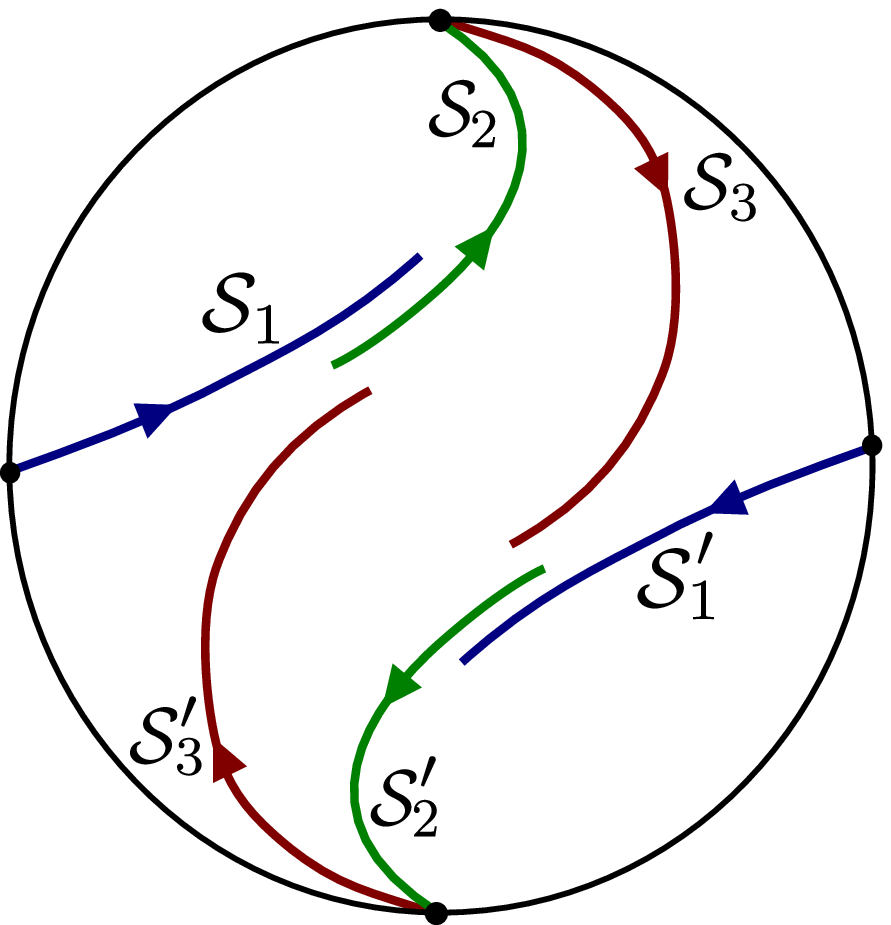,width=3cm,height=3.1cm}\\
$(i)$& $(ii)$& $(iii)$
\end{tabular}
\\
\begin{tabular}{cc}
\epsfig{file=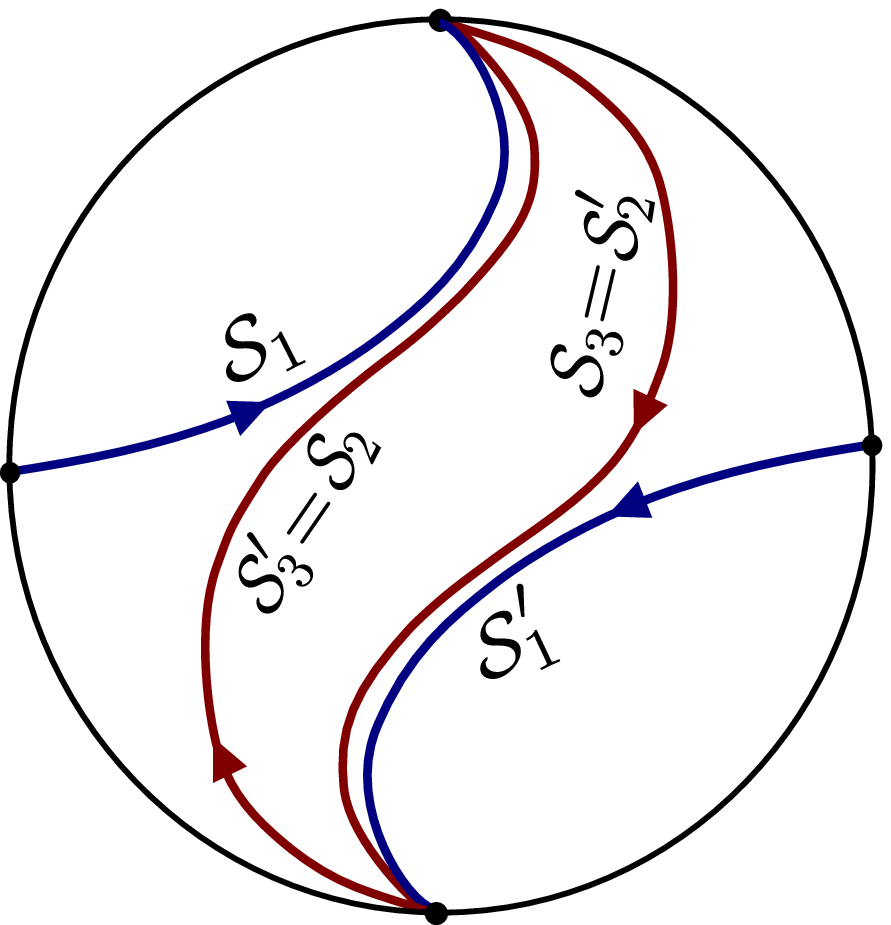,width=3cm,height=3.1cm}&
\epsfig{file=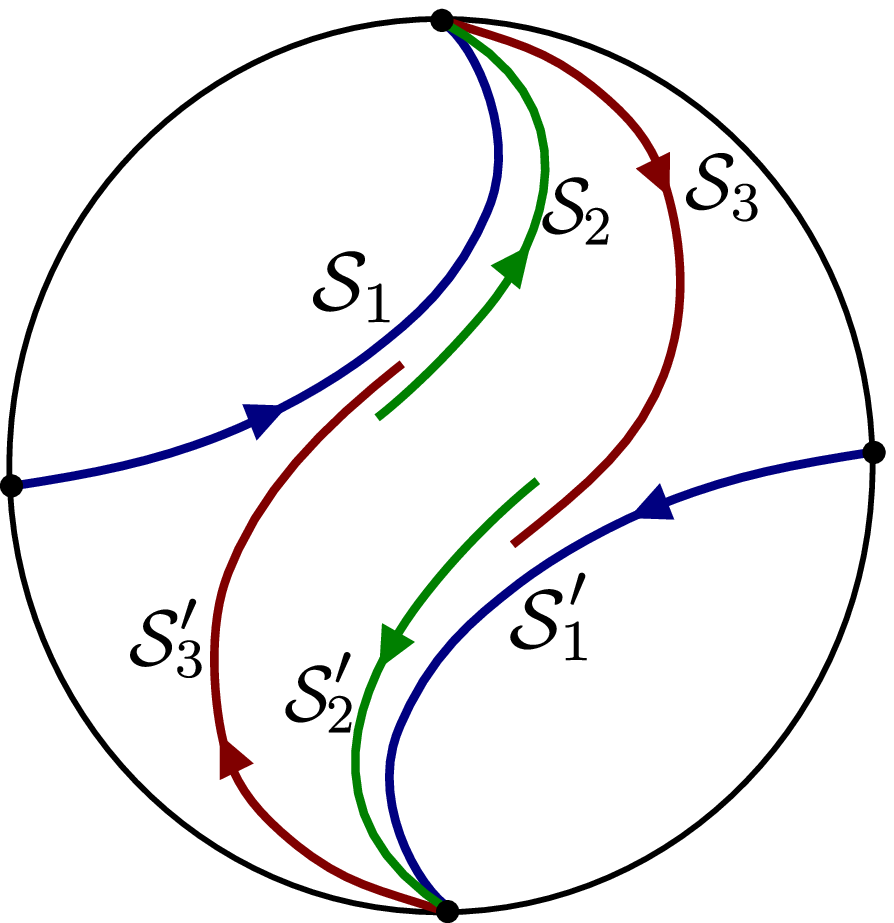,width=3cm,height=3.1cm}\\
$(iv)$& $(v)$
\end{tabular}
\end{tabular}
\caption{Relative position of the separatrices of system \eqref{sisb}.}
\label{posRela}
\end{figure}

\begin{corollary}\label{ccc}
All the possible relative positions  of the separatrices of system \eqref{sisb} in
the Poincar\'{e} disc are given in Figure~\ref{posRela}.
\end{corollary}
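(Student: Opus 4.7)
The plan is to reduce the classification to a finite case analysis built on top of Theorem~\ref{infi}, using both the reflection symmetry of the system and the elementary dynamical data of the finite plane.

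First I would exploit the fact that $(x,y)\mapsto(-x,-y)$ leaves~\eqref{sisb} invariant and sends $\mathcal{S}_k$ to $\mathcal{S}'_k$; it therefore suffices to track the fates of the three unprimed separatrices. Next I would collect the finite-plane information: the origin is the unique finite critical point and its Jacobian has trace $b^2>0$, so it is a source for all $b>0$; consequently no separatrix can have the origin as an $\omega$-limit when followed from infinity, and any periodic orbit must surround the origin. By the Markus--Newmann theorem recalled in Section~\ref{infin}, the global topological type of the phase portrait is determined once the $\alpha$- and $\omega$-limits of the six separatrices are known, together with the number of limit cycles.

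Then I would read off the orientation of the flow on each separatrix at infinity from the expansions~\eqref{sepS2S3} and~\eqref{sepS1}: $\mathcal{S}_2$ and $\mathcal{S}_3$ leave the north infinite singular point travelling into the finite plane on either side of the vertical line $x=b$, while $\mathcal{S}_1$ leaves the east infinite singular point along the curve $y\sim -1/x$. Once these orientations are fixed, Poincaré--Bendixson forces the $\omega$-limit of each of these three separatrices, inside the finite plane, to be one of: the limit cycle (when it exists), another infinite critical point via a heteroclinic connection, or a polycycle formed by separatrices at infinity. I would then list the combinations: $\mathcal{S}_3$ either accumulates on the limit cycle, forms a saddle connection with $\mathcal{S}'_1$ or $\mathcal{S}'_2$, or escapes to the west infinite critical point; for each choice the behaviours of $\mathcal{S}_1$ and $\mathcal{S}_2$ are then forced by non-crossing of orbits and the symmetry, and the Jordan-curve theorem inside the Poincaré disc eliminates the remaining exotic combinations. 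This yields exactly the five configurations drawn in Figure~\ref{posRela}.

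The main obstacle in executing this plan is the topological bookkeeping needed to rule out configurations in which a separatrix crosses one of the coordinate axes many times or in which $\mathcal{S}_2$ and $\mathcal{S}_3$ behave asymmetrically relative to $\mathcal{S}_1$. The asymptotic information in Theorem~\ref{infi} resolves this: for every $b>0$, the curves $y=\phi(x)$ and $y=\varphi(x)$ near infinity have a prescribed relative position, so $\mathcal{S}_3$ is forced to stay in $\{x>0,\,y>0\}$ until it first meets another separatrix or the limit cycle, and the possibilities for that first meeting are precisely the five enumerated cases.
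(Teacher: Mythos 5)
Your overall skeleton --- reduce by the symmetry $(x,y)\mapsto(-x,-y)$, use that the origin is the unique finite critical point and a source, and then let Poincar\'e--Bendixson together with the Markus--Newmann theorem turn the data of Theorem~\ref{infi} into a finite list of separatrix configurations --- is exactly the argument the paper leaves implicit when it states the corollary as a consequence of Theorem~\ref{infi}. However, your execution rests on a misreading of the structure at infinity. The separatrices $\mathcal{S}_2$ and $\mathcal{S}_3$ do \emph{not} both leave the critical point at the end of the positive $y$-axis: for $y$ large the sign of $\dot y=-x+(b^2-x^2)(y+y^3)$ shows that orbits ascend in the strip $|x|<b$ and descend for $x>b$, so $\mathcal{S}_3$ (the branch with $x\gtrsim b$) emanates from that point, while $\mathcal{S}_2$ (the branch with $x\lesssim b$) tends \emph{to} it in forward time. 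This is visible in the paper itself: in Remark~\ref{nume} the point $(0,1/b)$ is a regular point whose forward and backward orbits yield the two separatrices, and Proposition~\ref{ppp} says $\mathcal{S}_2$ and $\mathcal{S}_3$ bound a hyperbolic sector. Hence for $\mathcal{S}_2$ one must classify its $\alpha$-limit, not its $\omega$-limit, and your uniform treatment of the three unprimed separatrices as orbits ``leaving infinity'' invalidates the subsequent bookkeeping.

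The case list for $\mathcal{S}_3$ is also wrong in both directions. A ``saddle connection with $\mathcal{S}'_1$'' is impossible: $\mathcal{S}_3$ and $\mathcal{S}'_1$ each emanate from (are $\alpha$-asymptotic to) a critical point at infinity, and one orbit cannot have two distinct $\alpha$-limit points; a connection can only identify an outgoing separatrix with an incoming one, i.e.\ $\mathcal{S}_3$ with $\mathcal{S}_2$ or $\mathcal{S}'_2$. Likewise ``$\mathcal{S}_3$ escapes to the west infinite critical point'' cannot occur: by Lemma~\ref{lem1} the points at the ends of the $x$-axis are saddles whose stable manifolds lie on the line at infinity, so no orbit of the finite plane has them as $\omega$-limit. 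Conversely, you omit the case that actually occurs: the degenerate points at the ends of the $y$-axis have attracting parabolic sectors (orbits escaping to infinity asymptotic to vertical lines $x=c$ with $|c|<b$), so $\mathcal{S}_3$ can tend to the \emph{south} point without coinciding with any other separatrix. This is precisely what happens for $b>b^*$: configuration (v) of Figure~\ref{posRela} has neither a limit cycle nor a polycycle, so under your enumeration $\mathcal{S}_3$ would have nowhere to go; your scheme therefore cannot produce configuration (v) at all, and the claimed reduction to the five pictures fails. A correct proof needs the local sector structure at infinity established in Figures~\ref{ff} and~\ref{blowup1} (saddles at the ends of the $x$-axis; hyperbolic, parabolic and elliptic sectors at the ends of the $y$-axis), after which the Poincar\'e--Bendixson case analysis should be run on $\omega(\mathcal{S}_1)$, $\omega(\mathcal{S}_3)$ and $\alpha(\mathcal{S}_2)$, with the symmetry and non-crossing of orbits cutting the combinations down to the five figures.
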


To prove the above theorem we need some preliminary lemmas.

\begin{lemma}\label{lem1}
By using the transformation $(x,y)=\left(1/z,u/z\right)$ and the
change of time $dt/d{\tau}=1/{z^4}$ system \eqref{sisb} is
transformed into the system
\begin{equation}\label{CartaUZ}
\left\{\begin{array}{l}
u'=-(1+u^2)z^4-u(1-b^2z^2)(u^2+z^2),\\
z'=-uz^5,
\end{array}\right.
\end{equation}
where the prime denotes the derivative respect to $\tau$. The origin
is the unique critical point of \eqref{CartaUZ} and it is a saddle.
Moreover the stable manifold is the $u$-axis, the unstable manifold,
$\mathcal{S}_1$, is locally contained in the curve
$\{u-\psi(z)=0\}$, where $\psi(z)$ is an analytic function at the origin that satisfies
\begin{equation}\label{unsvar1}
\psi(z)=-z^2-(b^2-1)z^4-(b^4-3b^2+2)z^6+O(z^{8}),
\end{equation}
see Figure~\ref{ff}.
\end{lemma}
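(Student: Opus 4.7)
The plan is to realize the chart transformation and time rescaling concretely, verify that the origin of the new chart is the unique singular point on $\{z=0\}$, and then set up and solve the invariance equation for the unstable separatrix.

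First I would perform the chart change. From $x=1/z$ and $y=u/z$ one obtains $\dot z = -uz$ immediately from $\dot x = y$, and differentiating $y=u/z$ gives $z\dot y = \dot u + u^2$; substitution of the second equation of \eqref{sisb} then yields
\[
\dot u = z\dot y - u^2 = -1 - u^2 + \frac{u(b^2z^2-1)(u^2+z^2)}{z^4}.
\]
Multiplying the right-hand sides of $\dot u$ and $\dot z$ by $z^4$ (the effect of the time rescaling) clears the denominator and produces \eqref{CartaUZ}. On the invariant axis $\{z=0\}$ the restricted flow reads $u'=-u^3$, so the origin is the only equilibrium there.

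Next I would examine the local structure at $(u,z)=(0,0)$. The linear part of \eqref{CartaUZ} vanishes identically; the lowest-order terms are $u'=-u^3-uz^2+\cdots$ and $z'=-uz^5$, so the singularity is highly degenerate and Hartman--Grobman does not apply. Nevertheless, the $u$-axis is invariant with contractive dynamics $u'=-u^3$, supplying one stable separatrix on each side of the origin. To obtain the transverse separatrix I would seek an analytic curve $u=\psi(z)$ with $\psi(0)=0$ satisfying the invariance identity
\[
-(1+\psi^2)z^4 - \psi(1-b^2z^2)(\psi^2+z^2) = -\psi\,\psi' z^5.
\]
Inserting the ansatz $\psi(z)=c_2z^2+c_4z^4+c_6z^6+\cdots$ and equating successive coefficients of $z^4,z^6,z^8,\ldots$ forces $c_2=-1$, $c_4=-(b^2-1)$, $c_6=-(b^4-3b^2+2)$, and recursively all higher $c_{2k}$, producing the formal expansion~\eqref{unsvar1}.

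To upgrade this formal invariant curve to a convergent one I would perform the directional blow-up $u=z^2 w$ followed by the time rescaling $d\sigma=z^2\,d\tau$. A short computation shows that in the translated coordinate $\tilde w=w+1$ the system takes the form
\[
\frac{d\tilde w}{d\sigma}=-\tilde w-(b^2-1)z^2+O(z^2\tilde w,\,z^4),\qquad \frac{dz}{d\sigma}=z^5(1-\tilde w),
\]
so $(\tilde w, z)=(0,0)$ is a semi-hyperbolic singular point of saddle--node type. Classical results on analytic invariant manifolds at such points (cf.~\cite{An-Le-Go}) then furnish a unique analytic invariant curve $\tilde w=h(z)$ tangent to the $z$-axis, and blowing down by $u=z^2(-1+h(z))$ recovers $u=\psi(z)$ analytic with the expansion computed above. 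On this curve $z'=-\psi(z)z^5=z^7+O(z^9)$, so $|z|$ grows away from the origin; combined with the contraction on $\{z=0\}$ this certifies the topological saddle structure and identifies $\mathcal{S}_1=\{u=\psi(z)\}$ as the unstable separatrix. The main technical obstacle is precisely this passage from a formal to a convergent invariant curve at a non-elementary singularity; the blow-up route above is the cleanest, since semi-hyperbolic invariant-manifold results then apply directly.
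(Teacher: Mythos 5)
Your reduction to \eqref{CartaUZ} is computed correctly, the formal power series step is sound (the invariance identity does determine $c_2=-1$, $c_4=-(b^2-1)$, $c_6=-(b^4-3b^2+2)$ and, recursively, every higher coefficient, in agreement with \eqref{unsvar1}), and your weighted blow-up $u=z^2w$, $\tilde w=w+1$, $d\sigma=z^2\,d\tau$ is also correct as a computation. The first genuine gap is in the claim that this "certifies the topological saddle structure". The blow-down image of a neighbourhood of $(\tilde w,z)=(0,0)$ is only the cusp-shaped region $\{|u+z^2|\le\delta z^2,\ |z|\le\delta\}$, so your chart describes the flow there and on the invariant $u$-axis, and nowhere else. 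Nothing in your argument controls orbits approaching the origin along the directions $u=mz$ with $m\ne0$, or tangentially to the $u$-axis; a priori there could be further separatrices or parabolic/elliptic sectors of orbits tending to the origin in that uncontrolled region, which would falsify both "the origin is a saddle" and "the stable manifold \emph{is} the $u$-axis" (invariance plus attraction along the axis shows only that the axis is contained in the stable set). This is precisely why the paper performs \emph{two} directional blow-ups: the chart $q=z/u$ of \eqref{sisvq} yields a hyperbolic saddle (eigenvalues $\pm1$) governing the approach along the $u$-axis, while the chart $r=u/z$ of \eqref{sisrz} covers all remaining directions (its exceptional line $z=0$ carries no singular point besides $r=0$, so no orbit reaches the origin along $u=mz$, $m\neq 0$); only the combination of the two charts assembles the phase portrait of Figure~\ref{ff}.

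The second problem is your final citation. The curve $\tilde w=h(z)$ you need is tangent to the \emph{zero} eigendirection, i.e.\ it is a center-type separatrix, and "classical results on analytic invariant manifolds" do not apply to it: those theorems concern manifolds tangent to hyperbolic eigendirections (here only the line $z=0$). Moreover the point is not of saddle--node type: on the center direction $dz/d\sigma=z^5(1+O(z^2))$ has odd leading power whose sign is opposite to the hyperbolic eigenvalue $-1$, so by the semi-hyperbolic classification (Theorem 65 of \cite{An-Le-Go}, the very result the paper invokes at this juncture) it is a semi-hyperbolic \emph{saddle}. The distinction is not cosmetic: for a genuine saddle--node the center manifold is in general neither unique nor analytic (Euler's example $\dot z=z^2$, $\dot h=-h+z$, whose formal center manifold $\sum_{n\ge1}(-1)^{n-1}(n-1)!\,z^n$ diverges), so the statement you cite would simply be false in the case you name. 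In the saddle case one does get existence, uniqueness and the asymptotic expansion of the center separatrix from the classical theory, and that is what the paper's proof extracts; but analyticity of such a separatrix is not part of those classical statements, so if you want the full strength of the lemma as worded, this step needs a separate argument rather than a reference.
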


\begin{figure}[h]
\centering\epsfig{file=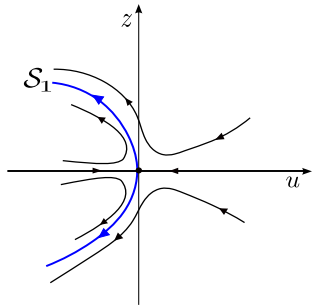,width=4cm,height=4cm} \caption{Phase portrait
of system \eqref{CartaUZ}.}\label{ff}
\end{figure}

\begin{proof}
From the expression of \eqref{CartaUZ} it is clear that the origin
is its unique critical point. For determining its structure we will
use the directional blow-up since the linear part of the system at
this point vanishes identically.

The $u$-directional blow-up is given by the transformation $u=u$,
$q={z}/{u}$; and by using the change of time $dt/d{\tau}={u^2}$,
system \eqref{CartaUZ} becomes
\begin{equation}
\left\{\begin{array}{l}\label{sisvq}
\dot u=-u-(1-b^2u^2)uq^2-(1-b^2u)u^2q^4-u^4q^4,\\
\dot q=q+(1-b^2u^2)q^3+(1-b^2u)uq^5.
\end{array}\right.
\end{equation}
This system has a unique critical point at origin and it is a saddle with
eigenvalues~$\pm 1$.

The  $z$-directional blow-up is given by the transformation
$r={u}/{z}$, $z=z$.  Doing the change of time $dt/d\tau=-{z^2},$
system \eqref{CartaUZ} becomes
\begin{equation}\label{sisrz}
\left\{\begin{array}{l}
\dot r=z+(1-b^2z^2)(r+r^3),\\
\dot z=rz^4.
\end{array}\right.
\end{equation}
This system has a unique critical point at the origin which is
semi-hyperbolic. We will use  the results of \cite[Theorem 65]{An-Le-Go}
to determine its type. By applying the linear change of variables  $r=-\xi+\eta$,
$z=\xi$ system \eqref{sisrz} is transformed into
$$
\left\{\begin{array}{l}
\dot \xi=(\eta-\xi)\xi^4,\\
\dot \eta=\eta-N(\xi,\eta),
\end{array}\right.
$$
where $N(\xi,\eta)=(\eta-\xi)(b^2\xi^2-\xi^4)+(\eta-\xi)^3(b^2\xi^2-1)$. It is
easy to see that if $\eta=n(\xi)$ is the solution of $\eta-N(\xi,\eta)=0$ passing
for the origin, then $n(\xi)=-(b^2-1)\xi^3-(b^4-3b^2+2)\xi^5+O(\xi^7)$. Thus
$(n(\xi)-\xi)\xi^4=-\xi^5+O(\xi^7)$. Therefore from \cite[Theorem 65]{An-Le-Go} we
know that the origin is a semi-hyperbolic saddle. Moreover, its stable manifold is
the $\eta$-axis and its unstable manifold is given by
$$
\eta=-(b^2-1)\xi^3-(b^4-3b^2+2)\xi^5+O(\xi^7).
$$
In the plane $(r,z)$ the local expression of this manifold is
$$
r=-z-(b^2-1)z^3-(b^4-3b^2+2)z^5+O(z^7).
$$

Finally, in the $(u,z)$-plane the unstable manifold is contained in the curve
\eqref{unsvar1} and from the analysis of phase portraits of systems \eqref{sisvq}
and \eqref{sisrz} we obtain that the local phase portrait of system
\eqref{CartaUZ} is the one given in Figure~\ref{ff}.
\end{proof}

\begin{lemma}
By using the transformation $(x,y)=\left(v/z,1/z\right)$ and the
change of time $dt/d{\tau}=1/{z^4}$ system \eqref{sisb} is
transformed into the system
\begin{equation}\label{CartaVZ}
\left\{\begin{array}{lll}
v'&=&v(1+z^2)(v^2-b^2z^2)+(1+v^2)z^4,\\
z'&=&z(1+z^2)(v^2-b^2z^2)+vz^5,
\end{array}\right.
\end{equation}
where the prime denotes the derivative respect to $\tau$. System
\eqref{CartaVZ} has a unique critical point at the origin and
 its local phase portrait is the one showed in Figure \ref{blowup1}.
Moreover, the separatrices $\mathcal{S}_2$ and $\mathcal{S}_3$  are locally
contained in the curve $\{v-g(U)=0\}$ where $U=z/v-1/b$ and $g(U)$ is an analytic
function at the origin that satisfies
\begin{equation}\label{sep}
g(U)=b^6 U^2-\frac{10}{3}b^7U^3+\frac{22}{3}b^8 U^4 +O\left(U^5\right).
\end{equation}
\end{lemma}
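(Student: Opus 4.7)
The plan has four stages, mirroring the structure of Lemma~\ref{lem1}.

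First, I would derive \eqref{CartaVZ} by direct computation. Differentiating $x=v/z$ and $y=1/z$ gives $\dot{x}=(\dot{v}z-v\dot{z})/z^{2}$ and $\dot{y}=-\dot{z}/z^{2}$. Substituting these into \eqref{sisb} yields a linear system for $(\dot{v},\dot{z})$, which is solved to obtain $\dot{v}$ and $\dot{z}$ as rational functions of $(v,z)$ with denominator $z^{3}$. Rescaling time by $dt/d\tau=1/z^{4}$ clears the denominator and produces \eqref{CartaVZ} after a short simplification.

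Second, I would show that the origin is the only critical point. Factoring the second component, $z'=z\bigl[(1+z^{2})(v^{2}-b^{2}z^{2})+vz^{4}\bigr]$. If $z=0$, then $v'=v^{3}$, forcing $v=0$. If $z\neq 0$, then $(1+z^{2})(v^{2}-b^{2}z^{2})=-vz^{4}$; substituting into the first component gives $v'=v(-vz^{4})+(1+v^{2})z^{4}=z^{4}\neq 0$, a contradiction. Since the $2$-jet at the origin vanishes, the local structure must be determined by directional blow-ups. I would perform the $v$-directional blow-up $z=vq$ (with an appropriate time rescaling) and the $z$-directional blow-up $v=zr$, exactly as in the proof of Lemma~\ref{lem1}, obtaining in each chart a system whose critical points on the exceptional divisor are hyperbolic or semi-hyperbolic saddles amenable to the classical Theorem~65 of \cite{An-Le-Go}. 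Gluing the local pictures reproduces Figure~\ref{blowup1}, with the separatrices $\mathcal{S}_{2}$ and $\mathcal{S}_{3}$ entering the origin tangent to the curve $v=0$ from the two half-planes $z/v \gtrless 1/b$.

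Third, to obtain the expansion \eqref{sep}, I use Theorem~\ref{infi}(ii), which identifies the separatrices near infinity with the curve $y=\tilde{\phi}(x-b)/(x-b)^{2}$. Writing $w=x-b$, the chart coordinates become
\begin{equation*}
z=\frac{1}{y}=\frac{w^{2}}{\tilde{\phi}(w)},\qquad v=\frac{x}{y}=\frac{(b+w)w^{2}}{\tilde{\phi}(w)},
\end{equation*}
so that $z/v=1/(b+w)$ and hence $U=z/v-1/b=-w/\bigl(b(b+w)\bigr)$. Inverting this relation gives $w=-Ub^{2}/(1+Ub)$, which is analytic in $U$ at the origin, and also $b+w=b/(1+Ub)$. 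Substituting back yields
\begin{equation*}
v=\frac{b^{5}U^{2}}{(1+Ub)^{3}\,\tilde{\phi}\!\bigl(-Ub^{2}/(1+Ub)\bigr)}=:g(U),
\end{equation*}
which is analytic at $U=0$. Expanding the denominator using \eqref{sepS2S3} and the geometric series for $1/(1+Ub)$ and collecting powers of $U$ reproduces $g(U)=b^{6}U^{2}-\tfrac{10}{3}b^{7}U^{3}+\tfrac{22}{3}b^{8}U^{4}+O(U^{5})$; the leading term $b^{6}U^{2}$ is immediate, and the next two coefficients come from combining the first three Taylor coefficients of $\tilde\phi$ with the expansion of $(1+Ub)^{-3}$.

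The one real obstacle is the blow-up analysis in the second step: the $3$-jet of \eqref{CartaVZ} contains several non-hyperbolic sectors, so one must be careful to check that the blown-up systems have exactly the critical points needed to match Figure~\ref{blowup1}, and that no additional separatrices are introduced on the exceptional divisor. The computation of $g(U)$ itself is routine series manipulation once the substitution $w\mapsto U$ has been inverted.
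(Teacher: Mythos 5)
The decisive gap is in your third stage: it is circular. You obtain \eqref{sep} by substituting the expansion \eqref{sepS2S3} of Theorem~\ref{infi}(ii), but in the paper the logical dependence runs the other way: the proof of Theorem~\ref{infi}(ii) consists exactly of transporting \eqref{sep} to the $(x,y)$-plane via $(v,z)=(x/y,1/y)$ --- your computation run in reverse. At the moment this lemma has to be proved, \eqref{sepS2S3} is not available, so your step three only shows that the two expansions are equivalent under the change of variables; it does not establish either of them. An independent derivation from the vector field is required, and this is where the paper's key idea enters: in the blow-up chart $(v,s)$ with $s=z/v$, the separatrices $\mathcal{S}_2$ and $\mathcal{S}_3$ together constitute the single solution of \eqref{campoVS} passing through $(0,1/b)$, which is a \emph{regular} point of that system. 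Since $\dot s=s^5>0$ near it, this solution is a graph $v=g(s)$ with $g(1/b)=g'(1/b)=0$, and imposing invariance, $\langle\nabla(v-g(s)),\widetilde X\rangle\equiv 0$ along the curve, produces linear recurrences for the coefficients $g^{(i)}(1/b)$, whose first terms give $b^6\left(s-\frac{1}{b}\right)^2-\frac{10}{3}b^7\left(s-\frac{1}{b}\right)^3+\frac{22}{3}b^8\left(s-\frac{1}{b}\right)^4$; blowing down then yields \eqref{sep}.

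Your second stage would also fail as written. The $v$-directional blow-up of \eqref{CartaVZ}, i.e.\ system \eqref{campoVS}, has \emph{no} critical points at all: $\dot s=s^5$ vanishes only on $\{s=0\}$, where $\dot v=-1\neq 0$ (and the $z$-directional chart behaves the same way). So there are no hyperbolic or semi-hyperbolic saddles on the exceptional divisor, and Theorem~65 of \cite{An-Le-Go} has nothing to apply to; the analogy with Lemma~\ref{lem1} breaks down precisely here, and this is a structural feature of the system, not a detail to be checked. The local phase portrait of Figure~\ref{blowup1} is instead obtained from the invariance of the divisor, the sign of $\dot s$, and the two regular points $(0,\pm 1/b)$ where the flow is tangent to the divisor ($\dot v=0$, $\ddot v>0$ there), followed by the blow-down $(v,z)=(v,sv)$ with attention to the orientation reversal caused by the time factor $-v^3$. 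Relatedly, your tangency claim is wrong: by \eqref{sep} itself, $z/v\to 1/b$ along $\mathcal{S}_2$ and $\mathcal{S}_3$, so they reach the origin tangent to the line $v=bz$, not to $\{v=0\}$; they are the two halves $U\gtrless 0$ of one smooth curve through the origin. Recognizing $\mathcal{S}_2\cup\mathcal{S}_3$ as a solution through a regular point of the blown-up system is the one idea that repairs both stages: it determines the phase portrait and makes the expansion computable (and it is also what powers the numerical scheme of Remark~\ref{nume}).
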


\begin{proof}
\begin{figure}[h]
\centering\epsfig{file=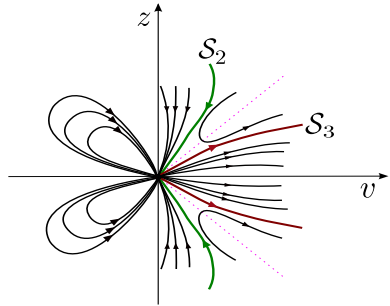,width=5cm,height=4cm}
\caption{Topological local phase portrait of system \eqref{CartaVZ}.
All the solutions are tangent to the $v$-axis but for aesthetical
reasons this fact is not showed in the figure.} \label{blowup1}
\end{figure}

From the expression of system \eqref{CartaVZ} it is clear that the origin is its
unique critical point. As in Lemma \ref{lem1} we will use the directional blow-up
technique to determine its structure since the linear part of the system at this
point is identically zero.

It is well-known, see \cite{An-Le-Go}, that since at the origin
$z'v-v'z=-z^5+O(z^6)$, all the solution, arriving or leaving the
origin have to be tangent to $z=0$. So it suffices to consider the
$v$-directional blow-up  given by the transformation $v=v$, $s=z/v$.
Performing it, together with the change of time $dt/d\tau =-{v^3}$,
system \eqref{CartaVZ} is transformed into
\begin{equation}
\left\{\begin{array}{l}\label{campoVS}
\dot v=-(1+v^2s^2)(1-b^2s^2)-vs^4(1+v^2),\\
\dot s=s^5.
\end{array}\right.
\end{equation}
This system has not critical points. However, by studying the vector field on the
$s$-axis we will obtain relevant information for knowing the phase portrait of
system \eqref{CartaVZ}. If $s=0$ then $\dot{v}=-1$ and $\dot{s}=0$, that is, the
$v$ axis is invariant. If $v=0$ then $\dot{v}=-1+b^2s^2$ and $\dot{s}=s^5$, this
implies that $\dot{v}=0$ if $s=\pm1/b$. In addition, a simple computation shows
that $\ddot{v}>0$ at the points $(0,\pm1/b)$. Therefore the solutions through
these points are as it is showed in Figure~\ref{bw11}.(a), and by the continuity
of solutions with respect to initial conditions, we have that the phase portrait
of system \eqref{CartaVZ}, close to these points, is as it is showed in
Figure~\ref{bw11}.(b).

\begin{figure}[h]
\begin{tabular}{cc}
\hskip 1cm \epsfig{file=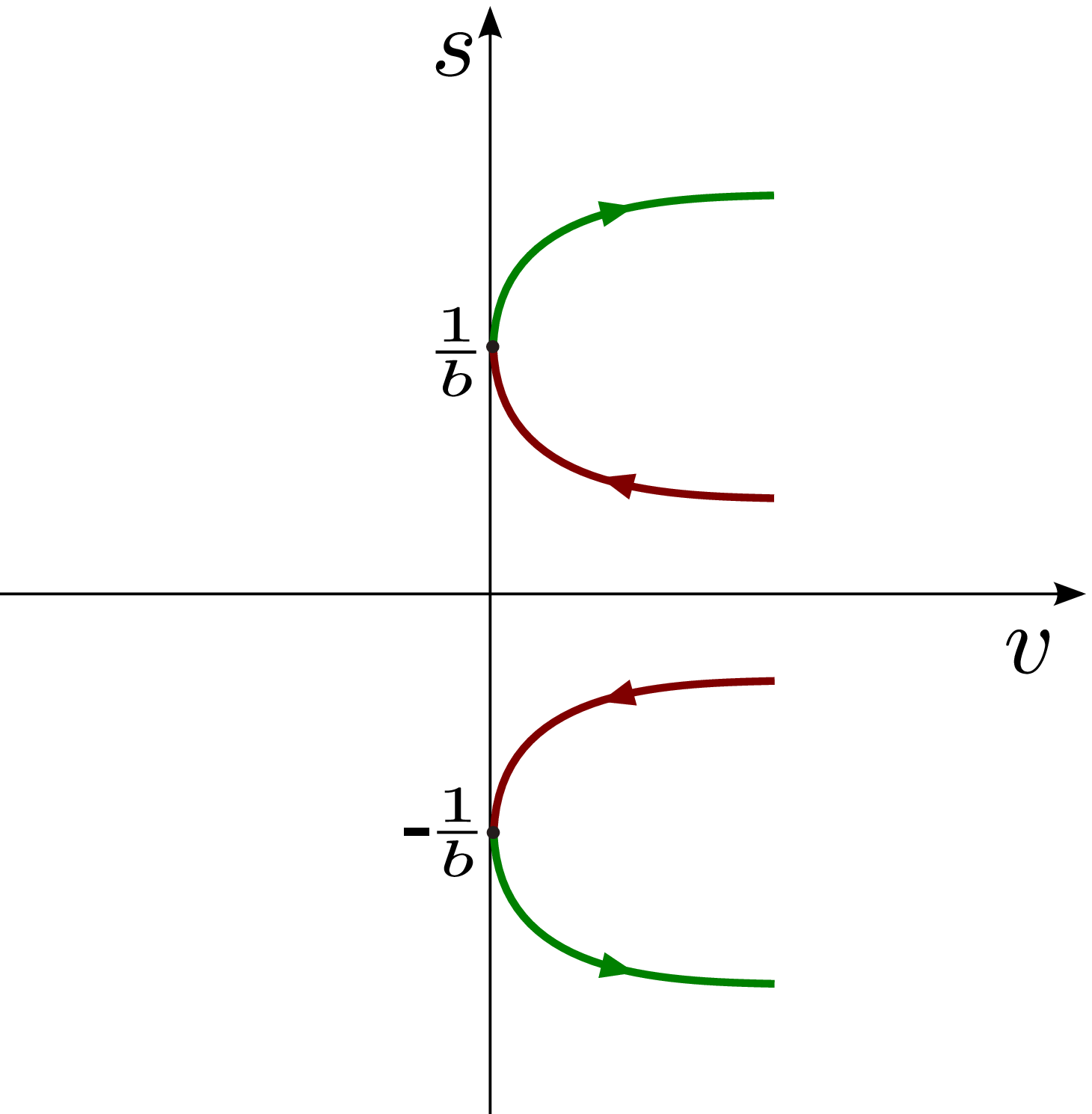,width=3.8cm,height=4cm} &
\hskip 1cm \epsfig{file=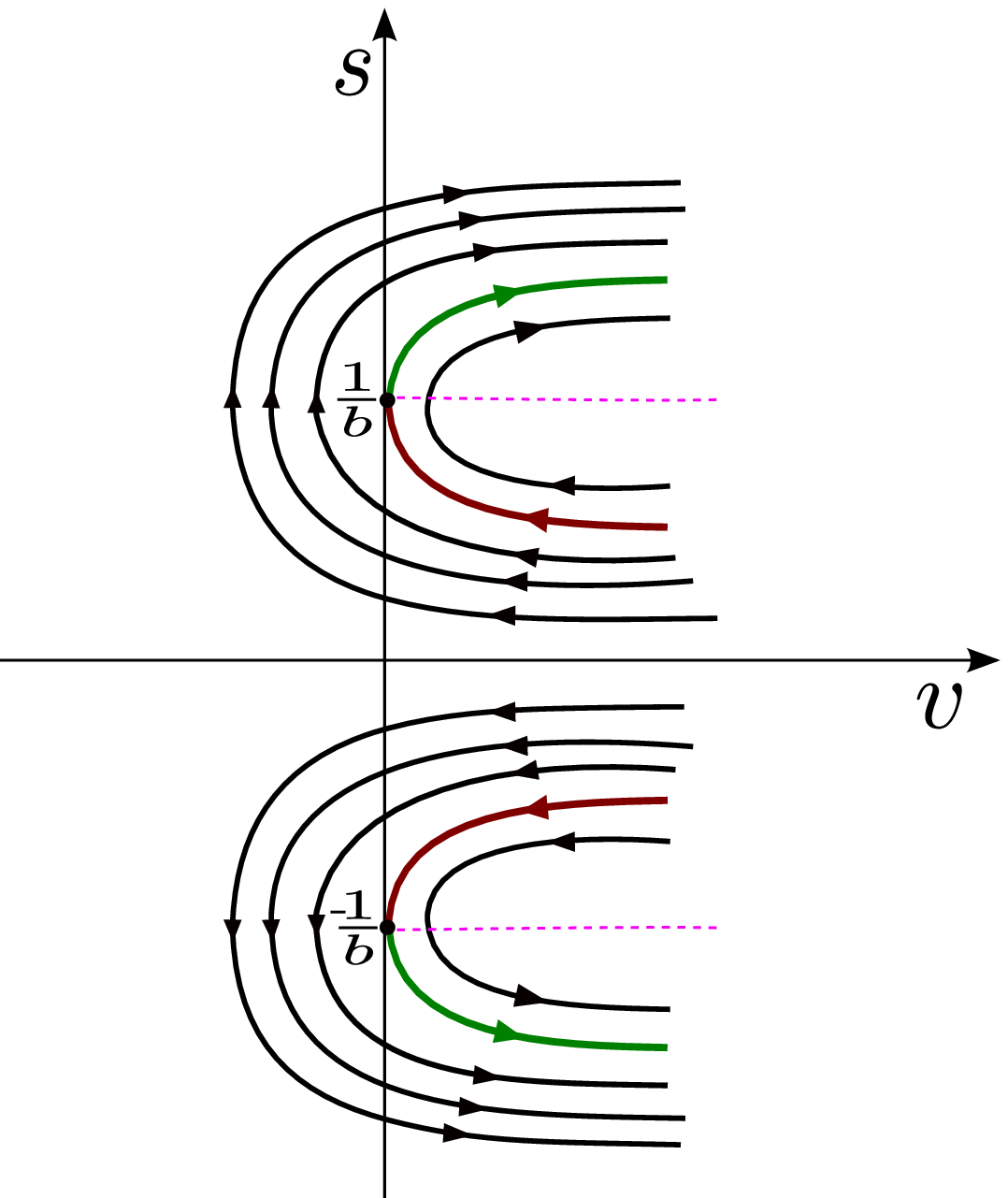,width=3.8cm,height=4cm} \\
\hskip 1cm (a) &\hskip 1cm (b)
\end{tabular}
\caption{Local phase portrait of system~\eqref{campoVS}.}\label{bw11}
\end{figure}

Then by using the transformation $(v,z)=(v,sv)$ and the phase portrait showed in
Figure~\ref{bw11}.(b) we can obtain the phase portrait of system \eqref{CartaVZ}.
Recall that the mapping swaps the second and the third quadrants in the
$v$-directional blow-up. In addition, taking into account the change of time
$dt/d\tau =-{v^3}$ it follows that the vector field in the first and fourth
quadrant of the plane $(v,z)$ has the opposite direction to the showed in the
$(v,s)$-plane. Therefore the local phase portrait of \eqref{CartaVZ} is the showed
in Figure~\ref{blowup1}.

\begin{figure}[h]
\centering\epsfig{file=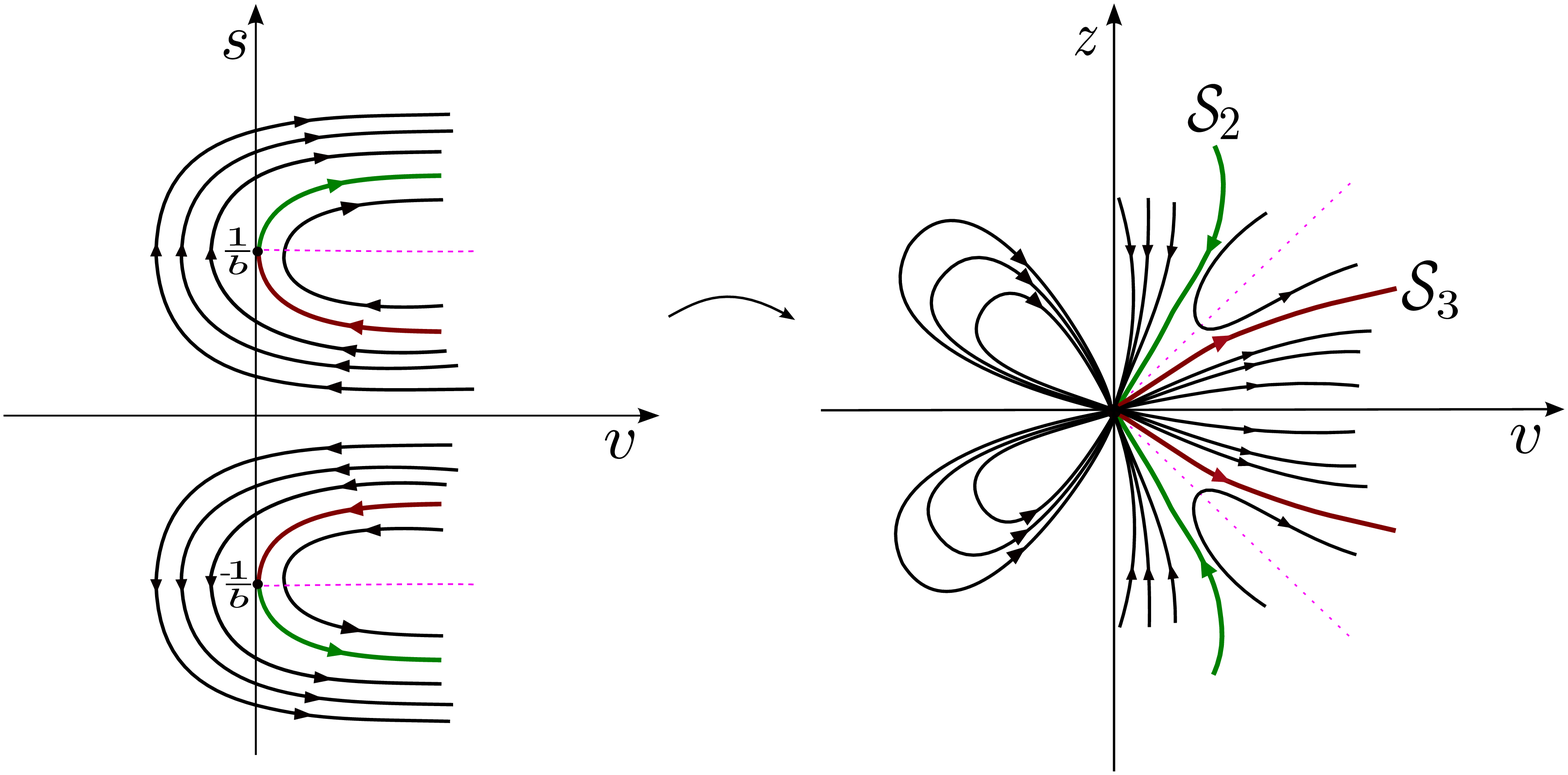,width=7cm,height=4cm}
\caption{Transformation between system~\eqref{campoVS} and
system~\eqref{CartaVZ}.} \label{blowup11}
\end{figure}

To show that the separatrices $\mathcal{S}_2$ and $\mathcal{S}_3$
are contained in the curve \eqref{sep} we proceed as follows. First,
we will obtain the curve that contains the solution through the
point $(0,1/b)$ in the plane $(v,s)$. Second, by using the
transformation $(v,z)=(v,sv)$ we will obtain the corresponding curve
in the $(v,z)$-plane  and we will show that such curve  is exactly
the curve given by \eqref{sep}.

Since $\dot{s}$ is positive in $(0,\infty)$, the solution through
the point $(0,1/b)$ (respectively $(0,-1/b)$) is contained in the
curve $\{v-g(s)=0\}$ (respectively $\{v-\tilde{g}(s)=0\}$), where
$g(s)$ (respectively $\tilde{g}(s)$) is an analytical function
defined in an open neighborhood of the point, moreover it is clear
that $g(1/b)=0$ and $g'(1/b)=0$. Consider the Taylor series of
$g(s)$ around $(1/b)$:
\begin{equation}\label{tayg}
g(s)=\sum_{i=2}^{\infty}\frac{g^{(i)}\left(\frac{1}{b}\right)}{i!}
\left(s-\frac{1}{b}\right)^i.
\end{equation}
Since the curve $\{v-g(s)=0\}$ is invariant then $\langle\nabla
(v-g(s)),\widetilde{X}\rangle=0$ at all the points of $\{v-g(s)=0\}$, where
$\widetilde{X}$ is the vector field associated to system \eqref{campoVS}. Thus, we
have a function, $\langle\nabla (v-g(s)),\widetilde{X}\rangle$, for which all its
coefficients have to be zero. From this observation we obtain  linear recurrent
equations in the coefficients, $g^{(i)}(1/b)$ of $g(s)$. Simple computations show
that the first $3$ terms of the Taylor series of $g(s)$ are:
\begin{center} $
b^6\left(s-\frac{1}{b}\right)^2-\frac{10}{3}b^7\left(s-\frac{1}{b}\right)^3
+\frac{22}{3}b^8\left(s-\frac{1}{b}\right)^4$.
\end{center}
Thus, in the plane $(v,z)$, the curve corresponding to $\{v-g(s)=0\}$ is
\begin{center} $
\left\{v-b^6\left(\frac{z}{v}-\frac{1}{b}\right)^2+
\frac{10}{3}b^7\left(\frac{z}{v}-\frac{1}{b}\right)^3
-\frac{22}{3}b^8\left(\frac{z}{v}-\frac{1}{b}\right)^4
+O\left(\left(\frac{z}{v}-\frac{1}{b}\right)^5\right)=0\right\}$.
\end{center}
Finally, if  $U=z/v-1/b$,  we obtain \eqref{sep}.
\end{proof}

\begin{remark}\label{nume} The proof of the above lemma gives a
natural way for finding a numerical approximation of the value
$b^*$. Notice that in the coordinates $(v,s)$ the point $(0,1/b)$
corresponds to both separatrices $\mathcal{S}_2$ and
$\mathcal{S}_3$. Since it is a regular point we can start our
numerical method (we use a Taylor method) without initial errors and
then  follow the flow of the system, both forward and backward for
given fixed times, say $t^+>0$ and $t^-<0$. We arrive to the points
$(v^\pm,s^\pm)$ with $s^\pm\neq 0$ for $t=t^\pm$, respectively.
These two points have associated two different points
$(x^\pm,y^\pm)$  in the plane $(x,y)$, because of the transformation
$(v,s)=(x/y,1/x)$. Now, we integrate numerically the system
\eqref{sisb} with initial conditions $(x^\pm,y^\pm)$  to continue
obtaining approximations of the separatrices $\mathcal{S}_2$ and
$\mathcal{S}_3$, respectively. The next step is to compare the
points of intersection $\tilde{x}^+=\tilde{x}^+(b)<0$ and
$\tilde{x}^-=\tilde{x}^-(b)>0$ of these approximations with the
$x$-axis.

We consider the function $b\to \Pi(b):=x^+(b)+\tilde{x}^-(b)$ and we
use the bisection method to find one approximate zero of\,  $\Pi$.
Note that if\, $\Pi(\bar b)=0$ then $\mathcal{S}_2'=\mathcal{S}_3$
and by the symmetry of the system $\mathcal{S}_3'=\mathcal{S}_2,$
and therefore $b^*=\bar b$. Taking $b_0=0.8062901027$, $t^+=0.05$
and $t^-=-0.5$ we obtain that
$\tilde{x}^+(b_0)+\tilde{x}^-(b_0)\approx -4.58036036\times10^{-11}$
and so $b^*\approx b_0$.

Following a similar procedure, but now using Lemma~\ref{lem1} to have an initial
condition almost on $\mathcal{S}_1$, we get that $\hat{b}\approx 0.8058459066$.
\end{remark}

\begin{proof}[Proof of Theorem~\ref{infi}] $($i$)$
The result follows because system~\eqref{sisb} is invariant by the transformation
$(x,y)\rightarrow(-x,-y)$.

$($ii$)$. From \eqref{sep} and by using the change of variables $(v,z)=(x/y,1/y)$
we obtain that the separatrices $\mathcal{S}_2$ and $\mathcal{S}_3$ are contained
in the curve {\footnotesize
$$
\left\{\frac{x}{y}-b^6\left(\frac{1}{x}-\frac{1}{b}\right)^2+
\frac{10}{3}b^7\left(\frac{1}{x}-\frac{1}{b}\right)^3
-\frac{22}{3}b^8\left(\frac{1}{x}-\frac{1}{b}\right)^4
+O\left(\left(\frac{1}{x}-\frac{1}{b}\right)^5\right)=0\right\},
$$
} or equivalently \begin{equation}\label{equ1}
\left\{y-\overline{\phi}(x)=0\right\},
\end{equation}
where
$$
\overline{\phi}(x)=\frac{x}{b^6\left(\frac{1}{x}-\frac{1}{b}\right)^2-
\frac{10}{3}b^7\left(\frac{1}{x}-\frac{1}{b}\right)^3
+\frac{22}{3}b^8\left(\frac{1}{x}-\frac{1}{b}\right)^4
+O\left(\left(\frac{1}{x}-\frac{1}{b}\right)^5\right)}.
$$
We can write the function $\overline{\phi}(x)$ as
\begin{equation}\label{equ2}
\overline{\phi}(x)=\left(\frac{1}{(x-b)^2}\right)\overline{\phi}_1(x),
\end{equation}
where
$$
\overline{\phi}_1(x)=\frac{b^2x^3}{b^6+\frac{10}{3}b^7\left(\frac{x-b}{bx}\right)
+\frac{22}{3}b^8\left(\frac{x-b}{bx}\right)^2
+O\left(\left(\frac{x-b}{bx}\right)^3\right)}.
$$
The function $\overline{\phi}_1(x)$ is analytical at $x=b$ and it is
not difficult to see that it has the following Taylor expansion
$$
\overline{\phi}_1(x)=\frac{1}{b}-\frac{(x-b)}{3b^2}+\frac{(x-b)^2}{9b^3}-
\frac{359(x-b)^3}{27b^4}+O((x-b)^4).
$$
Then \eqref{equ2} can be written as
$$
\overline{\phi}(x)=\frac{1}{b(x-b)^2}-\frac{1}{3b^2(x-b)}+\frac{1}{9b^3}-
\frac{359}{27b^4}(x-b)+O((x-b)^2).
$$
Hence from \eqref{equ1} and taking $\overline{\phi}(x)=\tilde{\phi}(x-b)/(x-b)^2$
we complete the proof.

The proof of $($iii$)$  follows by applying the previous ideas, considering the
expression given by \eqref{unsvar1} and the change of variables $(u,z)=(y/x,1/x)$.
\end{proof}

\section{Proof of Theorem~\ref{mteo}}

We start proving a preliminary result that is a consequence of some general
properties of semi-complete family of rotated vector fields with respect one
parameter, SCFRVF for short, see \cite{Duff,Perko1}.

\begin{proposition}\label{fr} Consider system~\eqref{sisb} and assume that for $b=\bar b>0$ it
has no limit cycles. Then there exists  $0<b^*\le \bar b$ such that the system has
limit cycles if and only if $b\in(0,b^*)$.  Moreover, for $b=b^*$ its phase
portrait is like~(iv) in Theorem~\ref{mteo} and when $b>b^*$ it is like~(v) in
Theorem~\ref{mteo}.

\end{proposition}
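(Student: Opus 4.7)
The plan is to show that \eqref{sisb} is a semi-complete family of rotated vector fields (SCFRVF) with respect to $b$ on $\mathbb{R}^{2}\setminus\{y=0\}$ and then invoke the monotonicity theory of Duff~\cite{Duff} and Perko~\cite{Perko1}. Writing $X_{b}=(P,Q)$, a direct computation gives
\[
\det\begin{pmatrix}P & Q\\ \partial_{b}P & \partial_{b}Q\end{pmatrix}
= y\cdot 2b(y+y^{3}) = 2b\,y^{2}(1+y^{2}),
\]
which is strictly positive for $b>0$ and $y\neq 0$, establishing the SCFRVF property. The general theory then asserts that any branch of limit cycles forms a monotone family in $b$ and can terminate only by (i) collapsing to a critical point, (ii) becoming a graphic (possibly at infinity), or (iii) colliding with another limit cycle as a semi-stable cycle.

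The next step is to locate the Hopf birth at $b=0$. The origin is the unique finite critical point, and its linearization has trace $b^{2}$ and determinant $1$, so it is unstable (in fact a focus for $b\in(0,\sqrt{2})$) for every $b>0$. At $b=0$, however, $V=x^{2}+y^{2}$ satisfies $\dot V=-2x^{2}y^{2}(1+y^{2})\le 0$, making it a strict Lyapunov function outside the axes and hence showing that the origin is a stable weak focus there. Consequently a stable limit cycle is created for $b>0$ small, and together with the hypothesis that no limit cycle exists at $b=\bar b$ the number
\[
b^{*}\;:=\;\sup\{b>0:\ \eqref{sisb}\ \text{has a periodic orbit}\}
\]
is well defined and satisfies $0<b^{*}\le\bar b$.

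The heart of the argument is to relate $b^{*}$ to a heteroclinic bifurcation at infinity. By the monotone rotation of the family, the relative positions of the separatrices $\mathcal{S}_{2},\mathcal{S}_{3},\mathcal{S}_{2}',\mathcal{S}_{3}'$ classified in Corollary~\ref{ccc} depend monotonically on~$b$, so there is exactly one value of~$b$ for which $\mathcal{S}_{3}=\mathcal{S}_{2}'$ (and, by the symmetry $(x,y)\mapsto(-x,-y)$, simultaneously $\mathcal{S}_{2}=\mathcal{S}_{3}'$); this gives the heteroclinic polycycle at infinity of configuration~(iv) of Figure~\ref{posRela}. On one side of this parameter the configuration is (i), (ii) or (iii), and since the origin is a repellor and the separatrix arcs at infinity close up into a piecewise smooth positively invariant annular region around it, Poincar\'{e}--Bendixson forces the existence of a limit cycle; on the other side the configuration is (v) and no such invariant annulus around the origin can be constructed, so no limit cycle is possible. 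Matching this dichotomy with the hypothesis at $b=\bar b$ forces the heteroclinic value to coincide with $b^{*}$, and produces the asserted phase portraits~(iv) at $b=b^{*}$ and~(v) for $b>b^{*}$.

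The main obstacle, and the reason the argument is not purely formal, is the geometric verification that on the ``cycle side'' of the heteroclinic parameter the separatrices at infinity really do bound a forward-invariant region surrounding the origin. I would handle this using the asymptotic expansions~\eqref{sepS2S3}--\eqref{sepS1} of Section~\ref{infin} to control the separatrices near infinity, combined with the strict positivity of the rotation determinant computed above to transport the trapping property monotonically in~$b$. Once this is in place, the SCFRVF monotonicity delivers the trichotomy and the proposition follows.
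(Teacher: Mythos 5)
Your opening matches the paper: the rotation computation (the paper phrases it as $\partial_{b^2}\arctan(Q_b/P_b)=y^2(1+y^2)/(P_b^2+Q_b^2)\ge 0$, equivalent to your determinant) and the Hopf birth of a stable cycle at $b=0$ are exactly the paper's first two steps. The gap comes right after. You define $b^*:=\sup\{b>0:\ \eqref{sisb}\ \text{has a periodic orbit}\}$ and assert $0<b^*\le\bar b$, but the hypothesis only says there is no cycle at the single value $\bar b$; nothing you have written excludes cycles for some $b>\bar b$, or cycles that vanish and reappear below $\bar b$ on a branch not connected to the Hopf branch. This is precisely the nontrivial content of the proposition. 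The paper closes it by combining Perko's \emph{planar termination principle} with the \emph{non-intersection property}: the branch of cycles born at the origin sweeps out a $1$-connected open set $\mathcal{U}$ whose boundaries are the origin and a cycle of separatrices; since the origin is the only finite critical point, that graphic must be unbounded, so $\mathcal{U}$ fills the plane from the origin out to infinity; any cycle at any other parameter value must surround the origin (the unique critical point), hence would cross $\mathcal{U}$ and intersect a cycle of the family, contradicting non-intersection. You list the termination alternatives at the start but never actually deploy this argument, so the "only if" direction and the bound $b^*\le\bar b$ are unproved in your proposal.

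The second gap is your treatment of the phase portraits. You claim that on the far side of the heteroclinic value, configuration (v) of Figure~\ref{posRela} admits "no invariant annulus around the origin, so no limit cycle is possible." Failure to construct a particular trapping region proves nothing, and in fact configuration (v) does \emph{not} by itself exclude limit cycles: since the origin is a repellor, Poincar\'e--Bendixson and index arguments applied to (v) only force an \emph{even} number of cycles counted with multiplicity --- this is exactly what the paper proves as $(\mathbf{R_3})$ in Proposition~\ref{ppp}, and it is compatible with two cycles. With this implication gone, your dichotomy collapses, and so does the identification of the heteroclinic parameter with $b^*$. (Also, monotone rotation of the separatrices gives \emph{at most} one heteroclinic value; its \emph{existence} needs the endpoint configurations, which inside this proposition you do not have.) The paper's route is the reverse of yours: the portrait (iv) at $b=b^*$ is not found by a separatrix-crossing argument but falls out of the termination principle --- the branch of cycles terminates at $b^*$ precisely on an unbounded graphic, and among the configurations of Corollary~\ref{ccc} only (iv) contains one --- while (v) for $b>b^*$ follows because, in a rotated family, a graphic (like a cycle) can occur for at most one parameter value.
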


\begin{proof} It is easy to see that the system has a limit cycle for $b\gtrsim
0$, which appears from the origin through an Andronov-Hopf bifurcation.

If we denote by $X_b(x,y)=(P_b(x,y), Q_b(x,y))$ the vector field
associated to (\ref{sisb}) then
\begin{align*}
\frac{\partial}{\partial b^2} \arctan \left( \frac
{Q_b(x,y)}{P_b(x,y)} \right) &=\frac{P_b(x,y)\frac{\partial
Q_b(x,y)}{\partial b^2}-Q_b(x,y)\frac{\partial P_b(x,y)}{\partial
b^2} }{P^2_b(x,y)+Q^2_b(x,y)} \\&=\frac{
y^2(1+y^2)}{P^2_b(x,y)+Q^2_b(x,y)}\ge0.
\end{align*}
This means that system (\ref{sisb}) is a  SCFRVF  with respect to
the parameter $b^2$.

We will recall  two  properties of SCFRVF. The first one is the so
called {\it non-intersection property.} It asserts that if
$\gamma_{1}$ and $\gamma_{2}$ are limit cycles corresponding to
different values of $b,$ then $\gamma_{1}\cap \gamma_{2}=\emptyset.$

The second one is called {\it planar termination principle}:
\cite{Perko2, Perko3} if varying the parameter we follow with
continuity a limit cycle generated from a critical point ${\bf p},$
we get that the union of all the limit cycles covers a 1-connected
open set $\mathcal{U},$ whose boundaries are ${\bf p}$ and a cycle
of separatrices of $X_b.$ The corners of this cycle of separatrices
are finite or infinite critical points of $X_b.$ Since in our case
$X_b$ only has  the origin as a finite critical point we get that
$\mathcal{U}$ has to be unbounded. Notice that in this definition,
when a limit cycle goes to a semistable limit cycle then we continue
the other limit cycle that has collided with it. This limit cycle
has to exist, again by the properties of SCFRVF.

If for some value of $b=\bar b>0$ the system has no limit cycle it
means that the limit cycle starting at the origin for $b=0,$ has
disappeared for some $b^*,$
 $0<b^*\le \bar b$  covering the whole set $\mathcal{U}.$ Since $\mathcal{U}$ fills
 from  the
origin until infinity, from the non intersection property, the limit cycle cannot
either exist for $b\ge b^*,$ as we wanted to prove.

Since for $b>0$ the origin is a repellor, by Corollary~\ref{ccc} we know by the
Poincar\'{e}-Bendixson Theorem that the phase portraits (i),(ii) and (iii) in
Figure~\ref{Sphere} have at least one limit cycle. Then,  the phase portraits for
$b\ge b^*$ have to be like (iv) or (v) in the same figure. Since the phase
portrait~(iv)  is the only one having a cycle of separatrices it corresponds to
$b=b^*$. Again by the properties of SCFRVF, the phase portrait (iv) does not
appear again for $b>b^*$. Hence, for $b>b^*$ the phase portrait has to be like~(v)
and the proposition follows.
\end{proof}

\begin{remark} In Lemma \ref{b=1} we will give a simple proof that when $b=1$
system~\eqref{sisb} has no limit cycles, based on the fact that for this value of
the parameter it has the hyperbola $xy+1=0$ invariant by the flow. From the above
proposition  it follows that $b^*<1$. This result already improves the upper bound
of $b^*$, given in~\cite{Xian}, $\sqrt[6]{9\pi^2/16}\approx 1.33.$
Theorem~\ref{mteo} improves again this upper bound, but as we will see, the proof
is much more involved.
\end{remark}

\begin{proof}[Proof of Theorem \ref{mteo}] Recall that for $a\le0$ the
function $V(x,y)=x^2+y^2$ is a global
Lyapunov function for system~\eqref{sisa} and therefore the origin is global
asymptotically stable. Then it is easy to see that
its phase portrait is like (o) in Figure~\ref{Sphere}.

To prove the theorem we list some of the key points that we will use and that will be
proved in the forthcoming sections:

\begin{itemize}
\item[(${\mathbf R_1}$)] System~\eqref{sisb} has at most one  limit cycle
for $b\in(0,0.817]$ and when it exists it is hyperbolic and attractor, see
Section~\ref{uni}.

\item[(${\mathbf R_2}$)]  System~\eqref{sisb} has an odd number of limit
cycles, with multiplicities taken into account, when $b\le0.79$ and
 the configuration of its
separatrices is like (i) in Figure~\ref{posRela}, see Proposition~\ref{ppp}.

\item[(${\mathbf R_3}$)] System~\eqref{sisb} has an even number of limit
cycles, with multiplicities taken into account, when $b=0.817$ and the
configuration of its separatrices is like (v) in Figure~\ref{posRela}, see again
Proposition~\ref{ppp}.
\end{itemize}

The theorem for $b\ge b^*$ is a consequence of Proposition~\ref{fr}.
Notice that again by this proposition and (${\mathbf R_3}$),
$b^*<0.817$. Hence, the limit cycles can exist only when
$b\in(0,b^*)\subset(0,0.817]$ and by (${\mathbf R_1}$) when they
exist then there is only one and it is hyperbolic and attractor.

As a consequence of (${\mathbf R_2}$) and the uniqueness and hyperbolicity of the
limit cycle  we have that the phase portrait for $b\le0.79$ is like (i) in
Figure~\ref{Sphere}.

To study the phase portraits for the remaining values of $b$, that
is $b\in(0.79,b^*)$, first notice that all of them have exactly one
limit cycle, which is hyperbolic and stable. So it only remains to
know the behavior of the infinite separatrices. We denote by
$x_2(b)$ and ${x}'_3(b)$ the points of intersection of the
separatrices $\mathcal{S}_2$ and $\mathcal{S}'_3$ of system
\eqref{sisb} with the $x$-axis (when they exist), see also the
forthcoming Figure~\ref{BarS3Com}. Notice that for $b>b^*$,
$x_3'(b)<x_2(b)<0$ and $x_3'(b^*)=x_2(b^*)<0.$ The properties of the
SCFRVF imply that $x_2(b)$ is monotonous increasing  and that
$x_3'(b)$ is monotonous decreasing. Hence for $b\lesssim b^*$ the
phase portrait of the system is like~(iii) in Figure~\ref{Sphere}.
Since we already know that for $b=0.79$ the phase portrait is like
(i), it should exists at least one value, say $b=\hat b$, with phase
portrait (ii). Since for SCFRVF the solution for a given value of
$b$, say $b=\bar b$, becomes a  curve without contact for the system
when $b\ne\bar b$, we have that the phase portraits corresponding to
heteroclinic orbits, that is (ii) and (iv) of Figure~\ref{Sphere},
only appear for a single value of $b$ (in this case $\hat b$ and
$b^*$, respectively). Therefore, the theorem follows.
\end{proof}

\section{Uniqueness of the limit cycle for $b\le817/1000$}\label{uni}
In this section we will prove the uniqueness of the limit cycle of
system \eqref{sisb} when $b\le0.817$. The idea of the proof is to
find a suitable rational Dulac function for applying the following
generalization of Bendixson--Dulac criterion.

\begin{proposition}\label{GenBenDul}
Consider the $C^1$-differential system
\begin{equation}\label{sisPQ}
\left\{\begin{array}{l}
\dot{x}=P(x,y),\\
\dot{y}=Q(x,y),
\end{array}\right.
\end{equation}
and let $\mathcal{U}\subset\R^2$ be   an open region  with boundary formed by
finitely many algebraic curves. Assume that:
\begin{enumerate}[(I)]
\item There exists a  rational function $V(x,y)$
such that
\begin{equation}\label{M} M:=\frac{\partial V}{\partial
x}P+\frac{\partial V}{\partial y}Q-V\left(\frac{\partial P}{\partial
x}+\frac{\partial Q}{\partial y}\right)
\end{equation}
does not change sign on $\mathcal{U}$. Moreover $M$ only vanishes on  points, or
curves that are not invariant by the   flow of~\eqref{sisPQ}.

\item All the connected components of
 $\mathcal{U}\setminus\{V=0\}$, except perhaps one, say \,$\widetilde{\mathcal U}$,  are
 simple connected.
 The  component \,$\widetilde{\mathcal U}$, if exists,  is 1-connected.
\end{enumerate}
 Then the system  has at
most one limit cycle in $\mathcal{U}$ and when it  exists is  hyperbolic and it is
contained in\, $\widetilde{\mathcal U}$. Moreover its stability is given by the
sign of $-VM$ on~\,$\widetilde{\mathcal U}$.
\end{proposition}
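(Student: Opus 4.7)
The plan is to use $B:=1/V$ as a classical Dulac function on each connected component of $\mathcal{U}\setminus\{V=0\}$. A direct computation gives
$$\frac{\partial(BP)}{\partial x}+\frac{\partial(BQ)}{\partial y}=-\frac{M}{V^2},$$
so by hypothesis~(I) this divergence has constant sign on every component of $\mathcal{U}\setminus\{V=0\}$ and does not vanish on any curve invariant by the flow.

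The first step is to show that any periodic orbit of the system contained in $\mathcal{U}$ must lie in a single connected component of $\mathcal{U}\setminus\{V=0\}$. For this, I observe that on the algebraic set $\{V=0\}$ one has $M=V_xP+V_yQ=\langle\nabla V,X\rangle$, which measures the transversality between the flow and $\{V=0\}$. The constant-sign hypothesis on $M$, together with the assumption that $M$ only vanishes on non-invariant curves (so $\{V=0\}$ itself cannot be invariant), implies that the flow crosses $\{V=0\}$ always in the same direction up to isolated tangencies, which is incompatible with a periodic orbit crossing it. Next I would run the classical Bendixson--Dulac argument componentwise. If $\gamma$ is a periodic orbit contained in a simply connected component of $\mathcal{U}\setminus\{V=0\}$, then $\gamma$ bounds a disk $D$ inside that component, and since the one-form $B(P\,dy-Q\,dx)$ vanishes on trajectories, Green's theorem yields $\iint_D\operatorname{div}(BP,BQ)\,dA=0$, contradicting the constant sign of the integrand. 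Hence every limit cycle must lie in $\widetilde{\mathcal{U}}$, and it must wind around the hole of this 1-connected region (otherwise the same disk argument applies). If two limit cycles $\gamma_1,\gamma_2$ existed in $\widetilde{\mathcal{U}}$, the annular region $A\subset\widetilde{\mathcal{U}}$ between them would satisfy $\iint_A\operatorname{div}(BP,BQ)\,dA=\oint_{\gamma_1}-\oint_{\gamma_2}=0$ by Green's theorem, again contradicting the constant sign of the integrand. Hence there is at most one limit cycle in $\mathcal{U}$, and it is contained in $\widetilde{\mathcal{U}}$.

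For hyperbolicity and stability, I would compute the characteristic exponent along the limit cycle $\gamma$. From $\frac{d}{dt}V(\gamma(t))=V_xP+V_yQ=M+V\operatorname{div}(X)$ along $\gamma$, dividing by $V$ (which has constant nonzero sign on $\gamma\subset\widetilde{\mathcal{U}}$) and integrating over one period $T$ yields
$$\int_0^T\operatorname{div}(X)(\gamma(t))\,dt=-\int_0^T\frac{M(\gamma(t))}{V(\gamma(t))}\,dt,$$
since the term $\frac{d}{dt}\log|V(\gamma(t))|$ integrates to zero over a period. Because $M/V$ has constant sign on $\gamma$ and is not identically zero, the characteristic exponent is strictly of one sign, so $\gamma$ is hyperbolic; it is attracting when $-VM<0$ on $\widetilde{\mathcal{U}}$ and repelling when $-VM>0$, matching the claimed stability criterion. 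The main technical subtlety is the precise handling of the isolated tangencies between the flow and $\{V=0\}$ in the first step, which requires a careful local topological argument to exclude periodic orbits that touch $\{V=0\}$ at such points without crossing.
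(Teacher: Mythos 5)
Your overall architecture is correct and is essentially the same route as the one behind the paper's statement: the paper does not reprove this proposition but invokes \cite{Ga-Gi-2}, whose proof likewise uses $1/V$ as a Dulac function (cf.\ Remark~\ref{rrr}), the identity $\operatorname{div}(P/V,Q/V)=-M/V^{2}$, Green's theorem on the components of $\mathcal{U}\setminus\{V=0\}$, and the characteristic-exponent identity you derive. Your divergence computation, the disk and nested-annulus arguments, and the stability computation $\int_0^T\operatorname{div}(X)(\gamma(t))\,dt=-\int_0^T (M/V)(\gamma(t))\,dt$ are all sound; in that last step you should also state explicitly why $M\not\equiv0$ along the cycle, namely that the cycle is itself an invariant curve and hypothesis~(I) forbids $M$ from vanishing identically on invariant curves.

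The genuine gap is the step you yourself flag and postpone: proving that a periodic orbit in $\mathcal{U}$ cannot meet $\{V=0\}$. Your sketch (``the flow crosses $\{V=0\}$ always in the same direction up to isolated tangencies'') is not a proof, and the intended local topological argument cannot be carried out as stated. First, tangencies need not be isolated: hypothesis~(I) allows $M$ to vanish on whole non-invariant curves, and such a curve may meet, or even coincide along an arc with, $\{V=0\}$. Second, even with isolated tangencies the pointwise sign condition is too weak: setting $v(t):=V(\gamma(t))$, the hypothesis only gives $v'(t)=M(\gamma(t))\ge 0$ \emph{at the zeros} of $v$, and a function such as $-\sin^{3}t$ satisfies exactly this while still changing sign; so a priori a periodic orbit could cross $\{V=0\}$ precisely at points where $M=0$. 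The clean fix avoids topology altogether: along any orbit, $v'(t)=M(\gamma(t))+\operatorname{div}X(\gamma(t))\,v(t)$, hence $w(t):=v(t)\,e^{-\int_0^t\operatorname{div}X(\gamma(s))\,ds}$ satisfies $w'(t)=M(\gamma(t))\,e^{-\int_0^t\operatorname{div}X(\gamma(s))\,ds}$, which (say for $M\ge0$ on $\mathcal{U}$) makes $w$ nondecreasing. If a periodic orbit of period $T$ touched $\{V=0\}$ at time $t_0$, then $w(t_0)=w(t_0+T)=0$, so $w\equiv0$ on $[t_0,t_0+T]$, forcing $M\equiv0$ along $\gamma$; since $\gamma$ is invariant this contradicts hypothesis~(I). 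This closes your first step, guarantees every periodic orbit lies entirely in one component of $\mathcal{U}\setminus\{V=0\}$ (so $1/V$ is smooth on the regions where you apply Green's theorem), and then the rest of your argument goes through.
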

The above statement is a simplified version of the one given in \cite{Ga-Gi-2}
adapted to our interests. Similar results  can be seen in \cite{che,Ga-Gi,Llo,
Ya}.

\begin{remark}\label{rrr} Looking at the proof of
Proposition~\ref{GenBenDul} we also know that:
\begin{enumerate}[(i)]
\item The Dulac function used in the proof is $1/V.$
\item In the region $\mathcal{U}$, the curve $\{V(x,y)=0\}$ is without contact for the
flow of~\eqref{sisPQ}. In particular, by the Bendixson-Poincar\'{e}
Theorem,  the ovals of the set $\{V(x,y)=0\}$ must surround some of
the critical points of the vector field.
\end{enumerate}
\end{remark}

To give an idea of how we have found the  function $V$  that we will use in our
proof we will first study  the van der Pol system and then the uniqueness in our
system when $b\le 0.615$. Although we will not use these  two results, we believe
that to start studying them helps to a better understanding of our approach.

\subsection{The van der Pol system}

 Consider the Van der Pol system
\begin{equation}\label{vanderpol}
\left\{\begin{array}{l}
\dot{x}=y,\\
\dot{y}=-x+(b^2-x^2)y.
\end{array}\right.
\end{equation}

Due to the expression of the above family of differential equations,
in order to apply Proposition~\ref{GenBenDul}, it is natural to
start considering functions of the form
\[
V(x,y)=f_2y^2+f_1(x)y+f_0(x).
\]
For this type of functions, the corresponding $M$ is a polynomial of degree 2 in
$y$, with coefficients being functions of $x$. In particular the coefficient of
$y^2$ is
\[
f_1'(x)+f_2(b^2-x^2).
\]
Taking  $f_1(x)=(x^2-3b^2)f_2x/3$ we get that it vanishes. Next,
fixing $f_2=6$, and imposing to the coefficient of $y$ to be  zero
we obtain that $f_0(x)=6x^2+c,$ for any constant $c.$ Finally,
taking $c=b^2(3b^2-4),$ we arrive to

\begin{equation}\label{vbvander}
V_b(x,y)=6y^2+2(x^2-3b^2)xy+6x^2+b^2(3b^2-4).
\end{equation}
From \eqref{M} of Proposition~\ref{GenBenDul}, the corresponding $M$, which only
depends on $x$, is
$$M_b(x,y)=4x^4+b^2(3b^2-4)(x^2-b^2).$$

It is easy to see that  for $b\in(0,2/\sqrt3)\approx(0,1.15)$,
$M_b(x,y)>0$. Notice that $V_b(x,y)=0$ is quadratic  in $y$ and so
is not difficult to see that it has at most one oval, see
Figure~\ref{GrafVander} for $b=1.$ Then we can apply
Proposition~\ref{GenBenDul} to prove the uniqueness and
hyperbolicity of the limit cycle for these values of $b$.

\begin{figure}[h]
\centering\epsfig{file=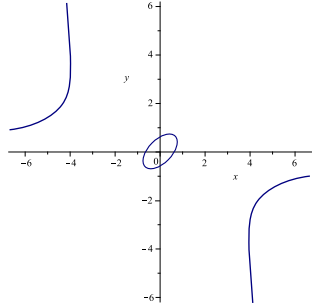,width=4cm,height=4cm}
\caption{The algebraic curve $V_b(x,y)=0$ with $b=1$.}
\label{GrafVander}
\end{figure}

We remark that taking a more suitable polynomial Dulac function, it
is possible to prove the uniqueness of the limit cycle for all
values of $b$, see \cite[p. 105]{chi}. We have only included this
explanation as a first step towards the construction of a suitable
rational Dulac function for our system~\eqref{sisb}.

\subsection{System~\eqref{sisb} with $b\le 651/1000$}\label{651}

 By making some
modifications to the function $V_b$ given by \eqref{vbvander}, we
get an appropriate function for system \eqref{sisb}. Consider

\begin{align*}
V_b(x,y)=&[2x^3+6b^2(1-b^2)x]  y^3+6(1-b^2)y^2+2(x^2-3b^2)xy\\&+
6(1-b^2)x^2+b^2(3b^2-4).
\end{align*}
Computing the double discriminant $\triangle^2(V_b)$ of the function
$V_b$, introduced in Appendix II, we get that
 \[\triangle^2(V_b)=b^2(3b^2-4)(b^2-1)^{15}(P_{19}(b^2))^2,\]
where $P_{19}$ is a polynomial of degree
 19. By using for instance the Sturm method,
 we prove that the  smallest  positive root of $\triangle^2(V_b)$ is greater than $0.85$.
 Therefore by Proposition~\ref{ddisc} we know that for
 $b\in(0,0.85]$ the algebraic curve $V_b(x,y)=0$ has no singular points and
therefore the set $\{V_b(x,y)=0\}\subset\R^2$ is a finite disjoint union of ovals
and smooth curves diffeomorphic to open intervals.

 By applying Proposition~\ref{GenBenDul} to system \eqref{sisb} with $V=V_b$,
we get that
\begin{equation}\label{Mb651}
\begin{array}{lll}
M_b(x,y)&=&6[(2-3b^2)x^4y^2-2b^2(2-b^2)x^3y^3+(2-b^2)x^2y^4]
+2(2-3b^2)x^4\\
&&-3b^2(14-15b^2)x^2y^2+12b^4(2-b^2)xy^3-b^2(4-9b^2)x^2\\
&&+3b^4(2-3b^2)y^2+b^4(4-3b^2).
\end{array}
\end{equation}

In Subsection~\ref{cas2} of Appendix II we prove that $M_b$ does not
vanish on $\R^2$ for $b\in(0,0.651]$. Then by Remark~\ref{rrr} all
the ovals of $\{V_b(x,y)=0\}$ must surround the origin, which is the
unique critical point of the system. Since the straight line $x=0$
has at most two points on the algebraic curve $V_b(x,y)=0$, it can
have at most one closed oval surrounding the origin. Then by
 Proposition~\ref{GenBenDul} it follows the uniqueness, stability and
hyperbolicity of the limit cycle of system~\eqref{sisb} for these values of the
parameter~$b$.

\subsection{System~\eqref{sisb} with $b\le 817/1000$}

The hyperbola $xy+1=0$ will play an important role in the  study of this case. We
first  prove a preliminary result.

\begin{lemma}\label{b=1} Consider  system~\eqref{sisb}.
\begin{enumerate}[(I)]
\item For $b\ne1$ the hyperbola $xy+1=0$ is without contact for its flow.
In particular its periodic orbits never cut it.
\item For $b=1$ the hyperbola  $xy+1=0$ is invariant for its flow
and the system  has not periodic orbits.
\end{enumerate}

\end{lemma}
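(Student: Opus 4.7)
The plan is to prove (I) by evaluating $\dot H$ directly on $\{H=0\}$, and to prove (II) by first exhibiting invariance through an algebraic factorization and then combining the cofactor relation with the divergence to rule out every periodic orbit. For (I), set $H(x,y)=xy+1$, so
\[
\dot H = y\dot x + x\dot y = y^2 - x^2 + x(b^2-x^2)(y+y^3).
\]
Substituting $y=-1/x$ into the right-hand side (restriction to $\{H=0\}$) and simplifying yields
\[
\dot H\big|_{\{H=0\}} = \frac{(1-b^2)(1+x^2)}{x^2},
\]
which has constant nonzero sign along each branch of the hyperbola whenever $b\neq 1$. Hence the hyperbola is a curve without contact, and no periodic orbit can cross it.

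For (II), setting $b=1$ and expanding I verify the polynomial identity
\[
\dot H = (xy+1)\,R(x,y),\qquad R(x,y):=(1-x^2)y^2 + xy - x^2,
\]
so the hyperbola is invariant. To rule out periodic orbits, let $\gamma$ be one. The two components of $\R^2\setminus\{xy+1=0\}$ disjoint from the origin contain no finite critical points, so by Poincar\'e-Bendixson $\gamma\subset\Omega_1:=\{xy+1>0\}$ and $\gamma$ encloses the origin. Invariance gives $\oint_\gamma R\,dt = \oint d\log H = 0$, and $\oint_\gamma xy\,dt = \oint \tfrac{d}{dt}(x^2/2)\,dt = 0$, so $\oint(1-x^2)y^2\,dt = \oint x^2\,dt$. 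Substituting into $\Div(X)=(1-x^2)(1+3y^2)$ gives
\[
\oint_\gamma \Div(X)\,dt = \oint(1-x^2)\,dt + 3\oint(1-x^2)y^2\,dt = T + 2\oint x^2\,dt > 0,
\]
where $T$ is the period. Hence every periodic orbit is hyperbolic with Floquet multiplier satisfying $|\lambda|>1$, which rules out period annuli and forces every periodic orbit to be an isolated, unstable limit cycle.

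I close the argument by a topological contradiction: take an innermost such periodic orbit $\gamma_0$ (existence follows because hyperbolic cycles cannot accumulate without producing a period annulus, already excluded). Then $\gamma_0$ encloses the origin, which is a repellor for $b=1$ because its linearization has eigenvalues $(1\pm i\sqrt{3})/2$, and the compact disc bounded by $\gamma_0$ contains no other periodic orbit and no critical point besides the origin. On a transversal segment joining a point near the origin to a point on $\gamma_0$, the Poincar\'e first return map pushes points outward near the origin (spiral source) and pushes interior points away from $\gamma_0$ (since $|\lambda|>1$ repels on both sides), so by the intermediate value theorem it has an interior fixed point, producing a periodic orbit strictly inside $\gamma_0$ and contradicting minimality. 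The algebraic parts (part (I), the factorization of $\dot H$, and the divergence integral) are routine; the main obstacle I foresee is making this last topological/return-map step fully rigorous, in particular checking that the return map is defined on the entire transversal and that the innermost cycle indeed exists.
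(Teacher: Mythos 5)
Your proposal is correct and takes essentially the same route as the paper: part (I) is the identical computation, and in part (II) the paper uses the same cofactor identity and the same two integral identities ($\oint K\,dt=\oint \tfrac{d}{dt}\ln|H|\,dt=0$ and $\oint xy\,dt=\oint x\dot{x}\,dt=0$) to conclude $\oint \Div(X)\,dt=\oint (1+2x^2)\,dt>0$ on every periodic orbit. The obstacle you flag in your closing step can be avoided entirely (the paper simply asserts this step in one line): apply Poincar\'e--Bendixson to the forward orbit of a point just inside an \emph{arbitrary} periodic orbit $\gamma$ --- its $\omega$-limit set must be a periodic orbit inside the disc bounded by $\gamma$, since the repelling origin cannot belong to any $\omega$-limit set and there are no other critical points to form a graphic, and such a limit orbit is attracting from at least one side, contradicting the positivity of the divergence integral; this dispenses with both the innermost cycle and the globally defined return map.
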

\begin{proof}
Define $F(x,y)=xy+1$ and set $X=(P,Q):=(y,-x+(b^2-x^2)(y+y^3))$. Simple
computations give that for $x\ne0$,
\[
\left.(F_xP+F_yQ)\right|_{y=-1/x}=\frac{1+x^2}{x^2}\left(1-b^2\right). \]
Therefore (I)  follows and we have also proved that when $b=1$, the hyperbola is
invariant by the flow.

(II) When $b=1$,
\begin{equation}\label{invc1}
F_xP+F_yQ=KF,
\end{equation}
where $K=K(x,y)=y^2-x^2-xy(xy-1)$ is the so called {\it cofactor} of
the invariant curve $F=0$.

Let us prove that the system has no limit cycle. Recall that the origin is
repeller. Therefore if we prove that any periodic orbit $\Gamma$ of the system is
also repeller we will have proved that there is no limit cycle.

This will follow if we show that
\begin{equation}\label{dd}
\int_0^{T} \operatorname{div}(X)(\gamma(t))\,dt>0,
\end{equation}
where  $\gamma(t):=(x(t),y(t))$ is the time parametrization of $\Gamma$ and
$T=T(\Gamma)$ its period.

To prove~\eqref{dd} notice that the divergence of  $X$ can be written as
$\operatorname{div}(X)=3K+2x^2+1-3xy$. Then,
$$
\int_0^{T} \operatorname{div}(X)(\gamma(t))\,dt= 3\int_0^{T}K(x(t),y(t))dt+
\int_0^{T} (2x(t)^2+1)dt-3 \int_0^{T}x(t)y(t)dt.
$$
Observe that from \eqref{invc1} we have that
$$
\begin{array}{lll}
\displaystyle \int_0^{T}K(x(t),y(t))\,dt
&\displaystyle=&\displaystyle\int_0^{T}\frac{F_x(x(t),y(t))\dot
x+F_y(x(t),y(t))\dot y}{F(x(t),y(t))}dt\\
&\displaystyle=&\displaystyle\int_0^{T}\frac{d}{dt}\ln |F(x(t),y(t))|
dt=\ln|F(x(t),y(t))|\Big |_0^{T}=0
\end{array}
$$
and that
$$
\int_0^{T}x(t)y(t)dt =\int_0^{T}x(t)\dot{x}(t)dt=\frac{x^2(t)}{2}\Big|_0^{T}=0.
$$
Therefore
$$
\int_0^{T} \operatorname{div}(X)(\gamma(t))\,dt= \int_0^{T} (2x(t)^2+1)dt>0,
$$
as we wanted to see.
\end{proof}

\begin{theorem} System~\eqref{sisb} for $b\in(0,0.817]$ has at most one limit
cycle. Moreover when it exists it is hyperbolic and attractor.
\end{theorem}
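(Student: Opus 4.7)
The plan is to apply Proposition~\ref{GenBenDul} with a carefully chosen rational Dulac candidate $V$ built from the curve $xy+1=0$ and a polynomial of the type used in Subsection~\ref{651}. First, since $b\le 0.817<1$, Lemma~\ref{b=1} guarantees that the hyperbola $xy+1=0$ is a curve without contact for the flow, so any periodic orbit must lie entirely in the connected component $\mathcal{U}$ of $\R^2\setminus\{xy+1=0\}$ that contains the origin (the only finite critical point). Hence it is enough to establish that at most one limit cycle, hyperbolic and attractor, can live inside $\mathcal{U}$.

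Next, I would look for a Dulac candidate of the form
$$
V(x,y)=\frac{W_b(x,y)}{(xy+1)^n},
$$
with $n$ a small positive integer and $W_b$ a polynomial that is a perturbation/enlargement of the $V_b$ used in Subsection~\ref{651} (for $b\le 0.651$). The degrees of $W_b$ in $x,y$ and the value of $n$ would be increased gradually until a combination works. The construction of $W_b$ follows the same ansatz-and-cancellation strategy illustrated for van der Pol: write $W_b$ with undetermined $b$-dependent coefficients, compute the associated $M$ from \eqref{M}, and fix as many coefficients as possible by killing the higher-order terms of $M$ viewed as a polynomial in $y$ (then in $x$). The remaining free parameters are tuned so as to push the range of validity for $b$ as far as possible, and the target $b\le 0.817$ suggests that this optimisation is close to the sharp value attainable by the present ansatz.

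The main obstacle will be verifying hypothesis (I) of Proposition~\ref{GenBenDul}, i.e.\ that the corresponding $M$ has constant sign on $\mathcal{U}$ for every $b\in(0,0.817]$. Following the pattern of Subsection~\ref{cas2} of Appendix~II, the problem reduces to showing that a certain double discriminant $\triangle^2(M)$, a polynomial in $b^2$ with rational coefficients, has no zeros in the interval $(0, (817/1000)^2]$, together with a sign check at a single sample point (say the origin or a point on the $y$-axis) for each connected piece to transfer the local information to a global sign. In the present setting this discriminant is expected to be enormous; indeed the introduction advertises the degree-$965$ polynomial that arises here. Direct application of Sturm's algorithm is impractical at that size, and the remedy is the Descartes-rule bisection procedure described in Appendix~I.

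Finally, I would verify hypothesis (II): the algebraic set $\{V=0\}=\{W_b=0\}$, whose smoothness is in turn established via an auxiliary (single) discriminant computation as in Subsection~\ref{651}, must split $\mathcal{U}$ into connected components exactly one of which is $1$-connected---necessarily the piece around the origin. Once this is confirmed, Proposition~\ref{GenBenDul} delivers at most one limit cycle in $\mathcal{U}$, hyperbolic, with stability dictated by the sign of $-VM$ on the distinguished component $\widetilde{\mathcal{U}}$; a numerical sign check at one point (for example near the origin) identifies this sign as the one corresponding to an attractor, completing the proof.
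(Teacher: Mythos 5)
Your outline reproduces the paper's strategy faithfully: restrict to $\Omega=\{xy+1>0\}$ via Lemma~\ref{b=1}, apply Proposition~\ref{GenBenDul} with a rational Dulac candidate grown out of the Subsection~\ref{651} polynomial, and control the sign of the resulting $M$ with the double-discriminant machinery of Appendix~II plus the Descartes bisection of Appendix~I. But as a proof it has a genuine gap: you never exhibit the Dulac function. The entire content of this theorem lies in the explicit construction and the subsequent verification; ``increase the degrees and tune the free coefficients until a combination works'' is a research plan, not an argument, and nothing guarantees that your particular ansatz $V=W_b(x,y)/(xy+1)^n$ admits a working choice of $W_b$ and $n$ at all. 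The paper's working candidate is different: $V_b=\widehat V_b(x,y)/(5+6b^{18}x^2)$, with a denominator that is \emph{nowhere vanishing} on $\R^2$ (chosen purely as a tuning device to push the plain polynomial bound $b\le 0.811$ up to $0.817$), while the hyperbola enters only through the definition of the region $\Omega$, not through $V$.

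Even granting an ansatz that works, your verification sketch misstates what has to be checked. For the paper's function the double discriminant $\triangle^2_{y,x}(N_b)$ does \emph{not} avoid zeros on $(0,(817/1000)^2]$: the degree-$965$ factor $\mathcal{P}_3$ has roots $t_1\in(0.5614,0.5615)$ and $t_2\in(0.5617,0.5618)$, so the parameter interval must be split as $(0,b_1)\cup\{b_1\}\cup(b_1,b_2)\cup\{b_2\}\cup(b_2,0.8171)$ with $b_i=\sqrt{t_i}$, positivity proved separately on each open piece, and a continuity argument giving only $N_b\ge 0$ at $b_1,b_2$ (whence the ``vanishes at isolated points'' caveat in the statement of the sign result). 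Moreover, the anchor step on each piece is not ``a sign check at a single sample point'': one must prove that the two-variable polynomial $N_{b_0}$ is positive on all of $\Omega$ for a sample parameter $b_0$ in that piece, which the paper does via Lemma~\ref{ll} (one-variable discriminants of degree $35$ in $y^2$, Sturm sequences, and the domain $\Omega_y=(-1/y,\infty)$). Finally, Proposition~\ref{lem-inf} has two further hypotheses you never address: that $\{N_b=0\}\cap\{xy+1=0\}=\emptyset$ for all $b$ in the range, and that the points of $N_b=0$ at infinity lying over $\Omega$ are uniformly isolated (checked in Subsection~\ref{cas3} via Proposition~\ref{nnnou} and a discriminant whose smallest positive root exceeds $0.92$). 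These are not formalities; without them the continuation-in-$b$ argument that underlies the whole sign-control method breaks down, since zeros of $N_b$ could enter $\Omega$ through the boundary or from infinity without ever being seen by the double discriminant.
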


\begin{proof} Based on the function $V_b$ used in the Subsection~\ref{651}
 we consider the function $V_b(x,y)=\widehat V_b(x,y)/( 5+6b^{18}x^2)$, where
\begin{equation}
\begin{array}{lll}
\widehat
V_b(x,y)&=&\frac{1}{2}\,{b}^{18}{x}^{6}+\frac{1}{2}\,{b}^{18}{x}^{4}{y}^{2}+
\left( 1+\frac{1}{2}\,{b}^{ 12} \right) {x}^{3}{y}^{3}+ \left(
1+\frac{3}{2}\,{b}^{2} \right) {x}^{3}y\\&&-
 \left(\frac{3}{5}\,{b}^{10}+\frac{5}{3}\,{b}^{14}+2\,{b}^{16} \right){x}^{2}{y}^{2
}+ \left( 3\,{b}^{2}-3\,{b}^{4}+{\frac {21}{10}}\,{b}^{6} \right) x{y} ^{3}\\&&+
\left( 3-3\,{b}^{2}+2\,{b}^{4} \right) {x}^{2}-{b}^{
2}\left(3-\frac{1}{10}\,{b}^{4} \right) xy+ \left( 3-3\,{b}^{2}+2\,{b}^{4} \right)
{y }^{2}\\&&+\frac{3}{2}\,{b}^{4}-2\,{b}^{2}.
\end{array}
\end{equation}
We have added the non-vanishing denominator to increase a little bit
the range of values for which Proposition~\ref{GenBenDul} works.
Indeed, it can be seen that the above function, but without the
denominator, is good for showing that the system has at most one
limit cycle for $b\le 0.811.$

To study the algebraic curve $\widehat V_b(x,y)=0$ we proceed like in the previous
subsection. The double discriminant introduced in Appendix II is
 \[\triangle^2(\widehat V_b)=b^{182}(3b^2-4)(4b^{36}+27b^{24}+108 b^{12}+108)
 (P_{152}(b^2))^2,\]
where $P_{152}$ is a polynomial of degree
 152. It can be seen that
 the  smallest  positive root of $\triangle^2(\widehat V_b)$ is greater than $0.88$.
 Therefore by Proposition~\ref{ddisc} we know that for
$b\in(0,0.88]$ this algebraic curve  has no singular points. Hence the set
$\{V_b(x,y)=0\}\subset\R^2$ is a finite disjoint union of ovals and smooth curves
diffeomorphic to open intervals.

The function  that we have to study in order to apply Proposition~\ref{GenBenDul}
is
\begin{equation}\label{ap1}
M_b(x,y)=\frac{N_b(x,y)}{30(6b^{18}x^2+5)^2}
\end{equation}
where $N_b(x,y)$ is given in \eqref{Mb817} of Subsection~\ref{cas3}.
The denominator of $M_b$ is positive for all $(x,y)\in\R^2$. By
Lemma~\ref{b=1} we know that the limit cycles of the system must lay
in the open region $\Omega=\R^2\cap\{xy+1> 0\}$. In
Subsection~\ref{cas3} of Appendix II we will prove that $N_b$ does
not change  sign on the region $\Omega$ and if it vanishes it is
only at some isolated points.

Notice also that the set $\{\widehat V_b(x,y)=0\}$ cuts the $y$-axis at most in
two points, therefore by the previous results and  arguing as in
Subsection~\ref{651}, we know that it has at most one oval and that when it exists
it must surround the origin.

Therefore we are under the hypotheses of Proposition~\ref{GenBenDul}, taking
$\mathcal{U}=\Omega,$  and the uniqueness and hyperbolicity of the limit cycle
follows.
\end{proof}

\section{Phase portraits for $b\le79/100$ and $b=817/1000$}\label{pp}

This section is devoted to find the relative position of the
separatrices of the infinite critical points when $b\le0.79$ and
when $b=0.817$. The main tool will be the construction of algebraic
curves that are without contact for the flow of system \eqref{sisb}.
These curves are essentially obtained by using  the functions
$\phi_i(x):=\tilde{\phi}_i(x-b)/(x-b)^2$ and
$\varphi_i(x):=\tilde{\varphi}_i(1/x)$ where $\tilde{\phi}_i$ and
$\tilde{\varphi}_i$ are the approximations of order $i$ of the
separatrices of the infinite critical points, given in the
expressions~\eqref{sepS2S3} and~\eqref{sepS1} of Theorem \ref{infi},
respectively. That is, we
 use algebraic approximations of $\mathcal{S}_i$ and  $\mathcal{S}'_i,$ for
 $i=1,2,3.$

As usual for knowing when a vector field $X$ is without contact with
a curve of the form $y=\psi(x)$ we have to control the sign of
$$
N_{\psi}(x):=\langle\nabla(y-\psi(x)),X\rangle\big{|} _{y=\psi(x)}.
$$
In this section we will repeatedly  compute this function when $\psi(x)$ is either
$\varphi_i(x)$, $\phi_i(x)$ or modifications of these functions.

We prove the following result.

\begin{proposition}\label{ppp} Consider system \eqref{sisb}. Then:
\begin{enumerate}[(I)]
\item For $b\le{79}/{100}$ the configuration of its
separatrices is like (i) in Figure~\ref{posRela}. Moreover it has an odd number of
limit cycles, taking into account their multiplicities.

\item For $b={817}/{1000}$ the configuration of its
separatrices is like (v) in Figure~\ref{posRela}. Moreover it has an even number
of limit cycles, taking into account their multiplicities.
\end{enumerate}
\end{proposition}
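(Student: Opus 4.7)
The plan is to combine the analytic asymptotic description of the infinite separatrices (Theorem~\ref{infi}) with a Poincar\'e--Bendixson/index argument on the Poincar\'e disc. For each separatrix $\mathcal{S}_j$ we have at our disposal a truncation of its local analytic expansion, either $\phi_N(x)$ or $\varphi_N(x)$, which can be computed to any prescribed order; a suitable rational modification of such a truncation will serve as a \emph{barrier curve} $\Gamma_j=\{y=\psi_j(x)\}$ on an appropriate (possibly unbounded) interval of the $x$-axis. Transversality of $\Gamma_j$ to the flow is equivalent to sign definiteness of
\[
N_{\psi_j}(x)=\langle\nabla(y-\psi_j(x)),X_b\rangle\big|_{y=\psi_j(x)}
\]
on the interval in question; since we specialise $b$ to the rational values $79/100$ and $817/1000$, this is a sign problem for a univariate polynomial in $x$.

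Once every $\Gamma_j$ has been certified to be without contact, the actual separatrix $\mathcal{S}_j$ is pinned to a specific side of $\Gamma_j$ — the side is forced by the local asymptotic expansion, which agrees with $\psi_j$ to high order at the infinite critical point from which $\mathcal{S}_j$ emanates. Assembling the six barriers, and exploiting the symmetry $(x,y)\mapsto(-x,-y)$ of \eqref{sisb} so that only the separatrices $\mathcal{S}_1,\mathcal{S}_2,\mathcal{S}_3$ need to be controlled explicitly, we will be able to compare positions at a convenient common transversal and thus determine the relative position of $\mathcal{S}_1$ with respect to $\mathcal{S}'_1$ and of $\mathcal{S}_2$ with respect to $\mathcal{S}'_3$. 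The inequalities obtained for $b=79/100$ will select configuration~(i) of Figure~\ref{posRela}, while those for $b=817/1000$ will select configuration~(v). For part~(I) the conclusion then extends to all of $(0,79/100]$ by the SCFRVF property established in the proof of Proposition~\ref{fr}: the intersection points of the separatrices with an auxiliary transversal are monotone functions of $b^2$, so configuration~(i) persists on the whole interval.

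The parity statements follow from a Poincar\'e--Bendixson argument. In case~(I) the six infinite separatrices together with their $\alpha$- and $\omega$-limit sets form a polycycle enclosing the origin, and the annular region between this polycycle and a small circle around the origin is positively invariant; since the origin is a repelling focus for $b>0$, the sum of the multiplicities of the limit cycles in the annulus is odd. In case~(II) the separatrices instead leave an escape channel to infinity, so no trapping annulus is available from the cycle of separatrices; a transversal crossing the region that in case~(I) was trapped then carries a well-defined Poincar\'e return map whose displacement function, by a signed-degree count at the two ends of the transversal, must have an even number of zeros counted with multiplicity, giving an even number of limit cycles.

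The main obstacle is the sign certification of $N_{\psi_j}$ for each barrier curve. Because the expansion of $\tilde{\phi}$ has to be pushed to order~$16$ (as noted in the remark following Theorem~\ref{infi}), after clearing denominators the numerator of $N_{\psi_j}(x)$ is a polynomial in $x$ of very large degree with enormous rational coefficients, and its sign has to be controlled on intervals that are unbounded on one side. We will handle this by subdividing the $x$-line into finitely many subintervals adapted to the geometry of the separatrix, applying the double discriminant method of Appendix~II to extract the relevant real root information, and — whenever classical Sturm sequences become computationally intractable — invoking the Descartes-based bisection procedure of Appendix~I to localise the roots on rational subintervals.
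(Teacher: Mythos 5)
Your plan coincides in its core with the paper's proof: barriers built from truncations of the expansions in Theorem~\ref{infi}, transversality certified through the sign of $N_{\psi}$, the symmetry $(x,y)\mapsto(-x,-y)$ reducing everything to $\mathcal{S}_1,\mathcal{S}_2,\mathcal{S}_3$, comparison of crossing points with the $x$-axis to pick out configurations (i) and (v), and parity via Poincar\'e--Bendixson with the origin a source. Where you genuinely diverge is part (I). The paper never fixes $b=79/100$: it keeps $b$ as a parameter, uses the very low order barriers $\varphi_1(x)=-1/x$ and $\phi_3(x)$ (order $3$, not $16$), and certifies transversality uniformly on $(0,79/100]$ by computing $\operatorname{dis}(N_{\phi_3}(x),x)=b^{12}P_{22}(b^2)$ and isolating the roots of $P_{22}$ with Sturm --- the smallest relevant root lies in $(0.7904,0.7905)$, which is precisely what produces the bound $79/100$. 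Your variant --- prove the configuration at the single rational value and propagate it down to $(0,79/100]$ by monotone rotation of the separatrices --- buys purely univariate computations, but at the price of invoking Duff--Perko rotation theory for separatrices of the \emph{degenerate} critical points at infinity; since the paper itself uses exactly this kind of statement (monotonicity of $x_2(b)$ and $x_3'(b)$ in the proof of Theorem~\ref{mteo}), the step is legitimate, though where the paper has an algebraic certificate you have a qualitative black box.

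Two cautions. First, your tooling is misdirected for this proposition: once $b$ is rational every sign question is univariate in $x$ and Sturm suffices, which is what the paper does throughout this proof; the double discriminant of Appendix~II and the Descartes bisection of Appendix~I are needed for the two-variable positivity problems of Section~\ref{uni} (where a factor of degree $965$ in $b^2$ appears), not here. Second, the real technical crux of part (II), which ``suitable rational modification'' only gestures at, is that the truncation $\phi_{16}$ is a usable barrier for $\mathcal{S}_2$ only on $(-3/100,b_0)$; to reach the negative $x$-axis the paper must glue in the Pad\'e approximant $\operatorname{Pd}_{[5,1]}(\phi_{16})(x,-3/100)$, the shifted curve $\hat\phi_{16}=\phi_{16}-1/(9b^3)$ and the vertical segment $x=x_2$, so as to conclude $|\hat x|<x_2<\bar x$ and hence configuration (v). Also, in configuration (v) the Poincar\'e return map is \emph{not} defined on the whole transversal (orbits beyond the outermost cycle escape to infinity), so you should phrase the parity count in terms of the displacement function where it is defined; that is all the paper's one-line Poincar\'e--Bendixson argument amounts to.
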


\begin{proof} (I)  Consider the two functions
$$\varphi_1(x) = -\frac{1}{x}, \qquad \textrm{and}
\qquad \varphi_2(x) = -\frac{1}{x}-\frac{(b^2-1)}{x^3},
$$
which are the corresponding expressions in the plane $(x,y)$ of the
first and second approximation of the separatrix $\mathcal{S}_1$.

If $b<1$ then $(\varphi_1-\varphi_2)(x)=(b^2-1)/x^3>0$ for $x<0$.
This implies that the separatrix $\mathcal{S}_1$ in the
$(x,y)$-plane and close to $-\infty$ is below the graphic of
$\varphi_1(x)$. Moreover
$$
N_{\varphi_1}(x)=-\frac{(x^2+1)(b^2-1)}{x^3}<0 \qquad \mbox{for
$x<0$}.
$$
This inequality implies that the separatrix $\mathcal{S}_1$ in the plane $(x,y)$
cannot intersect the graphic of $\varphi_1(x)$ for $x<0$, see Figure~\ref{retrs1}.
\begin{figure}[h]
\begin{center}
\epsfig{file=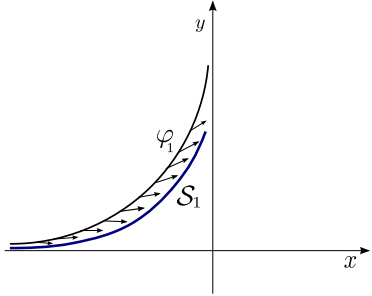,width=4.8cm} \caption{Behavior of
$\mathcal{S}_1$ for $b<1$.} \label{retrs1}
\end{center}
\end{figure}

Now, we consider the third approximation to the separatrices $\mathcal{S}_2$ and
$\mathcal{S}_3$, that is we consider the first three terms in \eqref{sepS2S3}. It
is  given by the graph of the function
$$
\phi_3(x)=\frac{(x^2-5bx+13b^2)}{9b^3(x-b)^2}.
$$

Let us prove that when $b\in(0,\sqrt{2/3})$,  the graphs of $\varphi_1(x)$ and
$\phi_3(x)$ intersect at a unique point, $(x_0,y_0)$ with $x_0<0$ and $y_0>0$. For
this is sufficient to show that the function $(\varphi_1-\phi_3)(x)$ has a unique
zero at some $x_0<0$.

It is clear that $\lim_{x\rightarrow 0^{-}}(\varphi_1-\phi_3)(x)=+\infty$ and we
have that $(\varphi_1-\phi_3)(-2b)=(3b^2-2)/6b^3$, then for $b<\sqrt{2/3}$,
$(\varphi_1-\phi_3)(-2b)<0$ hence $(\varphi_1-\phi_3)(x)$ has a zero at a point
$x_0$ with $-2b<x_0<0$. Moreover this zero is unique because the numerator of
$(\varphi_1-\phi_3)(x)$ is a monotonous function.

It also holds that   $\nabla (y-\phi_3(x))=(-\phi_3'(x),1)$ where
$\phi_3'(x)=(-x+7b)/3b^2(-x+b)^3$ is a positive function for $x<0$, and a simply
computation shows that
\begin{eqnarray*}
\begin{array}{lll}
N_{\phi_3}(x)&=&
\frac{1}{-729b^9(b-x)^2}\left[(81b^6+1)x^4+(729b^8-405b^6-11)bx^3\right.
\\&&-9(162b^8-108b^6-7)b^2x^2+(729b^8+405b^6-178)b^3x
\\&&\left.-13(81b^6-20)b^4\right].
\end{array}
\end{eqnarray*}
To control the sign of $N_{\phi_3}$ we compute the discriminant of its numerator
with respect to $x$. It gives $\operatorname{dis}(N_{\phi_3}(x),x)=
b^{12}P_{22}(b^2)$, where $P_{22}$ is a polynomial of degree 22 with integer
coefficients.

By  using the Sturm  method  we obtain that $P_{22}(b^2)$  has exactly four real
zeros. By Bolzano theorem the positive ones   belong to the intervals
$(0.7904,0.7905)$ and $(2.6,2.7)$.

\begin{figure}[h]
\begin{center}

\begin{tabular}{ccc}
\epsfig{file=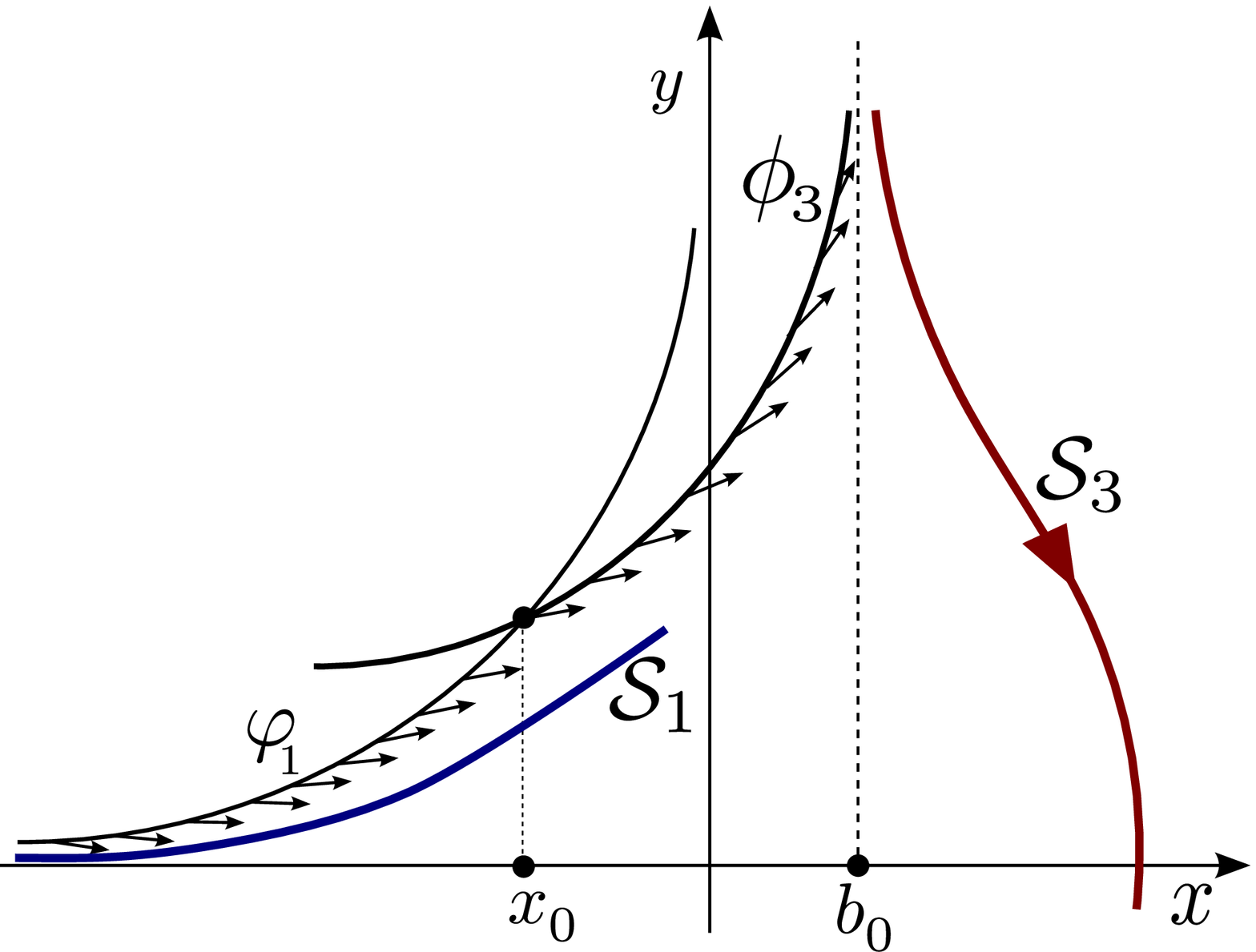,width=5cm}& &
\epsfig{file=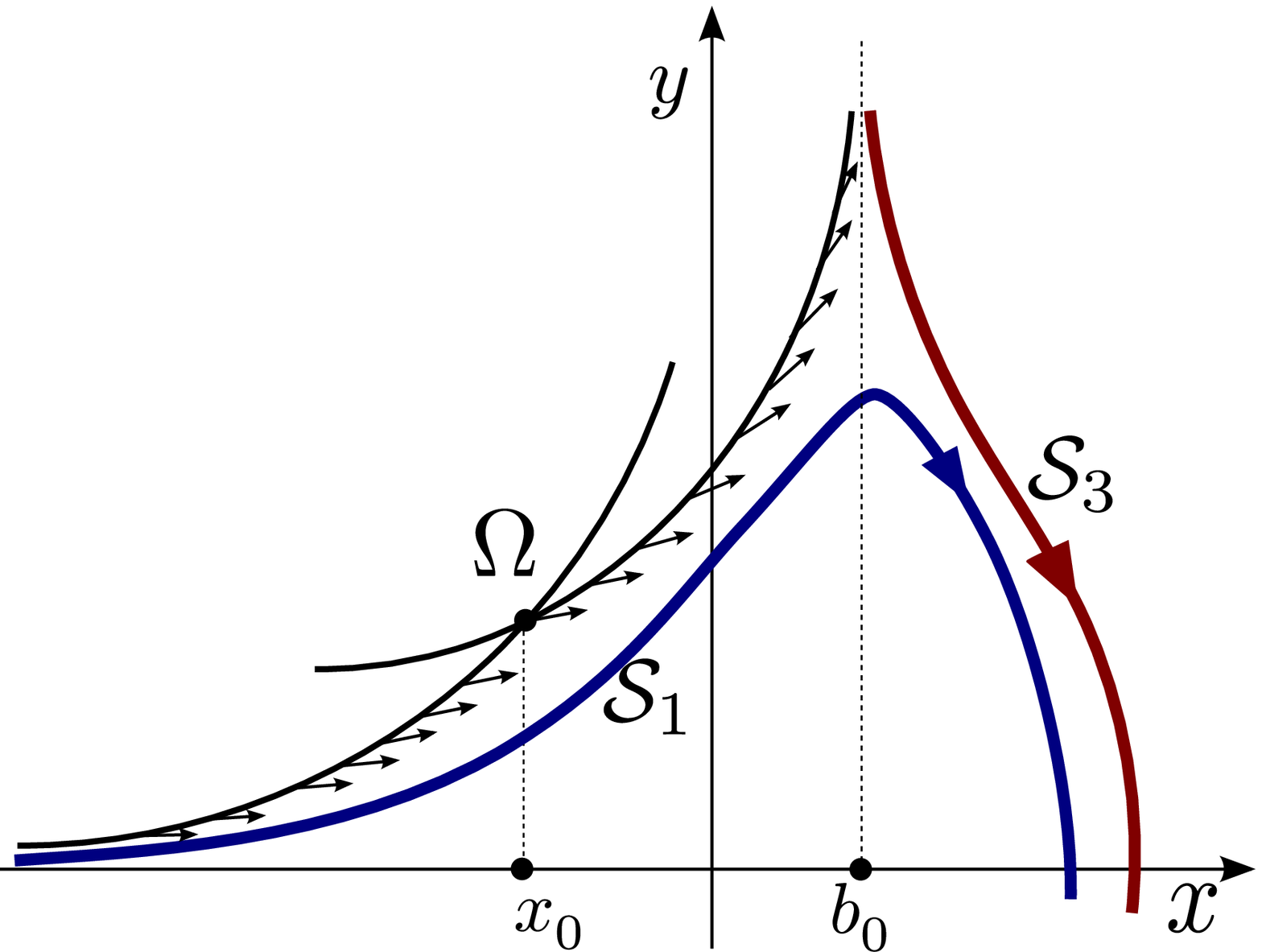,width=5cm}\\
(a) & & (b)
\end{tabular}
\caption{Behavior of $\mathcal{S}_1$ and $\mathcal{S}_3$ for
$b\le0.79$ } \label{retrs11}
\end{center}
\end{figure}

If we fix  $b_0\le79/100$ then  $b_0<\sqrt{2/3}$ and moreover according to
previous paragraph the graphics of $\varphi_1(x)$ and $\phi_3(x)$ intersect at a
unique point $(x_0,y_0)$ with $x_0<0$ and $y_0>0$. Furthermore, $\frac{\partial
N_{\phi_3}}{\partial b}(b_0)>0$ in $(x_0,b_0)$ and $N_{\phi_3}<0$ in $(x_0,b)$ for
all $b \in (0,b_0]$. Therefore the vector field associated to \eqref{sisb} on
these curves is the one showed in Figure~\ref{retrs11}.(a).

From Figure~\ref{retrs11}.(a) it is clear that the separatrix $\mathcal{S}_1$
cannot intersect the set $\Omega=\{(x,\varphi_1(x))|-\infty<x\leq x_0\}\cup
\{(x,\phi_3(x))|x_0\leq x<b_0\}$. Moreover, since the separatrix $\mathcal{S}_2$
forms an hyperbolic sector together with~$\mathcal{S}_3$ we obtain that
$\mathcal{S}_1$ cannot be asymptotic to the line $x=b_0$. Hence we must have the
situation showed in Figure~\ref{retrs11}.(b). We know that the origin is a source
and from the symmetry of system \eqref{sisb} we conclude that for $b\leq 0.79$ the
system has an odd number of limit cycles (taking into account multiplicities) and
the phase portrait  is the one showed  in Figure~\ref{retrsf}.
\begin{figure}[h]
\begin{center}
\epsfig{file=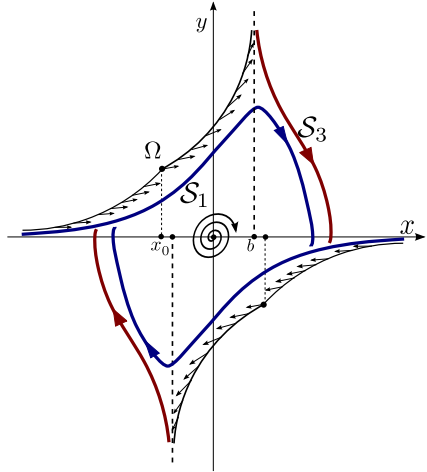,width=5.5cm} \caption{For $0<b\leq 0.79$,
system \eqref{sisb} has at least one  limit cycle and phase
portrait~(i) of Figure~\ref{Sphere} .} \label{retrsf}
\end{center}
\end{figure}

\smallskip

(II) We start proving the result  when $b=b_0:=89/100$ because the
method that we use is the same that for studying the case
$b=817/1000$, but the computations are easier. Recall that we want
to prove that the configuration of separatrices is like  (v) in
Figure~\ref{posRela}. That the number of limit cycles must be even
(taking into account multiplicities) is then a simply consequence of
the Poincar\'{e}--Bendixson Theorem, because the origin is a source.

We  consider the approximation of eight order to $\mathcal{S}_2$ and
$\mathcal{S}_3$ given by the graph of the function $\phi_{8}(x)$.

By using again the Sturm method it is easy to see that $N_{\phi_{8}}(x)<0$ for
$x\in(b_0,x_0)$, where $x_0=1.924$ is a left approximation to the root of the
function $\phi_{8}(x)$, and $N_{\phi_{8}}(x)>0$ for $x\in(x_1,b_0)$, where
$x_1=-2.022$ is  a right approximation to the root of the function
$N_{\phi_{8}}(x)$. That is, we have the situation shown in Figure~\ref{bar1}.(a).
Now, we consider the function $\hat{\phi}_{8}(x)=\phi_{8}(x)-1/(9b^3)$, is clear
that $(\phi_{8}-\hat{\phi}_{8})(x)>0$. We have $N_{\hat{\phi}_{8}}(x)>0$ for
$x\in(b_0,x_2)$ where $x_2=1.6467$ is a left approximation to the root of the
function $\hat{\phi}_{8}(x)$, moreover the line $x=x_2$ is transversal to the
vector field for $y>0$, thus the separatrix $\mathcal{S}_3$ intersects the
$x$-axis at a point $\bar{x}$ of the interval $(x_2,x_0)$, see again
Figure~\ref{bar1}.(a).
\begin{figure}[h]
\begin{center}
\begin{tabular}{cc}
\epsfig{file=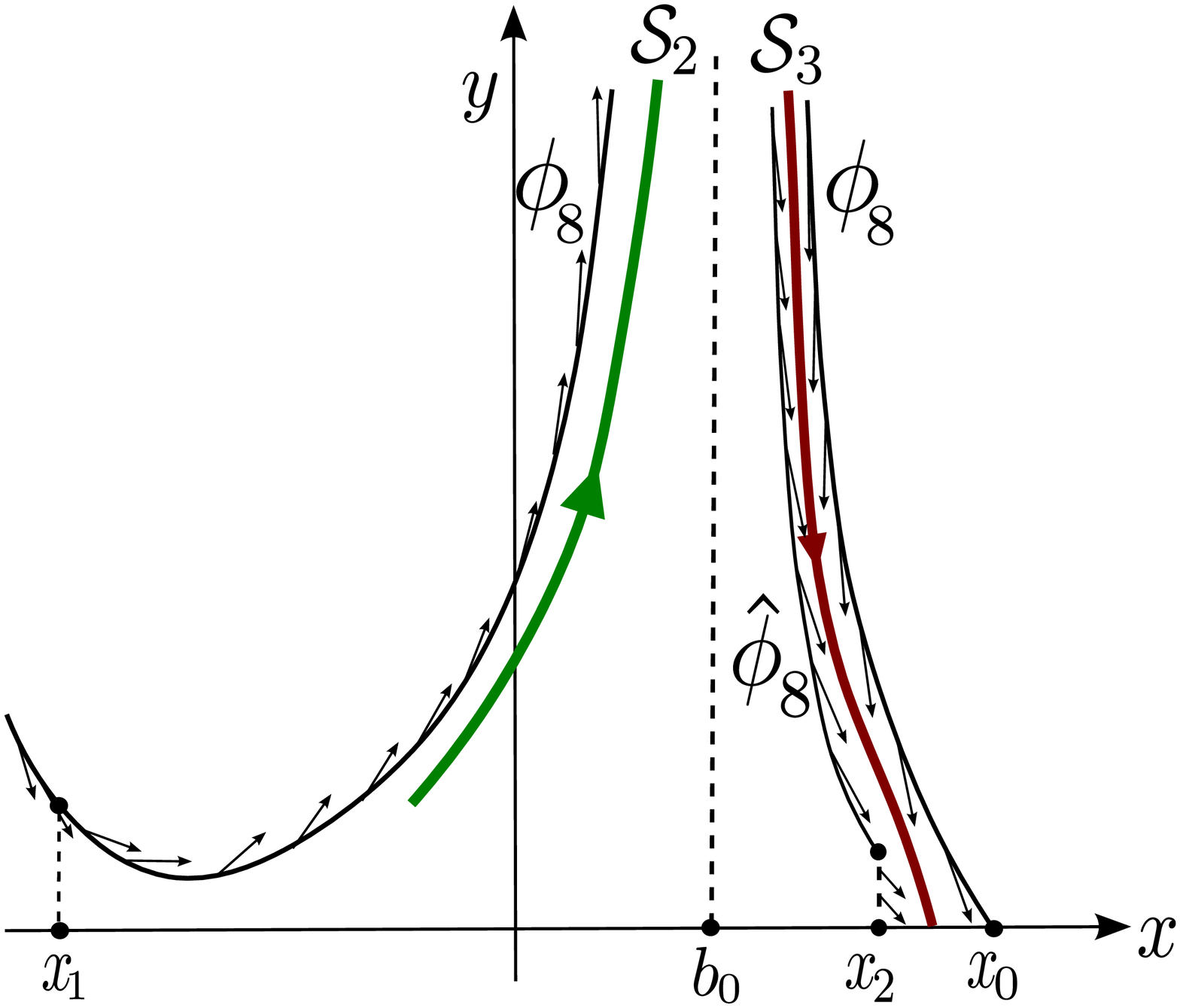,width=6cm}&
\epsfig{file=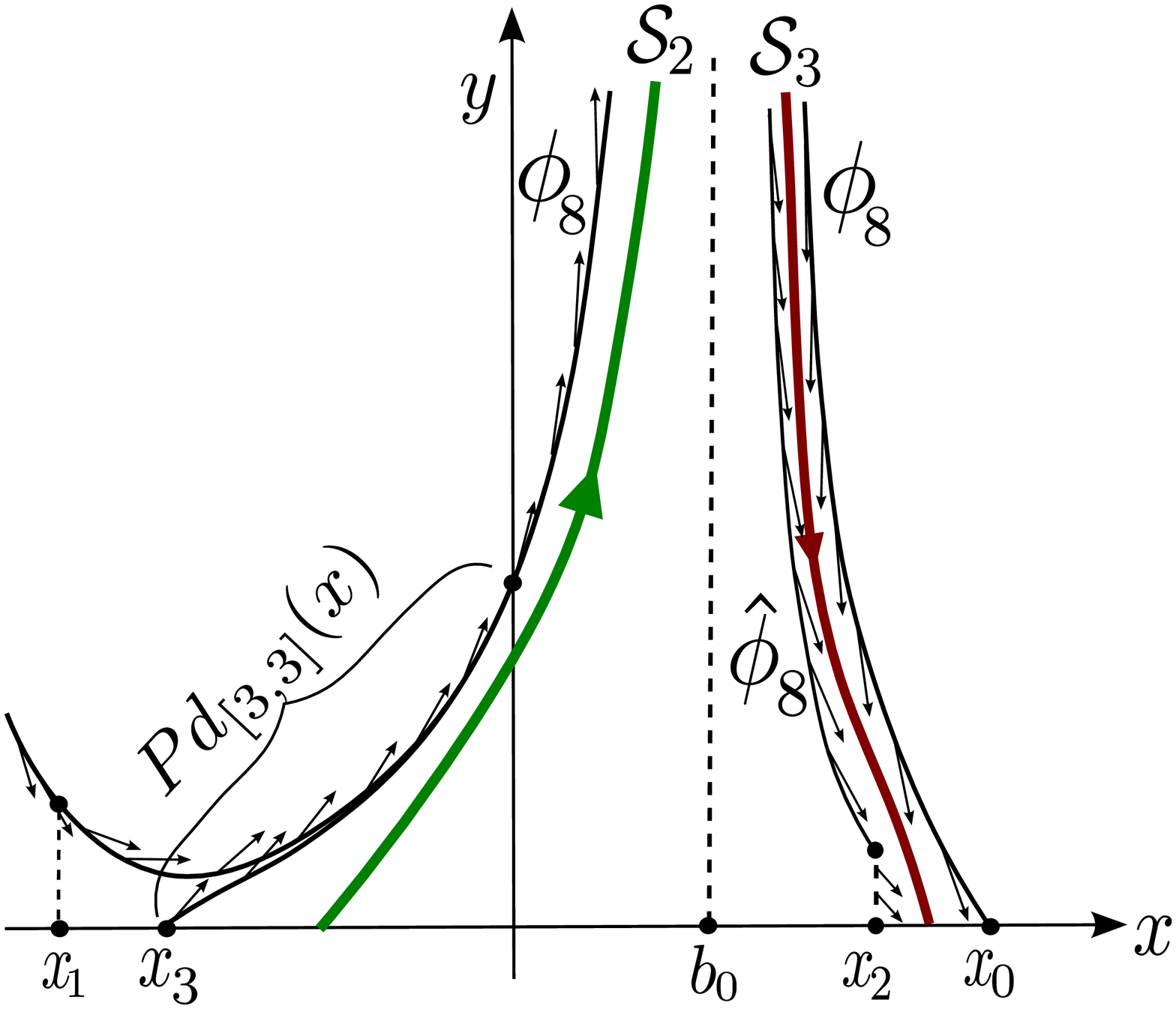,width=6cm}\\
(a) &  (b)
\end{tabular}
\end{center}\caption{Behaviour of $\mathcal{S}_2$ and
$\mathcal{S}_3$ for $b\in\{0.817,0.89\}$.} \label{bar1}
\end{figure}

At this point, the idea is to show that $\mathcal{S}_2$ intersects
the $x$-axis at a point $\hat{x}$, with $-x_2<\hat{x}<0$. For
proving this, we  utilize the Pad\'e approximants method,
see~\cite{pade}.

 Recall that given a function $f(x)$,
its Pad\'e approximant $\operatorname{Pd}_{[n,m]}(f)(x,x_0)$ of order $(n,m)$ at a
point $x_0$, or simply $\operatorname{Pd}_{[n,m]}(f)(x)$ when $x_0=0$, is a
rational function of the form $F_n(x)/G_m(x)$, where $F_n$ and $G_m$ are
polynomials of degrees $n$ and $m$, respectively, and such that
$$
\left|f(x)-\frac{F_n(x)}{G_m(x)}\right|=O\left((x-x_0)^{n+m+1}\right).
$$
Consider the Pad\'e approximant $\operatorname{Pd}_{[3,3]}(\phi_8).$
It satisfies that $\operatorname{Pd}_{[3,3]}(\phi_8)(0)=\phi_8(0)$
 and by the Sturm method it can be
seen that there exists $x_3<0$ such that
$\operatorname{Pd}_{[3,3]}(\phi_8)(x_3)=0$,
 $\operatorname{Pd}_{[3,3]}(\phi_8)$ is positive and increasing
on the interval $(x_3,0)$ and a left approximation to $x_3$ is $-1.595$. Moreover
it is easy to see that $N_{\operatorname{Pd}_{[3,3]}(\phi_8)}(x)>0$ for
$x\in(x_3,0)$. Therefore $\mathcal{S}_2$ cannot intersect neither the graph of
$y=\operatorname{Pd}_{[3,3]}(\phi_8)(x)$  in $(x_3,0)$ nor the  graph of
$\phi_8(x)$ in $[0,b_0).$ Hence $\mathcal{S}_2$ intersects  the $x$-axis in a
point $\hat{x}$ contained in the interval $(x_3,0)$. This implies that
$-x_2<\hat{x}<0$  as we wanted to see, because $-x_2<x_3.$ Hence the behavior of
the separatrices is like Figure~\ref{bar1}.(b). See also Figure~\ref{BarS3Com}.

\begin{figure}[h]
\begin{center}
\epsfig{file=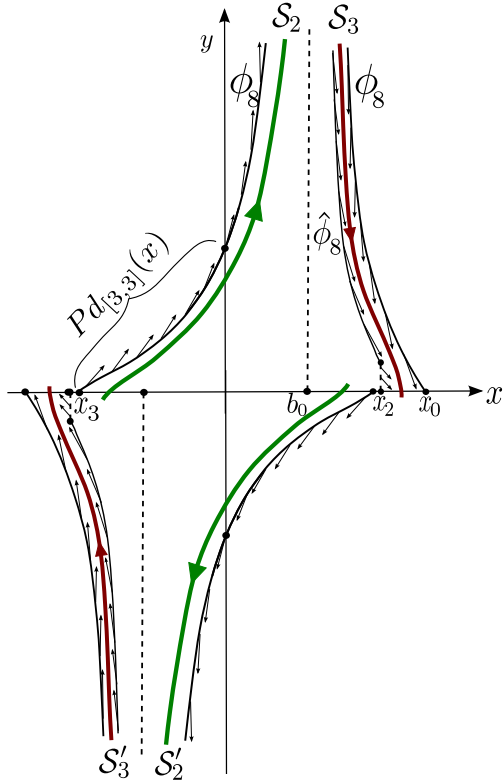,width=6.5cm}
\end{center}\caption{Behavior of $\mathcal{S}_2,\mathcal{S}_3,\mathcal{S}'_2$ and
$\mathcal{S}'_3$ for $b\in\{0.817,0.89\}$.} \label{BarS3Com}
\end{figure}

When $b_0=817/1000$ we follow the same ideas. For  this case we
consider the functions $\phi_{16}(x)$ and
$\hat{\phi}_{16}(x)=\phi_{16}(x)-1/(9b^3)$. Recall that  the graphic
of $\phi_{16}(x)$ is the sixteenth order approximation to
$\mathcal{S}_2$ and $\mathcal{S}_3$. It is  not difficult to prove
that $N_{\hat{\phi}_{16}}>0$  on the interval $(b_0,x_2)$, with
$x_2=1.6421$ and since the line $x=x_2$ is transversal to $X$ for
$y>0$, $\mathcal{S}_3$ intersects the $x$-axis at a point
$\bar{x}>x_2$. Also we have that $N_{\phi_{16}}>0$ on the interval
$(-3/100,b_0)$ and using the Pad\'e approximant
$\operatorname{Pd}_{[5,1]}(\phi_{16})(x,-3/100)$ we obtain that
$\mathcal{S}_2$ intersect to the $x$-axis in a point
$\hat{x}\in(x_3,0)$ with $x_3>-1.638$. This implies that
$-x_2<\hat{x}<0$ as in the case $b=0.89$. Hence we have the same
situation that in Figure~\ref{BarS3Com}.
\end{proof}

\begin{remark} As it is shown in the proof of Theorem~\ref{mteo}, the values
$0.79$ and $0.817$, obtained in the previous proposition, provide a lower and an
upper bound for $b^*.$ We have tried to shrink the interval where $b^*$ lies using
higher order approximations of the separatrices, but we have not been able to
diminish its size.
\end{remark}

\section*{Appendix I: The Descartes method }\label{ddd}

Given a real polynomial $P(x)=a_nx^n+\cdots+a_1x+a_0$ and a real interval
$I=(\alpha,\beta)$ such that $P(\alpha)P(\beta)\ne0,$ there are two well-known
methods for knowing the number of real roots of $P$ in $I$: the Descartes rule and
the Sturm method.

Theoretically, when all the $a_i\in\Q$ and $\alpha,\beta\in\Q$, the
Sturm approach solves completely the problem. If all the roots of
$P$ are simple it is possible to associate to it a sequence  of
$n+1$ polynomials,  the so called {\it Sturm sequence}, and knowing
the signs of this sequence evaluated at $\alpha$ and $\beta$ we
obtain the exact number of real roots in the interval. If $P$ has
multiple roots it suffices to start with $P/(\gcd(P,P'))$, see
\cite[Sec. 5.6]{sb}.

Nevertheless when the rational numbers have big numerators and
denominators and $n$ is also big, the computers have not enough
capacity to perform the computations to get the Sturm sequence. On
the other hand the Descartes rule is not so powerful but a careful
use, in the spirit of bisection method, can many times solve the
problem.

To recall the Descartes rule we need to introduce some notation.
Given an ordered list of real numbers
$[b_0,b_1,\ldots,b_{n-1},b_{n}]$ we will say that  it has $C$
changes of sign if the following holds: denote by
$[c_0,c_1,\ldots,c_{m-1},c_{m}],$ $m\le n$ the new list obtained
from the previous one after removing the zeros and without changing
the order of the remaining terms. Consider the $m$ non-zero numbers
$\delta_i:=c_ic_{i+1}$, $i=0,\ldots,m-1$. Then $C$ is the number of
negative $\delta_i.$

\begin{theorem}[Descartes rule]\label{des} Let $C$
be the number of changes of sign of the list of ordered numbers
\[
[a_0,a_1,a_2,\ldots,a_{n-1},a_n].
\]
Then the number of positive zeros of the polynomial $P(x)=a_nx^n+\cdots+a_1x+a_0$,
counted with their multiplicities, is $C-2k,$ for some $k\in\N\cup\{0\}.$
\end{theorem}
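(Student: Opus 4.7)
The plan is to prove the equivalent claim that $V(P) - Z_+(P) \geq 0$ and $V(P) - Z_+(P)$ is even, where $V(P) := C$ counts sign changes in the coefficient list of $P$ and $Z_+(P)$ denotes the number of positive zeros of $P$ counted with multiplicity. First I would reduce to the case $a_0 \neq 0$ by factoring out the highest power of $x$ dividing $P$: this removes only zero roots and does not affect the sign-change count on the remaining coefficient list.

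The parity step is a soft continuity argument. One observes that $V(P)$ is odd if and only if $a_0$ and $a_n$ have opposite signs (after deleting zeros from the list, the signs at the two ends determine the parity of alternations). On the other hand, $P(x)$ has the sign of $a_0$ on some interval $(0,\varepsilon)$ and the sign of $a_n$ on $(R,+\infty)$ for $R$ large; hence by the intermediate value theorem the number of sign changes of the continuous function $P$ on $(0,\infty)$ has a prescribed parity, and, since roots of even multiplicity do not flip the sign of $P$ while odd-multiplicity roots do, $Z_+(P)$ has the same parity as $V(P)$.

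For the inequality $V(P)\geq Z_+(P)$, I would induct on $\deg P$. If $P$ has no positive root there is nothing to prove; otherwise pick a positive root $\alpha$ and write $P(x) = (x-\alpha)Q(x)$. The inductive step reduces to the key lemma
\[
V\bigl((x-\alpha)Q\bigr) \geq V(Q) + 1 \qquad \text{for every } \alpha > 0,
\]
applied to $Q$, which has one fewer positive root counted with multiplicity. To prove the lemma, write $Q(x) = \sum_{i=0}^{n-1} b_i x^i$; then $a_j = b_{j-1} - \alpha b_j$ with the conventions $b_{-1}=b_n=0$. In particular, $a_0 = -\alpha b_0$ has sign \emph{opposite} to $b_0$, while $a_n = b_{n-1}$ has the \emph{same} sign as $b_{n-1}$. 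At each intermediate index $j$ where the nonzero coefficients $b_{j-1}$ and $b_j$ have opposite signs, one checks directly that $a_j = b_{j-1} - \alpha b_j$ inherits the sign of $b_{j-1}$. Reading off the signs of $a_0$, $a_n$, and those $a_j$ sitting at the sign transitions of $Q$, one obtains at least $V(Q)+1$ forced sign changes, regardless of what the remaining ``ambiguous'' $a_j$'s (those between two $b_i$'s of the same sign) turn out to be.

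The main obstacle is executing this last combinatorial analysis cleanly when $Q$ has interior zero coefficients: these must be handled by grouping runs of zeros with the neighbouring nonzero blocks and tracking carefully which $a_j$'s still have a sign forced by the transitions of $Q$. Once the key lemma is secured, combining the induction hypothesis $V(Q)\geq Z_+(Q) = Z_+(P)-1$ with the parity step yields $V(P) = Z_+(P) + 2k$ for some $k\in\N\cup\{0\}$, which is precisely the statement.
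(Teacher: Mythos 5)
The paper offers no proof of this statement to compare against: Descartes' rule is quoted in Appendix I as a classical, known theorem (the reference [KM] concerns only its algorithmic use in a bisection scheme), so your proposal must be judged on its own. Its architecture is the standard one and it is sound: the reduction to $a_0\neq 0$, the parity step (both $V(P)$ and $Z_+(P)$ are even precisely when $a_0$ and $a_n$ have the same sign, the latter because only odd-multiplicity roots flip the sign of $P$ between $0^+$ and $+\infty$), and the induction resting on the key lemma $V\bigl((x-\alpha)Q\bigr)\ge V(Q)+1$ for $\alpha>0$ all combine correctly to give $V(P)-Z_+(P)=2k\ge 0$. The only step you leave open, interior zero coefficients of $Q$ in the key lemma, is not a genuine obstacle and closes in one line: if the $m$-th sign change of $Q$ occurs across a run $b_i\neq 0$, $b_{i+1}=\cdots=b_{j-1}=0$, $b_j\neq 0$ with $b_ib_j<0$, then either $j=i+1$ and $a_j=b_i-\alpha b_j$ has the sign of $b_i$, or $j>i+1$ and $a_j=-\alpha b_j$ again has the sign of $b_i$; so in all cases the coefficient of $P$ sitting at the first index of each new block of $Q$ carries the sign of the preceding block. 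Writing $\tau_0,\tau_1,\dots,\tau_v$ ($v=V(Q)$) for the alternating block signs of $Q$, the indices $0<j_1<\cdots<j_v<n$ of these forced coefficients, together with $a_0=-\alpha b_0$ and $a_n=b_{n-1}$, exhibit a subsequence of the coefficients of $P$ with sign pattern $-\tau_0,\tau_0,\tau_1,\dots,\tau_v$, which has $v+1$ alternations; since passing to a subsequence never increases the number of sign changes, $V(P)\ge V(Q)+1$ follows, and your induction and parity steps finish the proof exactly as you describe.
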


\begin{corollary}\label{cdes} With the notations of Theorem~\ref{des} if
$C=0$ then $P(x)$ has not positive roots and if $C=1$ it has exactly one simple
positive root.
\end{corollary}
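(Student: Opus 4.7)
The plan is to derive Corollary~\ref{cdes} as an immediate specialization of Theorem~\ref{des}. By that theorem, the number $R$ of positive roots of $P(x)$, counted with multiplicities, satisfies $R = C - 2k$ for some $k \in \N\cup\{0\}$. Since $R$ is itself a non-negative integer, this equation constrains the admissible values of $k$ very tightly once $C$ is small.

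For the first claim, suppose $C=0$. Then $R = -2k \ge 0$, which forces $k=0$ and hence $R=0$, so $P$ has no positive root. For the second claim, suppose $C=1$. Then $R = 1 - 2k \ge 0$, which forces $k=0$ and hence $R=1$. Thus $P$ has exactly one positive root when counted with multiplicity; but a root of multiplicity $m$ contributes $m$ to $R$, so the unique positive root must have multiplicity $m=1$, i.e.\ it is simple.

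There is essentially no obstacle here: both statements are purely arithmetic consequences of the parity/non-negativity conditions built into the conclusion of Theorem~\ref{des}, and no additional property of $P$ is needed beyond what is already assumed in the theorem.
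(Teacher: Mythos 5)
Your proof is correct and matches the paper's treatment: the paper states Corollary~\ref{cdes} without proof, as an immediate consequence of Theorem~\ref{des}, and your parity/non-negativity argument ($R=C-2k\ge 0$ forces $k=0$ in both cases, with $R=1$ counted with multiplicity forcing simplicity) is precisely that immediate consequence.
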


In order to apply Descartes rule to arbitrary open intervals we introduce the
following definition:

\begin{definition} Given a real polynomial $P(x)$
and a real interval $(\alpha,\beta)$ we construct a new polynomial
\[
N_\alpha^\beta (P)(x):=(x+1)^{\deg P}P\left(\frac{\beta
x+\alpha}{x+1}\right).
\]
We will call $N_\alpha^\beta (P),$ the {\it normalized version of $P$ with respect
to $(\alpha,\beta)$}. Notice that the number of real roots of $P(x)$ in the
interval $(\alpha,\beta)$ is equal to the number of real roots of  $N_\alpha^\beta
(P)(x)$ in $(0,\infty)$.
\end{definition}

The method suggested in \cite{KM} consists in writing
$(\alpha,\beta)=\bigcup_{i=1}^k(\alpha_i,\alpha_{i+1}),$ with
$\alpha=\alpha_1<\alpha_2<\cdots<\alpha_k<\alpha_{k+1}=\beta$ in
such a way that on each $(\alpha_i,\alpha_{i+1})$ it is possible to
apply Corollary \ref{cdes} to the normalized version of the
polynomial. Although there is no  systematic way of searching a
suitable decomposition, we will see that a careful use of these type
of ideas has been good enough to study the number and localization
of the roots for a huge polynomial of degree 965, see
Subsection~\ref{cas3}  in Appendix II.

\section*{Appendix II: A method for controlling the sign\\ of
polynomials in two variables}\label{app}

The main result of this appendix is a new method for controlling the
sign of families of polynomials with two variables. As a starting
point we prove a simple result for one-parameter families of
polynomials in one variable.

Let $G_b(x)$ be a one-parametric  family of polynomials. As usual, we write
$\triangle_x(P)$ to denote the discriminant
 of a polynomial  $P(x)=a_nx^n+\cdots+a_1x+a_0,$  that is,
$$
\triangle_x(P)=(-1)^{\frac{n(n-1)}{2}}\frac{1}{a_n}\operatorname{Res}(P(x),P'(x)),
$$
where $\operatorname{Res}(P,P')$ is the resultant of $P$ and $P'$.

\begin{lemma}\label{ll}
Let \[ G_b(x)=g_n(b)x^n+g_{n-1}(b)x^{n-1}+\cdots+g_1(b)x+g_0(b),\] be a family of
real polynomials  depending also polynomially on a real parameter~$b$ and set
$\Omega=\R$. Suppose that there exists an open interval $I\subset\R$ such that:
\begin{enumerate}[(i)]
\item There is some  $b_0\in I$, such that $G_{b_0}(x)>0$ on $\Omega$.

\item For all $b\in I,$  $\triangle_x(G_b)\neq 0.$

\item  For all $b\in I$, $g_n(b)\ne0.$
\end{enumerate}
Then for all $b\in I$, $G_b(x)>0$ on $\Omega.$

Moreover if $\Omega=\Omega_b=(c(b),\infty)$ for some smooth function $c(b)$,
  the same result holds changing $\Omega$ by this new $\Omega_b$ if we
add the additional hypothesis
\begin{enumerate}[(iv)]
\item  For all $b\in I$, $G_b(c(b))\ne0.$
\end{enumerate}
\end{lemma}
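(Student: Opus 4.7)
The plan is to argue by continuity of roots together with the connectedness of $I$. The key observation is that, under the stated hypotheses, the number of real roots of $G_b$ counted with multiplicity cannot change as $b$ varies inside $I$.

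Concretely, hypothesis (iii) ensures that the degree of $G_b$ stays equal to $n$ on all of $I$, so the $n$ roots of $G_b$ in $\C$ form a continuous unordered multiset as $b$ varies. By (ii), $\triangle_x(G_b)\ne 0$ on $I$, so for every $b\in I$ all roots of $G_b$ are simple; the implicit function theorem then shows that each individual root varies smoothly with $b$, and a real root cannot become complex, nor a complex root become real, without colliding with another root, i.e.\ without forcing $\triangle_x(G_b)=0$. Therefore the number $N(b)$ of real roots of $G_b$ is locally constant, hence constant on the connected set $I$. Hypothesis (i) gives $N(b_0)=0$, so $N(b)=0$ for every $b\in I$; in particular, no $G_b$ has any real root.

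Now fix any $x_0\in\R$. The map $b\mapsto G_b(x_0)$ is a polynomial in $b$, hence continuous on $I$, and never vanishes there, since such a zero would supply a real root of $G_b$ at $x_0$. Because $G_{b_0}(x_0)>0$ by (i) and $I$ is connected, continuity forces $G_b(x_0)>0$ for every $b\in I$. As $x_0\in\R$ was arbitrary, this is exactly the conclusion for $\Omega=\R$.

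For the moving-domain version with $\Omega_b=(c(b),\infty)$, the same scheme goes through after replacing $N(b)$ by the count $M(b)$ of real roots of $G_b$ lying in $(c(b),\infty)$. This count can change only if a root (a)~collides with another one, ruled out by (ii); (b)~escapes through $+\infty$, which would require the degree to drop and is ruled out by (iii); or (c)~crosses the endpoint $c(b)$, which would require $G_b(c(b))=0$ and is ruled out by the additional hypothesis (iv). Hence $M(b)=M(b_0)=0$ on all of $I$, and positivity of $G_b$ on $\Omega_b$ follows by continuity along any path joining $(b_0,y_0)$, with $y_0>c(b_0)$ and $G_{b_0}(y_0)>0$, to a given $(b,x_0)$ inside the connected open set $\{(\tilde b,\tilde x)\in I\times\R : \tilde x>c(\tilde b)\}$, on which $G$ does not vanish. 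There is no serious technical obstacle: the substance of the proof is the continuous dependence of roots on the parameter, together with the enumeration of the three mechanisms by which a real root can appear or disappear in a (possibly moving) interval. The only mildly delicate point is formulating (c) carefully, which is cleaned up by viewing the above set as an open subset of $I\times\R$ whose boundary lies on the graph $\{x=c(b)\}$, so that any real root crossing into or out of it must pass through that graph, which is precisely what (iv) forbids.
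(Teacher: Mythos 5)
Your proof is correct and follows essentially the same route as the paper's: continuity of the roots (guaranteed by the non-vanishing leading coefficient), the discriminant condition blocking real roots from emerging out of complex-conjugate pairs, and hypotheses (iii)/(iv) blocking entry of roots from infinity or through the moving boundary, so that the count of real roots in $\Omega$ (resp. $\Omega_b$) stays at zero and positivity propagates by connectedness. Your write-up merely makes explicit two steps the paper leaves implicit, namely the root-collision argument via conjugate pairs and the final sign-propagation over the connected set $\{(b,x): b\in I,\ x>c(b)\}$.
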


\begin{proof} The key point of the proof is that the roots (real and complex)  of $G_b$
depend continuously of $b,$ because  $g_n(b)\ne0.$ Notice that
hypotheses (iii) and (iv) prevent that moving $b$ some root enters
in~$\Omega$ either from infinity or from the boundary of~$\Omega$,
respectively. On the other hand if moving $b$ some real roots appear
from~$\C$, they do appear trough a double real root that is detected
by the vanishing of $\triangle_x(G_b)$. Since by item (ii),
$\triangle_x(G_b)\neq 0$ no real root appears in this way. Hence,
for all $b\in I,$ the number of real roots of any $G_b$ is the same.
Since by item (i) for $b=b_0$, $G_{b_0}>0$ on $\Omega$, the same
holds for all $b\in I.$
\end{proof}

To state the corresponding result for  families of polynomials with two variables
inspired in the above lemma, see Proposition~\ref{lem-inf}, we need to prove some
results about the iterated discriminants (to replace hypothesis (ii) of the lemma)
and to recall how to study the infinity of planar curves (to replace hypothesis
(iii)).

\subsection{The double discriminant}
Let $F(x,y)$ be a complex polynomial on $\mathbb{C}^2$. We write $F$
as
\begin{equation}\label{F}
F(x,y)=a_ny^n+a_{n-1}y^{n-1}+a_{n-2}y^{n-2}+\ldots+a_1y+a_0,
\end{equation}
where $a_i=a_i(x)\in \mathbb{C}[x]$. Then
$$
\triangle_y(F)=(-1)^{\frac{n(n-1)}{2}}\frac{1}{a_n}\operatorname{Res}(F,{\partial
F}/{\partial y}),
$$
and this resultant can be
 computed as the determinant of
the Sylvester matrix of dimension $(2n-1)\times(2n-1)$,  see
\cite{cox}, {\footnotesize
$$
 S=\left(\begin{array}{cccccccc}
a_n&0&0&0&na_n&0&0&0\\
a_{n-1}&a_n&0&0&(n-1)a_{n-1}&na_n&0&0\\
a_{n-2}&a_{n-1}&\ddots&0&(n-2)a_{n-2}&(n-1)a_{n-1}&\ddots&0\\
\vdots&&\ddots&a_n &\vdots&&\ddots&na_n\\
&\vdots&&a_{n-1}&&\vdots&&(n-1)a_{n-1}\\
a_0&&&&a_1&&& \\
0&a_0&&\vdots&0&a_1&&\vdots\\
0&0&\ddots&&0&0&\ddots&\\
0&0&0&a_0&0&0&0&a_1
\end{array}\right).$$}

 We will write $\triangle^2_{y,x}(F)=
\triangle_x(\triangle_y(F))$. Analogously we can compute
$\triangle^2_{x,y}(F).$ This so called double discriminant plays a
special role in the characterization of singular curves of
$\{F(x,y)=0\}$ and it is also used in applications, see for instance
\cite{Al-S-Se,Laz,Pe-Ro}. In particular we prove the following
result.

\begin{proposition}\label{ddisc}
Let $F(x,y)$ be a complex polynomial on $\mathbb{C}^2$. If
$\{F(x,y)=0\}\subset\mathbb{C}^2$ has a singular point, that is, if there exists a
point $(x_0,y_0)\in\mathbb{C}^2$ such that $F(x_0,y_0)=\partial
F(x_0,y_0)/\partial x=\partial F(x_0,y_0)/\partial y=0$, then
$\triangle^2_{y,x}(F)=\triangle^2_{x,y}(F)=0$.
\end{proposition}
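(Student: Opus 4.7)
The plan is to show that $x_0$ is a multiple root of the univariate polynomial $D(x):=\triangle_y(F)(x)$; once this is established, $\triangle_x(D)=\triangle^2_{y,x}(F)=0$ by the very definition of the discriminant, and the analogous argument with the roles of $x$ and $y$ interchanged yields $\triangle^2_{x,y}(F)=0$. Thus the whole proposition reduces to the two equalities $D(x_0)=0$ and $D'(x_0)=0$.

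The first is immediate: by definition $\triangle_y(F)(x_0)=0$ iff the polynomial $F(x_0,\cdot)\in\C[y]$ has a multiple root, and the hypothesis $F(x_0,y_0)=F_y(x_0,y_0)=0$ exhibits $y_0$ as a root of multiplicity $k\ge 2$ of $F(x_0,\cdot)$.

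For the second equality $D'(x_0)=0$, I would establish the stronger statement that $D$ vanishes at $x_0$ to order at least $2$. After translating $(x_0,y_0)$ to the origin, Weierstrass preparation factors $F(x,y)=u(x,y)\,P(x,y)$ in a neighborhood of $(0,0)$ with $u$ an analytic unit and
\begin{equation*}
P(x,y)=y^k+a_{k-1}(x)\,y^{k-1}+\cdots+a_1(x)\,y+a_0(x),\qquad a_i(0)=0.
\end{equation*}
A short computation using $u(0,0)\ne 0$ shows that the extra vanishing $F_x(0,0)=0$ forces $a_0'(0)=0$. Of the $n$ roots of $F(x,\cdot)$, exactly $k$ (namely the roots of $P$) approach $0$ as $x\to 0$, while the remaining $n-k$ stay bounded away from $0$. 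Hence in the product formula
\begin{equation*}
D(x)=a_n(x)^{2n-2}\prod_{1\le i<j\le n}(\alpha_i(x)-\alpha_j(x))^2,
\end{equation*}
only the $\binom{k}{2}$ factors with both indices among the $k$ ``near'' roots can vanish at $x_0$, and the order of $D(x)$ at $x_0$ is bounded below by that of $\triangle_y(P)(x)$ at $0$. For $k=2$ this local discriminant is $a_1(x)^2-4\,a_0(x)$, of order at least $2$ since $a_1(x)=O(x)$ and $a_0(x)=O(x^2)$ thanks to $a_0'(0)=0$. For $k\ge 3$ a Newton-polygon estimate on the Puiseux expansions of the roots of $P$ bounds their fractional exponents below by $1/k$, producing order at least $2\binom{k}{2}/k=k-1\ge 2$.

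The main obstacle I foresee is making the Newton-polygon / Puiseux estimate in the case $k\ge 3$ fully rigorous. An appealing alternative that bypasses Puiseux series altogether is to interpret the order of vanishing of $\operatorname{Res}_y(F,F_y)(x)$ at $x_0$ as the local intersection multiplicity $I_{(x_0,y_0)}(F,F_y)$ and to invoke the classical lower bound
\begin{equation*}
I_{(x_0,y_0)}(F,F_y)\ \ge\ \operatorname{mult}_{(x_0,y_0)}(F)\cdot\operatorname{mult}_{(x_0,y_0)}(F_y)\ \ge\ 2\cdot 1=2,
\end{equation*}
valid whenever $F$ and $F_y$ share no common local branch through $(x_0,y_0)$, which holds automatically if $F$ is squarefree --- the only non-degenerate case, since otherwise $\triangle_y(F)\equiv 0$ and the claim is trivial.
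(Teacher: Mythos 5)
Your proposal is correct in substance, and it shares the paper's overall reduction --- both proofs establish that $D(x)=\triangle_y(F)(x)$ vanishes to order at least two at $x_0$, whence $D$ and $D'$ share a root and $\triangle_x(D)=0$ --- but the way you produce the double zero is genuinely different. The paper is purely algebraic: normalizing $(x_0,y_0)=(0,0)$, the three hypotheses translate into $a_0(x)=x^2\hat a_0(x)$ and $a_1(x)=x\hat a_1(x)$ for $F=\sum_i a_i(x)y^i$; expanding the Sylvester determinant of $(F,F_y)$ along its last row (whose nonzero entries are only $a_0$ and $a_1$), and then the relevant minor again along its last row, gives $\operatorname{Res}_y(F,F_y)=x^2P(x)$ at once, uniformly in the multiplicity $k$ of the root $y_0$. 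You instead work locally and analytically: Weierstrass preparation, the observation $a_0'(0)=0$, and then either a Newton--Puiseux estimate ($k=2$ via $a_1=O(x)$, $a_0=O(x^2)$; $k\ge3$ via root exponents $\ge 1/k$, giving order $\ge k-1$) or the intersection-theoretic bound $\operatorname{ord}_{x_0}\operatorname{Res}_y(F,F_y)\ge I_{(x_0,y_0)}(F,F_y)\ge 2$. What your route buys is geometric insight: it locates the double vanishing in local intersection data, and it shows that when $k\ge3$ the hypothesis $F_x(x_0,y_0)=0$ is superfluous. What the paper's route buys is elementariness and uniformity: a short determinant expansion with no preparation theorem, no Puiseux series, no intersection multiplicities, and no case distinction on $k$.

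Two caveats on your write-up. First, your closing dichotomy is inaccurate: squarefreeness of $F$ does not guarantee that $F$ and $F_y$ share no common branch through $(x_0,y_0)$, because a vertical-line factor $x-x_0$ of $F$ also divides $F_y$; for instance $F=x(y^2-x)$ is squarefree and singular at the origin, yet $I_{(0,0)}(F,F_y)=\infty$. Conversely, a repeated factor of $F$ depending only on $x$ does not force $\triangle_y(F)\equiv0$. This exceptional case needs a separate (easy) argument: if $x-x_0$ divides $F$, then $(x-x_0)^{2n-2}$ divides $\triangle_y(F)$, where $n=\deg_y F$, and the conclusion follows directly. Second, both you and the paper pass from $\operatorname{Res}_y(F,F_y)$ to $\triangle_y(F)$ by dividing by the leading coefficient $a_n(x)$, which silently costs an order of vanishing when $a_n(x_0)=0$; this loose end is common to the two proofs, and your Weierstrass factorization $F=QP$, combined with the identity $\triangle_y(QP)=\triangle_y(Q)\,\triangle_y(P)\,\operatorname{Res}_y(Q,P)^2$, is in fact the cleanest way to close it.
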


\begin{proof}
  We write $F(x,y)$ in the form \eqref{F}. Without lost
of generality we assume that $(x_0,y_0)=(0,0)$. Then from the assumptions it
follows that $a_0(0)=a_0'(0)=0$ and $a_1(0)=0$, that is, $a_0(x)=x^2\hat{a}_0(x)$
and $a_1(x)=x\hat{a}_1(x)$, with both  $\hat a_i$ also  polynomials.

By using the Sylvester matrix $S$ defined above, we have that
\begin{equation}\label{det}
\det S=(-1)^n a_0\det (S(2n-1\mid n-1))+a_1\det (S(2n-1\mid 2n-1)),
\end{equation}
where $S(i\mid j)$ means the matrix obtained from $S$ by removing the $i$-th row
and the $j$-th column.

Notice that the elements of the last row of $S(2n-1\mid 2n-1)$ are only $0,a_0$
and $a_1$. Therefore, developing the  determinant of this matrix from this row we
get that $\det (S(2n-1\mid 2n-1))=x Q(x),$ for some polynomial $Q(x).$

Hence, by using~\eqref{det}, we get that  $\det S=x^2 P(x)$ with
$P(x)$ another polynomial. This implies that $\triangle_{y}(F)$ has
a double zero at $x=0$ and hence $\triangle^2_{y,x}(F)=0$.

Analogously we can prove that $\triangle_{x}(F)$ has a double zero
at $y=0$ and hence $\triangle^2_{x,y}(F)=0$.
\end{proof}

\begin{corollary}\label{cccc}
 Consider a one-parameter family of polynomials $F_b(x,y)$, depending also polynomially
on $b.$ The values of $b$ such that the algebraic curve $F_b(x,y)=0$
has some singular point in $\C^2$ have to be zeros of the polynomial
\[
\triangle^2(F_b):=\gcd\big(\triangle^2_{x,y}(F_b),\triangle^2_{y,x}(F_b)\big).
\]
\end{corollary}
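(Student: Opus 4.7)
The plan is to deduce the corollary directly from Proposition~\ref{ddisc} by chasing polynomial dependence on the parameter $b$ and invoking a standard fact about gcds of univariate polynomials. First I would fix a value $b_0$ such that $F_{b_0}(x,y)=0$ has a singular point in $\C^2$. Applying Proposition~\ref{ddisc} to the concrete polynomial $F_{b_0}(x,y)\in\C[x,y]$ yields the two equalities
\[
\triangle^2_{x,y}(F_{b_0})=0 \qquad\text{and}\qquad \triangle^2_{y,x}(F_{b_0})=0.
\]

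Next I would argue that when $F_b(x,y)$ depends polynomially on $b$, each of the iterated discriminants $\triangle^2_{x,y}(F_b)$ and $\triangle^2_{y,x}(F_b)$ is itself a polynomial in $b$. This is because the Sylvester resultant (and hence the discriminant) is a polynomial expression in the coefficients of its input: viewing $F_b$ as a polynomial in $y$ with coefficients in $\Q[x,b]$, the first discriminant $\triangle_y(F_b)$ lies in $\Q[x,b]$; then $\triangle_x$ applied to this gives a polynomial in $b$ alone, and symmetrically for $\triangle^2_{y,x}(F_b)$. Thus, evaluating at $b_0$ commutes with forming the double discriminant, so the two equalities above say precisely that $b_0$ is a common root of the two polynomials $\triangle^2_{x,y}(F_b),\triangle^2_{y,x}(F_b)\in\Q[b]$ (or $\R[b]$).

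Finally, I would invoke the elementary fact that in a univariate polynomial ring over a field, a point is a common zero of two polynomials if and only if it is a zero of their gcd. Consequently $b_0$ is a zero of $\triangle^2(F_b)=\gcd\bigl(\triangle^2_{x,y}(F_b),\triangle^2_{y,x}(F_b)\bigr)$, which is the claim.

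There is no real obstacle here; the only mildly delicate point is making sure that the iterated discriminant is genuinely a polynomial in $b$ rather than a rational function, which requires a brief comment that the leading coefficients appearing in the Sylvester construction do not need to be inverted because the resultant is defined as the determinant of the Sylvester matrix (so no spurious denominators are introduced by treating $F_b$ as a polynomial in $y$ with coefficients in $\Q[x,b]$ and then in $x$ with coefficients in $\Q[b]$). With that remark in place, the proof reduces to the one-line specialization of Proposition~\ref{ddisc} plus the gcd characterization of common roots.
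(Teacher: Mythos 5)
Your proposal is correct and is exactly the argument the paper intends: the corollary is stated without a separate proof precisely because it follows from Proposition~\ref{ddisc} applied at each fixed $b_0$, combined with the observation that the double discriminants are polynomials in $b$ and that a common root of two polynomials in $\Q[b]$ is a root of their $\gcd$ (B\'ezout). Your extra remark on why the iterated discriminant is genuinely polynomial in $b$ (no denominators from the Sylvester construction) is a sensible precision that the paper leaves implicit.
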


By simplicity we will also call  the polynomial $\triangle^2(F_b)$, {\it double
discriminant of the family $F_b(x,y)$}. As far as we know the above necessary
condition for detecting  algebraic curves with singular points is new.

\begin{remark} (i) Notice that if in
Corollary~\ref{cccc}, instead of imposing  that for $b\in I$,
$\triangle^2(F_b)\ne0,$ it suffices to check only that either
$\triangle^2_{x,y}(F_b)\ne0$ or $\triangle^2_{y,x}(F_b)\ne0.$

(ii) The converse of the Proposition \ref{ddisc} is not true. For instance  if we
consider the polynomial $F(x,y)=x^3y^3+x+1$ then
$\triangle^2_{y,x}(F)=\triangle^2_{x,y}(F)=0$, however $F_x(x,y)=3x^2y^3+1$ and
$F_y(x,y)=3x^3y^2$ hence $\{F(x,y)=0\}$ does not have singular points.

(iii) Sometimes $\triangle^2_{y,x}(F)\ne \triangle^2_{x,y}(F)$. For instance this
is the case when $F=y^2+x^3+bx^2+bx$ because
\[
\triangle^2_{x,y}(F)=-110592b^9(b-4)(b-3)^6\quad\mbox{and}\quad
\triangle^2_{y,x}(F)=256b^3(b-4).
\]
Notice that $\triangle^2(F)=b^3(b-4).$
\end{remark}

\subsection{Algebraic curves at infinity}
 Let
$$F(x,y)=F^0(x,y)+F^1(x,y)+\cdots+F^n(x,y)$$
be a polynomial on $\R^2$ of degree~$n$. We denote by
$$\widetilde{F}(x,y,z)=z^nF^0(x,y)+z^{n-1}F^1(x,y)+\cdots+F^n(x,y)$$
its homogenization in $\mathbb{RP}^2$.

For studying $\widetilde{F}(x,y,z)$ in $\mathbb{RP}^2$ we can use
its expressions in the three canonical charts of $\mathbb{RP}^2$,
$\{[x:y:1]\}$, $\{[x:1:z]\}$, and $\{[1:y:z]\}$, which can be
identified with the real planes $\{(x,y)\}$,$\{(x,z)\}$, and
$\{(y,z)\}$ respectively. Of course the expression in the chart
$\{[x:y:1]\}$, that is, in the $(x,y)$-plane is precisely $F(x,y)$.

We denote by $\widetilde{F}_1(x,z)$ and $\widetilde{F}_2(y,z)$ the expressions of
the function $\widetilde{F}$ in the planes $\{(x,z)\}$ and $\{(y,z)\}$,
respectively. Therefore $\widetilde{F}_1(x,z)=\widetilde{F}(x,1,z)$ and
$\widetilde{F}_2(y,z)=\widetilde{F}(1,y,z)$.

Let $[x^*:y^*:z^*]\in \mathbb{RP}^2$ be a point of
$\{\widetilde{F}=0\}$. If $z^*\neq 0$, then $[x^*:y^*:z^*]$
corresponds to a point in $\R^2$, otherwise it is said that
$[x^*:y^*:0]$ is a point of $F$ at infinity. Notice that the points
at infinity of $F$ correspond to the points $[x^*:y^*:0]$ where
$(x^*,y^*)\ne(0,0)$ is a solution of the homogeneous part of degree
$n$ of $F$,
\[\mathcal{H}_n(F(x,y))=F^n(x,y),
\]
that is $F^n(x^*,y^*)=0$.  Equivalently, these are  the zeros of
$\widetilde{F}_1(x,0)$ and $\widetilde{F}_2(y,0)$. In other words,
$[x^*:y^*:0]$ is a point at infinity of $F$ if and only if $x^*/y^*$
is a zero of $\widetilde{F}_1(x,0)=F^n(x,1)$ or  $y^*/x^*$ is a zero
of $\widetilde{F}_2(y,0)=F^n(1,y)$.

Let $\Omega\subset\R^2$ be an unbounded open subset with boundary $\partial
\Omega$ formed by finitely many algebraic curves. It is clear that this subset can
be extended to $\mathbb{RP}^2$. We will call the adherence of this extension
$\bar\Omega.$ When a point at infinity of $F$ is also in $\bar\Omega$, for short
we will say that is a point at infinite which is also in $\Omega$.

\subsection{Isolated points of families of algebraic curves}

To state our main result we need explicit  conditions to check when  a point of a
real algebraic curve $G(x,y)=0$ is isolated. Recall that it is said that
 a point ${\bf p}\in\R^2$ on the curve is {\it isolated} if there exists an open
 neighborhood ${\mathcal U}$ of ${\bf p},$  such that
\[
 {\mathcal U}\cap\{(x,y)\in\R^2\,:\, G(x,y)=0\}={\bf p}.
\]
 Clearly isolated points are singular points of the curve.
 Next result provides an useful criterion
 to deal with this question.

 \begin{lemma}\label{nnou} Let $G(x,y)$ be a real polynomial.
 Assume that  $(0,0)\in\{G(x,y)=0\}$  and that there are
 natural numbers $p,q$ and $m$, with $\gcd(p,q)=1$, and a polynomial
$G^0$ satisfying
 ${G^0(\varepsilon^pX,\varepsilon^qY)}=\varepsilon^m G^0(X,Y)$,
 and such that for all $\varepsilon>0$,
 \[
 {G(\varepsilon^pX,\varepsilon^qY)}=\varepsilon^m G^0(X,Y)+
 \varepsilon^{m+1}G^1(X,Y,\varepsilon),
 \]
for some polynomial function  $G^1$.  If the only real solution of  $G^0(X,Y)=0$
is
 $(X,Y)=(0,0)$, then  the origin is an isolated point of $G(x,y)=0$.
 \end{lemma}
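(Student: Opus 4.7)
The plan is to argue by contradiction, exploiting the quasi-homogeneous structure to turn the hypothesis on $G^0$ into a statement about $G$ in a punctured neighbourhood of the origin. Assume that $(0,0)$ is not isolated in the real zero set of $G$; then there exists a sequence $(x_n,y_n)\to(0,0)$ with $G(x_n,y_n)=0$ and $(x_n,y_n)\ne(0,0)$. To each such point I would associate the positive scaling parameter
$$\varepsilon_n:=\bigl(|x_n|^{2q}+|y_n|^{2p}\bigr)^{1/(2pq)}>0,$$
together with $X_n:=x_n/\varepsilon_n^{\,p}$ and $Y_n:=y_n/\varepsilon_n^{\,q}$. Then $(x_n,y_n)=(\varepsilon_n^{\,p}X_n,\varepsilon_n^{\,q}Y_n)$ and $|X_n|^{2q}+|Y_n|^{2p}=1$, so $(X_n,Y_n)$ lies on the compact weighted ``sphere''
$$K:=\{(X,Y)\in\R^2\colon |X|^{2q}+|Y|^{2p}=1\},$$
which avoids the origin; and $(x_n,y_n)\to(0,0)$ forces $\varepsilon_n\to 0^+$.

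Substituting into the quasi-homogeneous expansion of $G$ provided by the hypothesis yields
$$0=G(x_n,y_n)=\varepsilon_n^{\,m}\,G^0(X_n,Y_n)+\varepsilon_n^{\,m+1}\,G^1(X_n,Y_n,\varepsilon_n),$$
and dividing by $\varepsilon_n^{\,m}>0$ gives $G^0(X_n,Y_n)+\varepsilon_n\,G^1(X_n,Y_n,\varepsilon_n)=0$. By compactness of $K$, a subsequence of $(X_n,Y_n)$ converges to some $(X^*,Y^*)\in K$, so in particular $(X^*,Y^*)\ne(0,0)$. Since $G^1$ is a polynomial it is bounded on the compact set $K\times[0,1]$, so the second summand tends to zero, and passing to the limit gives $G^0(X^*,Y^*)=0$ with $(X^*,Y^*)\ne(0,0)$, contradicting the hypothesis that the origin is the only real zero of $G^0$.

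The argument is essentially a quasi-homogeneous blow-up, so the only step that requires attention is the choice of a normalization on which the rescaling is based. The weighted sphere $K$ works because the even exponents $2q$ and $2p$ make it well defined for every sign of $X$ and $Y$, the quantity $|X|^{2q}+|Y|^{2p}$ is strictly positive off the origin, and under $(X,Y)\mapsto(\varepsilon^{p}X,\varepsilon^{q}Y)$ it scales by the factor $\varepsilon^{2pq}$, which allows $\varepsilon_n$ to be recovered from $(x_n,y_n)$ uniquely. The coprimality condition $\gcd(p,q)=1$ does not enter the argument itself but guarantees that the quasi-homogeneous weights are in lowest terms, which is the natural minimality hypothesis in the Newton-polygon context where the lemma is intended to be applied.
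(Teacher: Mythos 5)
Your proof is correct, and it shares the paper's global strategy --- a contradiction argument via a quasi-homogeneous blow-up of a sequence of zeros accumulating at the origin --- but your normalization is genuinely different, and the difference is where all the technical work lies. The paper assumes without loss of generality that $G^0\ge 0$, sets $g_n:=\bigl(G^0(x_n,y_n)\bigr)^{1/m}$, and rescales the sequence onto the level set $K=\{G^0=1\}$; the compactness of this set is not obvious and occupies most of the paper's proof: one writes $G^0(x,y)=P_{m_0}(x^q,y^p)$ with $P_{m_0}$ homogeneous of degree $m_0$ (the step where $\gcd(p,q)=1$ is actually used), observes that $P_{m_0}$ has no real linear factors because $G^0$ vanishes only at the origin, and hence factors $G^0$ as a product of terms $A_ix^{2q}+B_ix^qy^p+C_iy^{2p}$ with $B_i^2-4A_iC_i<0$, each of which blows up along any unbounded sequence. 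You instead rescale onto the weighted sphere $K=\{|X|^{2q}+|Y|^{2p}=1\}$, whose compactness is immediate, recovering $\varepsilon_n$ explicitly from the point; this sidesteps the sign normalization, the factorization lemma, and indeed any use of coprimality (your closing remark that the argument never needs $\gcd(p,q)=1$ is accurate, so your proof even shows the lemma holds without that hypothesis). The trade-off: your route is shorter and more elementary for this lemma, whereas the paper's factorization is not wasted effort in context --- it is precisely what powers the parametric version (Proposition~\ref{nnnou}), where the generalized polar coordinates $x=\rho^p\Cs(\varphi)$, $y=\rho^q\Sn(\varphi)$ and the uniform bounds $0<L(b)\le G^0_b(\Cs(\varphi),\Sn(\varphi))\le U(b)$ rest on those definite quadratic factors. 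That said, your weighted-sphere normalization would uniformize just as well over compact parameter sets (take the minimum of $|G^0_b|$ on $\mathcal{V}\times K$), so either approach extends to the family setting.
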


\begin{proof}  Assume without loss of generality that $G^0\ge0.$
We start proving  that $K:=\{(x,y)\in\R^2\,:\, G^0(x,y)=1 \}$ is a
compact set. Clearly it is closed, so it suffices to prove that it
is bounded. Since $G^0$ is a quasi-homogeneous  polynomial we know
that there  exists a natural number $m_0$ such that $m=m_0pq$ and
$G^0(x,y)=P_{m_0}(x^q,y^p)$, where $P_{m_0}$ is a real homogeneous
polynomial of degree $m_0$. The fact that the only real solution of
the equation $G^0(x,y)=0$ is $x=y=0$ implies that $P_{m_0}$ has not
linear factors when we decompose it as a product of real irreducible
factors. Hence $m_0$ is even and $P_{m_0}(x,y)=\prod_{i=1}^{m_0/2}
(A_i x^2+ B_i x y + C_i y^2)$, with $B_i^2-4A_iC_i<0.$ As a
consequence,
\begin{equation}\label{prodd}
G^{0}(x,y)=\prod_{i=1}^{m_0/2} (A_i x^{2q}+ B_i x^q y^p + C_i
y^{2p}),\quad \mbox{with}\quad B_i^2-4A_iC_i<0.
\end{equation}
Assume, to arrive to a contradiction, that $K$ is unbounded.
Therefore it should exist a sequence $\{(x_n,y_n)\}$, tending to
infinity, and such that $G^0(x_n,y_n)=1$. But this is impossible
because the conditions $B_i^2-4A_iC_i<0,$ $i=1,\ldots,m_0/2$, imply
that all the terms $A_i x_n^{2q}+ B_i x_n^q y_n^p + C_i y_n^{2p}$ in
\eqref{prodd} go to infinity. So $K$ is compact.

Let us prove that $(0,0)$ is an isolated point of
$\{(x,y)\in\R^2\,:\,G(x,y)=0\}.$ Assume, to arrive to a
contradiction, that it is not. Therefore there  exists a sequence of
points $\{(x_n,y_n)\}$, tending to 0 and such that $G(x_n,y_n)=0$
for all $n\in\N$.  Consider $G^0(x_n,y_n)=:(g_n)^m>0.$ It is clear
that $\lim_{n\to\infty}(g_n)^m=0.$ Write
$(x_n,y_n)=((g_n)^pu_n,(g_n)^qv_n)$. Notice that
\[
(g_n)^m=G^0(x_n,y_n)=G^0(g_n^pu_n,g_n^qv_n)=(g_n)^mG^0(u_n,v_n).
\]
Then $G^0(u_n,v_n)=1$ and  $(u_n,v_n)\in K,$ for all $n\in\N$. Therefore, taking a
subsequence if necessary, we can assume that
\begin{equation}\label{limit}
\lim_{n\to\infty}(u_n,v_n)=(u^*,v^*)\in K.
\end{equation}
We have that $0=G(x_n,y_n)=(g_n)^m+(g_n)^{m+1}G^1(u_n,v_n,g_n)$.
Dividing by $(g_n)^m$ we obtain that $1=0+g_nG^1(u_n,v_n,g_n)$, and
passing to the limit we get that $1=0$ which gives the desired
contradiction.

Notice that to prove that $\lim_{n\to\infty}g_nG^1(u_n,v_n,g_n) =0$
we need to know that the sequence $\{(u_n,v_n)\}$ remains bounded
and this fact is a consequence of~\eqref{limit}.
\end{proof}

We remark that the suitable  values $p,q$ and $m$ and the function $G^0$ appearing
in the statement of Lemma~\ref{nnou} are usually found  by using the Newton
diagram associated to $G.$

We also need to introduce a new related concept for families of
curves. Consider a one-parameter family of algebraic curves
$G_b(x,y)=0$, $b\in I,$ also depending polynomially of $b$. Let
$(x_0,y_0)\in\R^2$ be an isolated point of $G_b(x,y)=0$ for all
$b\in I$, we will say that $(x_0,y_0)$ is {\it uniformly isolated}
for the family $G_b(x,y)=0$, $b\in I$ if for each $b\in I$ there
exist neighborhoods $\mathcal V\subset I$ and $\mathcal W\subset
\R^2$, of $b$ and $(x_0,y_0)$ respectively, such that for all
$b\in\mathcal{V}$,
\begin{equation}\label{iii} \{(x,y)\in\R^2\,:\, G_b(x,y)=0\}\cap \mathcal{W}=(x_0,y_0).
\end{equation}

Next example shows a one-parameter family of curves that has the
origin isolated for all $b\in\R$ but it is not uniformly isolated
for $b\in I$, with $0\in I,$
\begin{equation}\label{unif}
G_b(x,y)=(x^2+y^2)(x^2+y^2-b^2)(x-1).
\end{equation}
It is clear that  the origin is an isolated point of
$\{G_b(x,y)=0\}$ for all $b\in\R$, but there is no open neighborhood
$\mathcal{W}$ of $(0,0),$ such that~\eqref{iii} holds for any $b$ in
a neighborhood of $b=0$.

Next result is a version of Lemma~\ref{nnou} for one-parameter
families. In its proof we will use some periodic functions
introduced by Lyapunov in his study
 of the stability of degenerate critical points, see \cite{lia}. Let us recall
 them.

Let $u(\varphi)=\Cs(\varphi)$ and $v(\varphi)=\Sn(\varphi)$ be the solutions of
the Cauchy problem:
\[
u'=-v^{2p-1},\,v' =u^{2q-1},\quad u(0)=\root{2q}\of{1/p}\quad \mbox{ and }\quad
v(0)=0,\] where the prime denotes the derivative with respect to $\varphi$.

Then  $x=\Cs(\varphi)$ and $y=\Sn(\varphi)$ parameterize the algebraic curve $p
x^{2q}+q y^{2p}=1,$ that is  $p\Cs^{2q}(\varphi)+q\Sn^{2p}(\varphi)=1,$ and both
functions are smooth $T_{p,q}$-periodic functions, where
$$
T=T_{p,q}=2p^{-1/2q}q^{-1/2p}
\frac{\Gamma\left(\frac{1}{2p}\right)\Gamma\left(\frac{1}{2q}\right)}
{\Gamma\left(\frac{1}{2p}+\frac{1}{2q}\right)},
$$
and $\Gamma$ denotes the  Gamma function.

 \begin{proposition}\label{nnnou} Let $G_b(x,y)$ be a family of real
 polynomials which also depends polynomially on $b$.
 Assume that  $(0,0)\in\{G_b(x,y)=0\}$  and that there are
 natural numbers $p,q$ and $m$, with $\gcd(p,q)=1$, and a polynomial
$G^0_b$ satisfying
 ${G^0_b(\varepsilon^pX,\varepsilon^qY)}=\varepsilon^m G^0_b(X,Y)$,
 and  such that for all $\varepsilon>0$,
 \[
 {G_b(\varepsilon^pX,\varepsilon^qY)}=\varepsilon^m G^0_b(X,Y)+
 \varepsilon^{m+1}G^1_b(X,Y,\varepsilon),
 \]
for some polynomial function  $G^1_b$.  If for all $b\in I\subset \R$, the only
real solution of $G_b^0(X,Y)=0$ is
 $(X,Y)=(0,0)$, then  the origin is an uniformly isolated
 point of $G_b(x,y)=0$ for all $b\in I$.
 \end{proposition}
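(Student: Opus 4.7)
The plan is to mimic the argument of Lemma~\ref{nnou}, replacing its one-shot compactness step with a compactness argument that is uniform in $b$ in a neighborhood of any given $b_0\in I$. The key tool is the quasi-homogeneous parametrization furnished by the Lyapunov generalized trigonometric functions $\Cs(\varphi), \Sn(\varphi)$ recalled just before the statement.

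Fix $b_0\in I$. I would introduce generalized polar coordinates $(r,\varphi)\in [0,\infty)\times [0,T_{p,q}]$ by writing $x=r^p\Cs(\varphi)$, $y=r^q\Sn(\varphi)$. By the quasi-homogeneity of $G^0_b$ this gives
\[
G_b(r^p\Cs(\varphi),r^q\Sn(\varphi))=r^m\bigl(h_b(\varphi)+r\,R_b(\varphi,r)\bigr),
\]
where $h_b(\varphi):=G^0_b(\Cs(\varphi),\Sn(\varphi))$ and $R_b(\varphi,r):=G^1_b(\Cs(\varphi),\Sn(\varphi),r)$. The second factor is the only one that can vanish for $r>0$, so it suffices to prove that it stays strictly positive in absolute value on a set of the form $\mathcal V\times[0,T_{p,q}]\times[0,r_0]$ for some neighborhood $\mathcal V\subset I$ of $b_0$ and some $r_0>0$.

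For the first summand I would observe that $h_{b_0}$ is continuous and $T_{p,q}$-periodic in $\varphi$; moreover it never vanishes, because the curve $\{p\Cs^{2q}+q\Sn^{2p}=1\}$ misses the origin and the hypothesis on $G^0_{b_0}$ forces $G^0_{b_0}\ne 0$ off $(0,0)$. By compactness of $[0,T_{p,q}]$ there is $\delta>0$ with $|h_{b_0}(\varphi)|\geq 2\delta$. The joint continuity of $(b,\varphi)\mapsto h_b(\varphi)$ (polynomial dependence on $b$) then produces a neighborhood $\mathcal V\subset I$ of $b_0$ with $|h_b(\varphi)|\geq \delta$ for all $(b,\varphi)\in\mathcal V\times[0,T_{p,q}]$, after possibly shrinking $\mathcal V$ to a compact one. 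For the remainder term, $R_b(\varphi,r)$ is a polynomial in its variables, hence bounded by some $C>0$ on the compact set $\overline{\mathcal V}\times[0,T_{p,q}]\times[0,1]$. Choosing $r_0:=\min(1,\delta/(2C))$ yields
\[
|h_b(\varphi)+r\,R_b(\varphi,r)|\;\geq\;\delta-r\,C\;\geq\;\delta/2>0
\]
for all $(b,\varphi,r)\in\mathcal V\times[0,T_{p,q}]\times[0,r_0]$. Translating back to $(x,y)$, the neighborhood $\mathcal W:=\{(x,y):|x|^{1/p}+|y|^{1/q}<r_0'\}$ with $r_0'$ small enough satisfies the defining property \eqref{iii} of uniform isolation.

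The main obstacle I anticipate is the bookkeeping needed to guarantee a \emph{uniform} lower bound for the leading term and a \emph{uniform} upper bound for the remainder on the same parameter neighborhood. Once the quasi-homogeneous change of coordinates has trivialized the radial dependence, both bounds reduce to standard compactness arguments on the product of a compact parameter set with the compact $\varphi$-circle, so no technical difficulty beyond that appears. The argument also shows cleanly why the family \eqref{unif} fails to be uniformly isolated at $b_0=0$: there $h_b$ acquires a zero as $b\to 0$, violating the uniform lower bound on which the whole scheme rests.
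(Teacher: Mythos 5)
Your proposal is correct and follows essentially the same route as the paper: both pass to the generalized polar coordinates $x=r^p\Cs(\varphi)$, $y=r^q\Sn(\varphi)$, split off the quasi-homogeneous leading term, bound it below uniformly for $b$ in a compact neighborhood, and absorb the polynomial remainder for small $r$. The only cosmetic difference is that the paper derives its lower bound $L(b)$ from the factorization of $G^0_b$ into definite quadratic forms (inherited from the proof of Lemma~\ref{nnou}), whereas you get it directly from nonvanishing of $h_{b_0}$ on the compact $\varphi$-circle plus joint continuity in $(b,\varphi)$; both are sound.
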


\begin{proof}  Assume without loss of generality that $G^0_b\ge0.$
Let us write the function $G_b(x,y)$ using the so-called generalized polar
coordinates,
 \[x=\rho^p
\Cs(\varphi),\,y=\rho^q\Sn(\varphi),\quad \mbox{for}\quad  \rho\in\R^+.\] Then
\begin{align}\label{bbbbb}
G_b(x,y)&=G_b(\rho^p \Cs(\varphi),\rho^q\Sn(\varphi))\nonumber\\
&=\rho^mG_b^0(\Cs(\varphi),\Sn(\varphi))+\rho^{m+1}G_b^1(\Cs(\varphi),\Sn(\varphi),\rho).
\end{align}
Using the same notation that in the proof of Lemma~\ref{nnou}, with the obvious
modifications, we know  from~\eqref{prodd} that
\begin{equation*}
G^{0}_b(\Cs(\varphi),\Sn(\varphi) )=\prod_{i=1}^{m_0/2} (A_i(b) \Cs^{2q}(\varphi)+
B_i(b) \Cs^q(\varphi) \Sn^p(\varphi) + C_i(b) \Sn^{2p}(\varphi)),
\end{equation*}
with all $B_i^2(b)-4A_i(b)C_i(b)<0.$ Therefore, it is not difficult to prove that
there exists two positive continuous functions, $L(b)$ and $U(b)$ such that
\[
0<L(b)\le G_b^0(\Cs(\varphi),\Sn(\varphi))\le U(b),
\]
due to the periodicity of the Lyapunov functions and the
discriminant conditions. Dividing the expression~\eqref{bbbbb} by
$\rho^m$ we obtain that the points of
$\{G_b(x,y)=(0,0)\}\setminus\{(0,0\}$ are given by
\begin{equation}\label{nh}
G_b^0(\Cs(\varphi),\Sn(\varphi))+\rho\, G_b^1(\Cs(\varphi),\Sn(\varphi),\rho)=0.
\end{equation}
Fix a compact neighborhood of $b,$ say $\mathcal{V}\subset I.$ Set
$L=\min_{x\in\mathcal{V}} L(b).$ Then there exists $\delta>0$ such that for any
$||(x,y)||\le \delta$ and any $b\in \mathcal{V}$,
 \[
 |\rho\, G_b^1(\Cs(\varphi),\Sn(\varphi),\rho)|<L/2.\]
 Therefore~\eqref{nh} never holds in this region and
\[ \{(x,y)\in\R^2\,:\, G_b(x,y)=0\}\cap \{(x,y)\in\R^2\,: \, ||(x,y)||<\delta
\}=(0,0),
\]
for all $b\in\mathcal{V}$, as we wanted to prove.
\end{proof}

Notice that, the fact that for all $b\in\R$, the origin
of~\eqref{unif} is isolated simply follows plotting the zero level
set of $G_b$. Alternatively, we can apply Lemma~\ref{nnou} with
$p=1,q=1$ and $m=2$ to prove that the origin is isolated when
$b\ne0$ and with $p=q=1$ and $m=4$ when $b=0.$ In any case,
Proposition~\ref{nnnou} can not be used.

\subsection{The method for controlling the sign}

\begin{proposition}\label{lem-inf}
Let $F_b(x,y)$ be a family of real polynomials  depending also
polynomially on a real parameter $b$ and let $\Omega\subset\R^2$ be
an open connected subset having a boundary $\partial \Omega$ formed
by finitely many algebraic curves. Suppose that there exists an open
interval $I\subset\R$ such that:
\begin{enumerate}[(i)]
\item For some $b_0\in I$, $F_{b_0}(x,y)>0$ on $\Omega\subset\R^2$.

\item For all $b\in I,$  $\triangle^2(F_b)\neq 0.$

\item For all $b\in I,$ all points of $ F_b=0$ at infinity which are also in
$\Omega$ do not depend on $b$ and  are uniformly isolated.

\item  For all $b \in I$, $\{F_b=0\}\cap \partial \Omega=\emptyset.$
\end{enumerate}
Then for all $b\in I$, $F_b(x,y)>0$ on $\Omega.$
\end{proposition}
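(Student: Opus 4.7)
The plan is to mimic the one–variable argument of Lemma~\ref{ll}, now tracking what can happen in the projective closure $\bar\Omega \subset \mathbb{RP}^2$. The idea is that as $b$ moves continuously away from $b_0$, a real zero of $F_b$ cannot enter $\Omega$ without violating one of (ii)--(iv). Concretely, I would argue by contradiction and set
\[
b^{*}=\sup\bigl\{\,b\ge b_0 \,:\, F_{b'}>0 \text{ on }\Omega \text{ for every } b'\in[b_0,b]\cap I\,\bigr\},
\]
assuming $b^{*}\in I$ (the case $b<b_0$ is symmetric). By definition there exist $b_n\searrow b^{*}$ and points $(x_n,y_n)\in\Omega$ with $F_{b_n}(x_n,y_n)\le 0$. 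Since $F_{b_0}(x_n,y_n)>0$, the intermediate value theorem applied to the continuous map $b\mapsto F_b(x_n,y_n)$ yields $\tilde b_n\in[b^{*},b_n]$ with $F_{\tilde b_n}(x_n,y_n)=0$, and $\tilde b_n\to b^{*}$. Using compactness of $\bar\Omega$ in $\mathbb{RP}^2$, I extract a subsequence with $(x_n,y_n)\to(x^{*},y^{*})\in\bar\Omega$ and split into three cases by location of $(x^{*},y^{*})$.

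Suppose first that $(x^{*},y^{*})$ is a finite interior point of $\Omega$. Continuity gives $F_{b^{*}}(x^{*},y^{*})=0$, and since $F_{b'}>0$ on $\Omega$ for all $b'\in[b_0,b^{*})$, passing to the limit yields $F_{b^{*}}\ge 0$ on $\Omega$. Hence $(x^{*},y^{*})$ is an interior local minimum of $F_{b^{*}}$ with value zero, so $\nabla F_{b^{*}}(x^{*},y^{*})=0$; together with $F_{b^{*}}(x^{*},y^{*})=0$, this makes $(x^{*},y^{*})$ a singular point of $\{F_{b^{*}}=0\}$. By Proposition~\ref{ddisc} and Corollary~\ref{cccc} this forces the polynomial $\triangle^2(F_b)$ to vanish at $b=b^{*}$, contradicting (ii). If instead $(x^{*},y^{*})$ is a finite point of $\partial\Omega$, then $(x^{*},y^{*})\in\{F_{b^{*}}=0\}\cap\partial\Omega$, contradicting (iv) directly.

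The remaining case, where $(x^{*},y^{*})$ is at infinity in $\bar\Omega$, is the main obstacle. Passing to a suitable affine chart of $\mathbb{RP}^2$ around $(x^{*},y^{*})$ and letting $n\to\infty$ in the projectivized equation $\widetilde{F_{\tilde b_n}}=0$ along $(x_n,y_n)\to(x^{*},y^{*})$ shows, by continuity, that $(x^{*},y^{*})$ is a point at infinity of $\{F_{b^{*}}=0\}$ lying in $\bar\Omega$. By hypothesis (iii) it is uniformly isolated, so there exist neighborhoods $\mathcal{V}$ of $b^{*}$ in $I$ and $\mathcal{W}$ of $(x^{*},y^{*})$ in $\mathbb{RP}^2$ such that the zero set of $\widetilde{F_b}$ in $\mathcal{W}$ reduces to $\{(x^{*},y^{*})\}$ for every $b\in\mathcal{V}$. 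But for $n$ large we have $\tilde b_n\in\mathcal{V}$ and $(x_n,y_n)\in\mathcal{W}$ is a finite point distinct from $(x^{*},y^{*})$ lying on $\widetilde{F_{\tilde b_n}}=0$, a contradiction. The delicate technical point is the projective interpretation of (iii) through the chart transition; everything else is a routine continuity/continuation argument modeled on Lemma~\ref{ll}.
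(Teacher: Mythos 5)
Your proof is correct and follows essentially the same route as the paper's: a supremum/continuation argument in $b$ in which an interior limit zero forces a singular point of $\{F_{b^*}=0\}$ (hence $\triangle^2(F_{b^*})=0$, contradicting (ii)), a finite boundary limit point contradicts (iv), and a limit point at infinity contradicts the uniform isolation in (iii). The only cosmetic difference is that you extract exact zeros $F_{\tilde b_n}(x_n,y_n)=0$ via the intermediate value theorem in the parameter and handle everything through one compactness trichotomy in $\mathbb{RP}^2$, whereas the paper first splits into the cases $F_{\bar b}\ge 0$ and $F_{\bar b}>0$ on $\Omega$ and runs the sequence argument (bounded versus unbounded) only in the second case.
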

\begin{proof}  Consider the following set
\[
J:=\{ b\in I\,:\, F_{b}(x,y)>0\quad\mbox{for all}\quad (x,y)\in\Omega \}.
\]
By hypothesis (i), $J\ne\emptyset$ because $b_0\in J$. Consider now $\bar b=\sup
J$. We  want to prove that $\bar b\in \partial I.$ If this is true, arguing
similarly with $\inf J$ the result will follow.

We will prove the result by contradiction. So assume that $\bar b\in I.$

Notice that if  $F_{\bar b}(x,y)$  takes positive and negative values on $\Omega$,
by continuity this would happen for any $b$ near enough to $\bar b$. This is in
contradiction with the fact that $\bar b$ is the supremum of $J.$ Therefore,
either $F_{\bar b}(x,y)\ge0$ or $F_{\bar b}(x,y)>0$ in~$\Omega$.

In the first case it is clear that a point $(x_0,y_0)$ where
$F_{\bar b}(x_0,y_0)=0$ has to be a singular point of the curve
$\{F_{\bar b}(x,y)=0\}$. Therefore, by Corollary~\ref{cccc},
$\triangle^2(F_{\bar b})=0$ which is in contradiction with (ii).

In the second case it should exist a sequence of real numbers $\{b_n\}$, with
$b_n\downarrow \bar b$, and a sequence of points $\{(x_n,y_n)\}\in\Omega$ such
that $\lim_{n\to\infty} F_{b_n}(x_n,y_n)=0$.

If the sequence is bounded, renaming it if necessary, we arrive to a convergent
sequence. Call $(\bar x,\bar y)\in \overline \Omega$ its limit, where $\overline
\Omega$ denotes the adherence of $\Omega$. Then $F_{\bar b}(\bar x,\bar y)=0$. By
hypothesis (iv), the point $(\bar x,\bar y)\not\in\partial \Omega$ and we also
know that $F_{\bar b}(x,y)>0$ on $\Omega$. Therefore we have a contradiction and
the sequence $\{(x_n,y_n)\}$ must be unbounded.

This unbounded sequence  can be considered  in the projective space
$\mathbb{RP}^2$. Then this sequence must converge to a point ${\bf p}$ of $F_{\bar
b}(x,y)=0$ at infinity, which is also in~$\mathcal U.$ Since by hypothesis~(iii)
this point is uniformly isolated, there exists a neighborhood $\mathcal V$ of
${\bar b}$ and an open neighborhood $\mathcal W$ of ${\bf p}$ such that  this
point is the only real point in $\mathbb{RP}^2$ of the homogenization of
$F_{b}(x,y)=0$. This is in contradiction with the fact $ F_{b_n}(x_n,y_n)=0$ for
all $n$, and the result follows.
\end{proof}

\subsection{Control of the sign of~\eqref{Mb651}}\label{cas2}

In this subsection we will prove by using Proposition~\ref{lem-inf},
that for $b\in(0,0.6512)$, the function $M_b$ given in~\eqref{Mb651}
is positive on $\Omega=\R^2$.

To check hypothesis~(i), we  prove  that $M_{1/2}>0$ for all $\R^2.$
For this value,
$${\textstyle M_{1/2}=\frac{15}{2}\,{x}^{4}{y}^{2}-{\frac{21}{4}}\,{x}^{3}{y}^{3}
+\frac{21}{2}\,{x}^{2}{y}^ {4}-{\frac {123}{16}}\,{x}^{2}{y}^{2}+{\frac
{21}{16}}\,x {y}^{3}+\frac{5}{2}\,{x}^{4}-{\frac {7}{16}}\,{x}^{2}+{\frac
{15}{64}}\,{y}^{2}+{\frac {13} {64}}.}
$$
We think $M_{1/2}$ as a polynomial in $x$  and $y$ as a parameter
and we apply Lemma~\ref{ll}. If $y=0$ then $M_{1/2}$ reduces to the
polynomial $(5/2)x^4-(7/16)x^2+13/64$ which is positive on $\R$.
Now, we compute $\triangle_x(M_{1/2})$ and we obtain a polynomial in
the variable $y$ of degree 20. By using the Sturm method it is easy
to see  that it does not have real roots. Moreover, the coefficient
of $x^4$ is $5(3y^2+1)/2>0.$  Therefore,  $M_{1/2}>0$ on $\R^2$, as
we wanted to see.

\smallskip

To check hypothesis (ii) we compute the double discriminant of $M_b$
and we obtain that
 $\triangle_{x,y}^2(M_b)$ is a
polynomial in $b$ of degree 1028, of the following form
\begin{align*}
\triangle_{x,y}^2
(M_b)=&b^{320}(b^2-2)^{40}(3b^2-2)^5(3b^2-4)(2b^6-4b^4-3b^2+2)\times
 \\&\times(b^6-2b^4-3b^2+2)
(P_2(b^2))^8(P_{6}(b^2))^4(P_{32}(b^2))^2(P_{33}(b^2))^6,
\end{align*}
where  $P_i$  are polynomials of degree $i$ with rational
coefficients. By using the Sturm method we localize  the real roots
of each factor of $\triangle_{x,y}^2(M_b)$ and we obtain that in the
interval $(0,0.6512)$ none of them has real roots. In fact
$P_{32}(b^2)$ has a root in $(0.6513,0.6514)$ and that is the reason
for which we can not increase more the value of $b.$ Therefore
$\triangle_{x,y}^2(M_b)\neq 0$ for all $b\in(0,0.6512)$.

Finally we have to check hypothesis (iii). Notice that in this case $\partial
\Omega=\emptyset$ and so (iv) follows directly.

The zeros at infinity are given by the directions
\[
\mathcal{H}_6(M_b)=6x^2y^2[(2-3b^2)x^2-2b^2(2-b^2)xy+(2-b^2)y^2]=0.
\]
For $|b|<0.7275$ it has only the non-trivial solutions $x=0$ and
$y=0$. The homogenization of $M_b$ is
\begin{equation}\label{MbH651}
\begin{array}{lll}
\widetilde{M}_b&=&6[(2-3b^2)x^4y^2-2b^2(2-b^2)x^3y^3+(2-b^2)x^2y^4]
+2(2-3b^2)x^4z^2\\
&&-3b^2(14-15b^2)x^2y^2z^2+12b^4(2-b^2)xy^3z^2-b^2(4-9b^2)x^2z^4\\
&&+3b^4(2-3b^2)y^2z^4+b^4(4-3b^2)z^6,
\end{array}
\end{equation}
and hypothesis (iii) is equivalent to prove that $(0,0)$ is an
uniformly isolated singularity for
$\widetilde{M}^1_{b}(x,z)=\widetilde{M}_b(x,1,z)$ and that $(0,0)$
is also an uniformly isolated singularity for
$\widetilde{M}^2_{b}(y,z)=\widetilde{M}_b(1,y,z)$.

First we prove this result  for $\widetilde{M}_b^1(x,z)$. From
\eqref{MbH651},
$$
\begin{array}{lll}
\widetilde{M}^1_{b}(x,z)&=&6[(2-3b^2)x^4-2b^2(2-b^2)x^3+(2-b^2)x^2]
+2(2-3b^2)x^4z^2\\
&&-3b^2(14-15b^2)x^2z^2+12b^4(2-b^2)xz^2-b^2(4-9b^2)x^2z^4\\
&&+3b^4(2-3b^2)z^4+b^4(4-3b^2)z^6.
\end{array}
$$
Hence,
$$\widetilde{M}^1_{b}(\varepsilon^2X,\varepsilon Z)=\Big(6(2-b^2)X^2 +12b^4(2-b^2)XZ^2
 +3b^4(2-3b^2)Z^4\Big) \varepsilon^4+O(\varepsilon^5).
$$
The discriminant with respect to $X$ of the homogeneous polynomial
$T(X,W):=6(2-b^2)X^2 +12b^4(2-b^2)XW  +3b^4(2-3b^2)W^2,$ where
$W=Z^2,$ is
\[
\triangle_X(T)=72W^2b^4(b^2-2)(2b^6-4b^4-3b^2+2).
\]
Since its smallest positive root is greater than $0.673$ it holds for
$b\in(0,673)$  that  $T(X,W)=0$ if and only if $(X,W)=(0,0).$
 Therefore by Proposition~\ref{nnnou} the point $(0,0)$ is an uniformly
 isolated point of the
curve~$\widetilde{M}^1_{b}(x,z)=0,$ for these values of $b$.

For the other point, since
$$
\begin{array}{lll}
\widetilde{M}^2_{b}(y,z)&=&6[(2-b^2)y^4-2b^2(2-b^2)y^3+(2-3b^2)y^2]
+2(2-3b^2)z^2\\
&&-3b^2(14-15b^2)y^2z^2+12b^4(2-b^2)y^3z^2-b^2(4-9b^2)z^4\\
&&+3b^4(2-3b^2)y^2z^4+b^4(4-3b^2)z^6,
\end{array}
$$
we have that
$$\widetilde{M}^2_{b}(\varepsilon Y,\varepsilon Z)=2(2-3b^2)\Big(3Y^2+Z^2\Big) \varepsilon^2+O(\varepsilon^3),
$$
and the result follows for $b\in(0,\sqrt{2/3})\approx(0,0.816),$ by applying again
the same proposition.

So, we have shown that for $b\in(0,0.6512)$ all the hypotheses of
the Proposition~\ref{lem-inf} hold. Therefore  we have proved that
for $b\in(0,0.651]$, $M_b(x,y)>0$ for all $(x,y)\in\R^2$.

\subsection{Control of the sign of~\eqref{ap1}}\label{cas3}

 The
numerator of the function $M_b$ given in \eqref{ap1} is a polynomial of the following form
\begin{equation}
\begin{array}{ll}
N_b(x,y)=f_0(x,b)+f_1(x,b)y+f_2(x,b)y^2+f_3(x,b)y^3+f_4(x,b)y^4,
\end{array}\label{Mb817}
\end{equation}
where {\small
$$
\begin{array}{lll}
f_0(x,b)&=&
90b^{36}x^{10}-15b^{18}(6b^{20}-5)x^8+15b^{18}(24b^4-59b^2+24)x^6\\&&
-(378b^{24}-810b^{22}+360b^{20}-300b^4+675b^2-300)x^4
\\&&-15b^2(18b^{22}-24b^{20}+21b^4-45b^2+20)x^2-75b^4(-4+3b^2),
\end{array}
$$
$$
\begin{array}{lll}
f_1(x,b)&=&180b^{36}x^7+12b^{18}(60b^{16}+50b^{14}+18b^{10}+25)x^5
-20b^{10}(36b^{12}\\&&-54b^{10}+54b^8-30b^6-25b^4-9)x^3-180b^{20}(3b^2-4)x,
\end{array}
$$
$$
\begin{array}{lll}
f_2(x,b)&=&270b^{36}x^{10}-45b^{18}(6b^{20}+2b^{18}-5)x^8+3b^{18}
(30b^{20}+120b^{16}\\&&+100b^{14}-90b^{12}+36b^{10}+360b^4-615b^2+335)x^6
-(360b^{36}\\&&+300b^{34}+108b^{30}+2214b^{24}
-3690b^{22}+3435b^{20}+360b^{18}
\\&&-300b^{16}-250b^{14}+225b^{12}-90b^{10}-900b^4+1350b^2-900)x^4
\\&&-b^2(468b^{22}-540b^{20}-1080b^{18}+300b^{16}+250b^{14}+90b^{10}
\\&&+1845b^4-3075b^2+2475)x^2-90b^4(4b^2-5),
\end{array}
$$
$$
\begin{array}{lll}
f_3(x,b)&=&-180b^{20}(b^{10}-3)x^7+30b^2(6b^{34}+6b^{30}-24b^{22}+18b^{20}
-72b^{18}\\&&-5b^{10}+15)x^5+30b^2(24b^{24}-36b^{22}+72b^{20}+10b^{16}+5b^{12}
\\&&-20b^4+15b^2-60)x^3-20b^4(36b^{18}-54b^{16}+54b^{14}+30b^{12}
\\&&+25b^{10}+9b^6-30b^4+45b^2-90)x,
\end{array}
$$
$$
\begin{array}{lll}
f_4(x,b)&=&90b^{36}x^8-3b^{18}(30b^{20}+120b^{16}+100b^{14}+36b^{10}-25)x^6
\\&&+b^{10}(360b^{26}+300b^{24}+198b^{20}+360b^{12}-615b^{10}+720b^8
\\&&-300b^6-250b^4-90)x^4+(-738b^{24}+1080b^{22}-1080b^{20}
\\&&+300b^{18}+250b^{16}+315b^{12}+300b^4-450b^2+900)x^2+15b^6.
\end{array}
$$}

We will prove that $N_b\ge0$  on $\Omega:=\{(x,y)\,:\, xy+1>0\}$ for all
$b\in(0,0.817]$ and if it vanishes this only happens at some isolated points. We
will use again Proposition~\ref{lem-inf}. Notice that $\partial \Omega
=\{(x,y)\,:\, xy+1=0\}.$

\begin{figure}[h]
\centering\epsfig{file=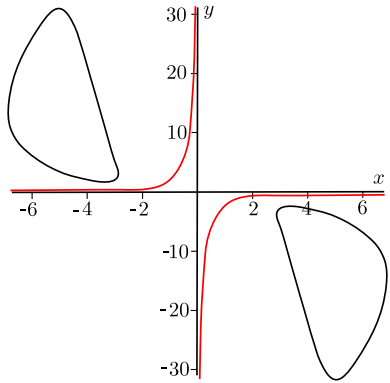,width=5cm} \caption{Curves
$N_b=0$ and $xy+1=0$ with $b=0.817$.} \label{M817Hip}
\end{figure}

It is not difficult to verify that $\{N_b(x,y)=0\}\cap\{xy+1=0\}=\emptyset$ for
$b\in(0,0.8171)$, see Figure~\ref{M817Hip}. It suffices to see that for these
values of $b$, and $x\ne0,$ the one variable function $N_b(x,1/x)$, never
vanishes. We skip the details. Therefore hypothesis (iv) is satisfied.

\smallskip
For proving that hypothesis (ii) of Proposition~\ref{lem-inf} holds  we compute
the double discriminant $\triangle_{y,x}^2(N_b)$. It is an even polynomial in $b$,
of degree 21852, of the following form
\begin{equation}{\small\label{DDMb}
b^{7566}(3b^2-4)(159b^4-380b^2+225)^2(P_{71}(b^2))^2(P_{386}(b^2))^4(P_{587}(b^2))^6
(P_{965}(b^2))^2,}
\end{equation}
where  $P_i$ are polynomials of degree $i$ with rational coefficients.
 By using the Sturm method it is
easy to see that its first 4 factors  do not have  real roots in
$(0,0.8171)$. We replace $b^2=t$ in the next three polynomials  to
reduce their degrees and we obtain $\mathcal{P}_1(t):=P_{386}(t)$,
$\mathcal{P}_2(t):=P_{587}(t)$, and $\mathcal{P}_3(t):=P_{965}(t).$
It suffices to study their number of  real roots in $(0,0.6678]$,
because $0.6678>(0.8171)^2$. Our computers have no enough capacity
to get their Sturm sequences. Therefore  we will use the Descartes
approach as it is explained in Appendix I.

We consider first the polynomial $\mathcal{P}_1(t)$. Its normalized version
$N^{0.68}_0(\mathcal{P}_1)$ has all their coefficients positive. Therefore
$\mathcal{P}_1(t)$ has no real roots in $(0,0.68)$ as we wanted to see.

Applying the Descartes rule to the normalized versions of
$\mathcal{P}_2(t)$, $ N^{0.561}_0(\mathcal{P}_2),$
$N_{0.561}^{0.811}(\mathcal{P}_2)$ and
$N_{0.562}^{0.812}(\mathcal{P}_2), $ we obtain that the number of
zeros in the intervals $(0,0.561),$ $(0.561,0.811)$ and
$(0.562,0.812)$ is 0, 1 and 0 respectively. That is, there is only
one root of $\mathcal{P}_2(t)$  in $(0,0.812)$, it is simple  and it
belongs to $(0.561,0.562)$. Refining this interval with Bolzano
Theorem we  prove that the root is in the interval
$(0.5617,0.5618)$.

Finally to study $\mathcal{P}_3(t)$ we consider $N^{11/20}_0(\mathcal{P}_3),$
$N^{7/12}_{11/20}(\mathcal{P}_3)$ and $N^{52/75}_{7/12}(\mathcal{P}_3).$ By
Descartes rule  we obtain that the number of zeros of $\mathcal{P}_3$ in the
corresponding intervals is 0, 1 and 1 or 3, respectively. By Bolzano Theorem we
can localize more precisely these zeros and prove that in the last interval there
are exactly 3 zeros. So we have proved that the polynomial $\mathcal{P}_3$ has
exactly 4 zeros in the interval $(0,{52}/{75})\approx(0,0.693)$, and each one of
them is contained in  one of the following intervals
\[
\left({0.5614},{0.5615}\right),\, \left({0.6678},{0.6679}\right),\,
\left({0.6690},{0.6700}\right),\, \left({0.6870},{0.6880}\right).
\]
In brief, for $t\in(0,0.6678]$ the double discriminant
$\triangle_{y,x}(N_b)$ only vanishes at two points $t=t_1$ and
$t=t_2$ with $t_1\in(0.5614,0.5615)$ and $t_2\in(0.5617,0.5618)$.
Therefore we are under the hypothesis (ii) of
Proposition~\ref{lem-inf} for $b$ belonging to each of  the
intervals $(0,b_1)$, $(b_1,b_2)$ and $(b_2,0.8171),$ where \[
b_1:=\sqrt{t_1}\approx 0.749301 ,\quad
b_2:=\sqrt{t_2}\approx0.749478.
\]

To ensure that on each interval we are under the hypotheses (i) of the proposition
we prove  that   $N_b$ does not vanish on $\Omega$  for one value of $b$ in each
of the above three intervals. We take
\[
\frac12\in(0,b_1),\quad \frac{7494}{10000}\in(b_1,b_2),\quad \mbox{and}\quad
\frac34\in(b_2,0.8171).
\]
We study with detail the case  $b=1/2.$ The other two cases can be treated
similarly and we skip the details. So we have to study on $\Omega$ the sign of the
function {\footnotesize
$$
\begin{array}{lll}
N_{1/2}&=& {\frac {135}{34359738368}}{x}^{10}{y}^{2}+{\frac
{45}{34359738368}}{x}^{8}{y}^{4}+{\frac {45}{34359738368}}{x}^{10}+{\frac
{117964485 }{137438953472}}{x}^{8}{y}^{2}\\\\&&+{\frac
{138195}{268435456}}{x}^{7}{ y}^{3}+{\frac
{39253779}{137438953472}}{x}^{6}{y}^{4}+{\frac {
39321555}{137438953472}}{x}^{8}+{\frac {45}{17179869184}}{x}^{7}y\\\\&&+ {\frac
{320504301}{137438953472}}{x}^{6}{y}^{2}+{\frac {
1932072223485}{17179869184}}{x}^{5}{y}^{3}-{\frac {906074381}{
8589934592}}{x}^{4}{y}^{4}+{\frac {645}{1048576}}{x}^{6}\\\\&&+{\frac {
1229859}{1073741824}}{x}^{5}y+{\frac {5315442024413}{8589934592}}{
x}^{4}{y}^{2}-{\frac {1808748465}{4194304}}{x}^{3}{y}^{3}+{\frac {
6763995071}{8388608}}{x}^{2}{y}^{4}\\\\&&+{\frac {1258289751}{8388608}}{
x}^{4}+{\frac {55625}{262144}}{x}^{3}y-{\frac {1910154937}{4194304}}
{x}^{2}{y}^{2}+{\frac {26361865}{262144}}x{y}^{3}+{\frac {15}{64}}
{y}^{4}\\\\&&-{\frac {316538295}{8388608}}{x}^{2}+{\frac {585}{1048576}} xy+{\frac
{45}{2}}{y}^{2}+{\frac {975}{64}}.
\end{array}
$$}
\!\!We consider $N_{1/2}$ as a polynomial in $x$ with coefficients in $\R[y]$ and
we apply Lemma~\ref{ll} with $\Omega_y=(-1/y,\infty)$ when $y>0$ and
$\Omega_0=(-\infty,\infty)$. Notice that for the symmetry of the function there is
no need to study   the zone $y<0$ because $N_{1/2}(-x,-y)=N_{1/2}(x,y)$. We
introduce the following notation $S_{y}(x):=N_{1/2}(x,y)$. We prove the following
facts:

\begin{enumerate}[(i)]

\item If we write $S_y(x)=\sum_{i=1}^10 s_{i}(y)x^{i},$ then
$s_{10}(y)=k(1+3y^2)$ for some $k\in\Q^+$. Therefore $s_{10}(y)>0$ for all
$y\in\R.$

\item If $y=0$ then $S_0(x)$ is an even polynomial of degree 10 and it is easy to
see that  $S_0(x)>0$ over $\R$.

\item We already know that $\{S_y(x)=0\}\cap \partial
\Omega=\emptyset.$

\item Some computations give that
\[
\triangle_x (S_y)=P_{35}(y^2),
\]
where $P_{35}$ is a polynomial of degree 35. Moreover, using once more the Sturm
method, we get that $P_{35}(y^2)$ has only two positive roots $0<y_1<y_2$, with
$y_1\approx 0.588423$ and $y_2\approx 6065.2946$. From this result it is easy to
prove that:
\begin{enumerate}[(a)]
\item If $y\in[0,y_1)\cup(y_2,\infty)$, then $S_y(x)>0.$
\item If $y\in(y_1,y_2)$, then $S_y(x)$ has only two real roots, say $x_1(y)<x_2(y)$, and
none of them belongs to the interval $(-1/y,\infty)$. So  $S_y(x)>0$
on $(-1/y,\infty)$.
\item If $y\in\{y_1,y_2\}$, then $S_y(x)$ has only a real root,   $x_1(y)$,
which is a double root and $x_1(y)\not\in(-1/y,\infty)$. So, again  $S_y(x)>0$ on
$(-1/y,\infty)$.

\end{enumerate}

\end{enumerate}

Thus, by  Lemma~\ref{ll}, the function $N_{1/2}$ is positive on $(x,y)\in\Omega$,
as we wanted to see. In fact, its level curves are like the ones showed in
Figure~\ref{M817Hip}. The straight lines $y=y_1$ and $y=y_2$ correspond to the
lower and upper tangents to the oval contained in the second quadrant.

To be under all the hypotheses of Proposition~\ref{lem-inf} it only
remains to study the function $\widetilde{N}_b$ at infinity. We
denote by $\widetilde{N}_b(x,y,z)$ its homogenization in
$\mathbb{RP}^2$ and by $\widetilde{N}^1_{b}(x,z)$ and
$\widetilde{N}^2_{b}(y,z)$ the expressions of the function
$\widetilde{N}_b$ in the planes $\{(x,z)\}$ and $\{(y,z)\}$,
respectively. Since $
\mathcal{H}_{12}({N}_b)=90b^{36}x^8y^2[3x^2+y^2], $ the only
non-trivial solutions of $\mathcal{H}_{12}({N}_b)=0$  are  $x=0$ and
$y=0$. Hence these directions give rise to two  points of $N_b$ at
infinity which are also on the region~$\Omega$. They correspond to
the points $(0,0)$ of the algebraic curves
$\widetilde{N}^1_{b}(x,z)=0$ and  $\widetilde{N}^2_{b}(y,z)=0$. We
have to prove that both points are uniformly isolated.

\smallskip

Similarly that in the previous subsection, we write
{\footnotesize\begin{align*}
 \widetilde{N}^1_{b}&(\varepsilon X,\varepsilon
Z)=\Big(90b^{36}X^8-3b^{18}(30b^{20}+120b^{16}+100b^{14}+36b^{10}-25)X^6Z^2\\
&+b^{10}(360b^{26}+300b^{24}+198b^{20}+360b^{12}-615b^{10}+720b^8-300b^6
-250b^4-90)X^4Z^4\\
&+(-738b^{24}+1080b^{22}-1080b^{20}+300b^{18}+250b^{16}
+315b^{12}+300b^4-450b^2+900)X^2Z^6\\
&+15b^6Z^8 \Big) \varepsilon^8+O(\varepsilon^9)
\end{align*}}
and
\[\widetilde{N}^2_{b}(\varepsilon
Y,\varepsilon Z)=90b^{36} (3Y^2+Z^2)\varepsilon^2+O(\varepsilon^3).\]

By Proposition~\ref{nnnou}, for  the second algebraic curve it is
clear that for all $b>0$ $(0,0)$ is an isolated point.

For studying the first one we denote by $R(X,Z)$ the homogenous polynomial
accompanying  $\varepsilon^8$ and we obtain that
\[
\triangle_X(R(X,Z))=Z^{56}b^{150}(P_{71}(b^2))^2,
\]
for some polynomial $P_{71}$ of degree $71$ and integer
coefficients. Since the smallest positive root of this  polynomial
is greater that $0.92$ we can easily prove that for $b<0.92,$
$R(X,Z)=0$ if and only if $X=Z=0.$ Therefore we can use again
Proposition~\ref{nnnou} and  prove that $(0,0)$ is an uniformly
isolated point of the curve for these values of $b$.

So, if we write
\[
(0,0.8171)=(0,b_1)\cup\{b_1\}\cup(b_1,b_2)\cup\{b_2\}\cup(b_2,0.8171),
\]
we can apply Proposition~\ref{lem-inf} to each one of the open
intervals to prove that for $b\in(0,0.817]\setminus\{b_1,b_2\}$ it
holds that $N_b(x,y)>0$ for all $(x,y)$ in $\Omega$. By continuity,
for the two values  $b\in\{b_1,b_2\}$, we obtain that
$N_b(x,y)\ge0$. Since $\triangle_y(N_b)\not\equiv0$ either it is
always positive or it vanishes only at some isolated points, as we
wanted to prove.

It can be seen that for $b \gtrsim \hat b\approx 0.81722$,
$N_b(x,y)$  changes sign on $\Omega$ because there appears one oval
in the set $\{N_b(x,y)=0\}$. The value $\hat b^2\approx 0.6678492$
corresponds to the root of $\mathcal{P}_3$ in the interval
$\left({0.6678},{0.6679}\right)$ that has appeared in the proof as a
root of the double discriminant.

\subsection*{Acknowledgements}
The first two authors are partially supported by a MCYT/FEDER grant number
MTM2008-03437 and by a CIRIT grant number 2009SGR 410.

\end{document}